\pgfplotsset{width=\textwidth,compat=1.9}
\newtheorem{theorem}{Theorem}[section]
\newtheorem{lemma}[theorem]{Lemma}
\newtheorem{proposition}[theorem]{Proposition}
\newtheorem{corollary}[theorem]{Corollary}
\theoremstyle{definition}
\newtheorem{definition}[theorem]{Definition}
\theoremstyle{remark}
\newtheorem{remark}[theorem]{Remark}
\numberwithin{equation}{section}
\DeclarePairedDelimiter{\norm}{\lVert}{\rVert}	
\DeclarePairedDelimiter{\abs}{\lvert}{\rvert}	
\newcommand{\R}{\mathbb{R}}
\newcommand{\C}{\mathbb{C}}
\newcommand{\dprod}{\displaystyle\prod}
\renewcommand{\vec}[1]{\boldsymbol{#1}}		
\newcommand{\barvec}[1]{\bar{\vec{#1}}}
\newcommand{\kryl}{\mathcal{P}}		
\newcommand{\rat}{\mathcal{Q}}		
\newcommand{\poly}{\Pi}		
\newcommand{\gmf}{GMF}
\newcommand{\normal}{\mathcal{N}}	
\DeclareMathOperator{\rank}{rank} 	
\DeclareMathOperator{\triu}{triu} 	
\DeclareMathOperator{\tril}{tril} 	
\DeclareMathOperator{\diag}{diag}
\DeclareMathOperator{\vspan}{span} 	
\DeclareMathOperator{\Real}{Re} 	
\newcommand\bigzero{\makebox(0,0){\text{\huge0}}}
\newcommand\bignonz{\makebox(0,0){\text{\huge*}}}
\begin{document}

\title[Generalized matrix functions with rational Krylov methods]{Computation of generalized matrix functions with rational Krylov methods}


\author{Angelo A. Casulli}
\address{Scuola Normale Superiore, Piazza dei Cavalieri, 7, 56126 Pisa, Italy}
\curraddr{}
\email{angelo.casulli@sns.it}
\thanks{The work of Angelo A. Casulli was partially supported by the GNCS/INdAM project “Metodi low-rank per problemi di algebra lineare con struttura data-sparse”.}

\author{Igor Simunec}
\address{Scuola Normale Superiore, Piazza dei Cavalieri, 7, 56126 Pisa, Italy}
\curraddr{}
\email{igor.simunec@sns.it}
\thanks{}

\subjclass[2010]{Primary 65F60, 15A16}

\date{}

\dedicatory{}

\begin{abstract}
	We present a class of algorithms based on rational Krylov methods to compute the action of a generalized matrix function on a vector. These algorithms incorporate existing methods based on the Golub-Kahan bidiagonalization as a special case. By exploiting the quasiseparable structure of the projected matrices, we show that the basis vectors can be updated using a short recurrence, which can be seen as a generalization to the rational case of the Golub-Kahan bidiagonalization. We also prove error bounds that relate the error of these methods to uniform rational approximation.
	The effectiveness of the algorithms and the accuracy of the bounds is illustrated with numerical experiments.
\end{abstract}

\maketitle

\section{Introduction}
Generalized matrix functions (GMFs) are an extension of the notion of matrix functions based on the singular value decomposition (SVD) instead of the spectral decomposition. They were introduced for the first time in \cite{hawkins1973generalized}, with the purpose of extending the definition of matrix functions to rectangular matrices. Although the introduction of GMFs dates to the Seventies \cite{hawkins1973generalized}, they have become a more active area of research only in recent years. For instance, theoretical aspects of generalized matrix functions have been investigated in \cite{ACP16, BenziHuang19, Noferini17}, while efficient numerical methods have been developed in \cite{arrigo2016computation, aurentz2019stable}. For applications of generalized matrix functions, we direct the reader to~\cite{ACP16, arrigo2016computation, aurentz2019stable} and the references therein.

In many applications that involve standard matrix functions, it is only required to compute matrix-vector products of the form $f(A) \vec b$, where the matrix $A$ is usually large and sparse. In this case, the (expensive) computation of the whole matrix $f(A)$ can be bypassed by using methods that directly approximate the product $f(A) \vec b$, such as Krylov methods. These methods only require matrix-vector products and possibly the solution of shifted linear systems with the matrix $A$.

A similar situation arises when GMFs are involved: indeed, it is often required to compute the action of a generalized matrix function on a vector \cite{ArrigoBenzi16Edge, arrigo2016computation}, and hence it is preferable to use a method that avoids the computation of the whole GMF by means of an SVD.

This problem was recently investigated in \cite{arrigo2016computation, aurentz2019stable}, using methods based on the Golub-Kahan bidiagonalization in \cite{arrigo2016computation}, and Chebyshev polynomial interpolation in \cite{aurentz2019stable}.

In this paper, we propose a generalization of the method proposed in \cite{arrigo2016computation}, using the interpretation of the Golub-Kahan bidiagonalization in terms of Krylov subspaces.
Performing $k$ steps of the Golub-Kahan bidiagonalization of a matrix $A$ with starting vector $\vec b$ is equivalent to the simultaneous computation of orthonormal bases of the polynomial Krylov subspaces $\kryl_k(A^TA, \vec b)$ and $\kryl_k(AA^T, A \vec b)$. By replacing the polynomial Krylov subspaces with their rational counterparts, we obtain a rational Krylov method for the computation of the action of a GMF on a vector.

As can be expected by analogy with standard matrix functions, in the case of non-analytic functions and functions of low regularity these rational methods have a faster convergence than the method based on the Golub-Kahan bidiagonalization. However, their increased effectiveness comes at the cost of having to solve a linear system at each iteration, while the methods discussed in \cite{arrigo2016computation, aurentz2019stable} only require matrix-vector products.

The Golub-Kahan bidiagonalization of a matrix can be computed with a short recurrence, which relies on the fact that the projected matrix is bidiagonal. This structure is unfortunately not preserved in the rational case that we consider here. However, we are still able to construct a short recurrence to update the rational Krylov bases and the projected matrix, using the fact that the projected matrix is a quasiseparable matrix.

We also prove error bounds that link the error of the method from \cite{arrigo2016computation} based on the Golub-Kahan bidiagonalization and the rational methods introduced in this paper with, respectively, the error of uniform polynomial and rational approximation of the function $f$. These bounds are a direct generalization of the bounds for standard matrix functions, and they can be proved with the same techniques. Although the connection of GMFs to standard matrix functions is well-known, to the best of our knowledge these error bounds for the approximation of GMFs have never appeared in previous literature. 

The paper is organized as follows. In Section~\ref{sec:notation} we introduce the notation used throughout the paper.
In Section~\ref{sec:gen-mat-fun} we recall the definition of standard matrix functions and GMFs and we present some of their properties. In Section~\ref{sec-rat-kryl-methods} we briefly introduce the class of rational Krylov methods for standard matrix functions. The use of polynomial and rational Krylov methods in the context of generalized matrix functions is discussed in Section~\ref{sec:krylov-gmf}. Section~\ref{sec:error-bounds} is dedicated to the proof of the error bounds and related discussion. Some numerical experiments to compare the different methods and illustrate the error bounds are presented in Section~\ref{sec:numerical}, and Section~\ref{sec:conclusions} contains concluding remarks.

\section{Notation}
\label{sec:notation}

We denote by $\R^{m \times n}$ the space of $m \times n$ real matrices. We use bold letters for vectors, e.g.~$\vec v \in \R^n$. The entries of vector $\vec v$ are given by $v_1, \dots, v_n$, and the entries of matrix $A \in \R^{m \times n}$ are $a_{ij}$. We also use a MATLAB-like notation: $\diag(d_1, \dots, d_n)$ represents an $n \times n$ diagonal matrix with entries $d_1, \dots, d_n$ on the diagonal; for $i \le j$ and $h \le k$, we denote by $A(i:j, h:k)$ the submatrix of $A$ corresponding to row indices from $i$ to $j$ and column indices from $h$ to $k$.

We denote by $\triu(A)$ the upper triangular part of matrix $A$, and more generally by $\triu(A, k)$ the matrix with all zeroes below the $k$-th diagonal whose other entries coincide with those of $A$. Diagonals above the main diagonal are represented with a positive index, so that $\triu(A, 1)$ indicates the strictly upper triangular part of $A$. Similarly, $\tril(A)$ and $\tril(A,-1)$ denote the lower triangular and strictly lower triangular part of $A$, respectively.  We use the same notation also for rectangular matrices.
We denote by $A^+$ the Moore-Penrose pseudoinverse of a matrix $A$.

\section{Matrix functions}
\label{sec:gen-mat-fun}
The goal of this section is to define generalized matrix functions (GMFs) and introduce their main properties. We begin by recalling some basic concepts about standard matrix functions, and then we introduce \gmf s and some of their properties.

\subsection{Standard matrix functions}
The concept of matrix function is a natural way to generalize the evaluation of a function on a square matrix. For simplicity, we treat only the case of diagonalizable matrices. The general definitions and a thorough description of matrix functions can be found in the monograph~\cite{higham2008functions}.

Let $A$ be a $n\times n$ matrix. Assume that $A$ is diagonalizable, i.e.~$A=VDV^{-1}$, where $D=\diag(d_1,\dots,d_n)$. Given a function $f$ defined on the set $\{d_1,\dots,d_n\}$ the matrix function of $f$ applied on $A$ is defined as
$$ f(A)=Vf(D)V^{-1},$$
where $f(D)=\diag(f(d_1),\dots,f(d_n))$.

Equivalently, if $p(x)=\sum_{i=0}^{n-1}p_ix^i$ is a polynomial that interpolates $f$ in  $d_1,\dots,d_n$, the matrix function can be defined as

$$f(A)=p(A)=\sum_{i=0}^{n-1}p_iA^i.$$

The computation of a matrix function using the first of the two definitions requires knowledge of the eigenvalues of $A$. However, if $n$ is large, finding the eigenvalues of $A$ can be unfeasible.

A widely used technique to obtain a good approximation of $f(A)$ is to find a low-degree polynomial $q$ that approximates the interpolant polynomial $p$, and to approximate $f(A)$ with $q(A)$.
Polynomial Krylov methods, presented in Section \ref{sec-rat-kryl-methods}, use a similar strategy to approximate $f(A) \vec b$, by using $A$ and $\vec b$ to construct the polynomial $q$.

\subsection {Generalized matrix functions}
Generalized matrix function were first introduced in \cite{hawkins1973generalized}, with the purpose of extending the definition of matrix functions to rectangular matrices.
They are defined in a similar way with respect to the standard matrix functions, but the singular value decomposition is used instead of the diagonalization.

Let $A\in\R^{m\times n}$ and let $A=U\Sigma V^T$ be its SVD, where $U\in\R^{m\times m}$ and $V\in \R^{n\times n}$ are orthogonal and $\Sigma \in \R^{m\times n}$ is defined as
\begin{equation*}
	\Sigma_{i,j}=
	\begin{cases*}
		\sigma_i & if  $i=j\le r$\\
		0 &otherwise,
	\end{cases*}
\end{equation*}
where $r\le \min\{m,n\}$ is the rank of $A$ and $\sigma_1\ge\sigma_2\ge\dots\ge\sigma_r>0$ are the nonzero singular values of $A$.

Given a function $f$ defined on the set $\{\sigma_1,\dots,\sigma_r\}$ the generalized matrix function of $f$ applied on $A$ is defined as
\begin{equation*}
	f^{\diamond}(A)=Uf^{\diamond}(\Sigma) V^T,
\end{equation*}
where
\begin{equation*}
	f^\diamond(\Sigma)_{i,j}=
	\begin{cases*}
		f(\sigma_i) & if  $i=j\le r$\\
		0 &otherwise.
	\end{cases*}
\end{equation*}

Observe that a GMF can be expressed in terms of the compact SVD of the matrix $A$, that is $A=U_r\Sigma_rV_r^T$, where $U_r\in\R^{m\times r}$ and $V\in \R^{n\times r}$ have orthonormal columns, and $\Sigma_r=\diag(\sigma_1,\dots,\sigma_r)\in\R^{r\times r}$. In such case, we have
\begin{equation*}
	f^{\diamond}(A)=U_rf(\Sigma_r)V_r^T.
\end{equation*}

Since the definition of a GMF only depends on the values of $f$ on the nonzero singular values of $A$, we can always assume that $f$ is an odd function, and in particular that $f(0) = 0$.

\begin{remark}\label{remark-poly-gmf}\cite[Theorem~2.1]{aurentz2019stable}
	If $p$ is a polynomial that interpolates $f$ in $\sigma_1,\dots,\sigma_r$ it holds that $f^{\diamond}(A)=p^{\diamond}(A)$. Moreover, since $\sigma_i>0$ for $i=1,\dots,r$, we can always take $p$ as an odd polynomial, i.e.~$p(z) = q(z^2)z$ for some polynomial $q$.
\end{remark}

Next, we list some properties of GMFs that will be required in the following sections. A discussion of additional properties of generalized matrix functions can be found in~\cite{arrigo2016computation}.

\begin{lemma}\label{lemma:spd-equivalence}
	Let $S\in\R^{n\times n}$ be a symmetric matrix and let $f$ be defined on the singular values of $S$. If $S$ is positive definite, or $S$ is positive semidefinite and $f(0)=0$, it holds
	\begin{equation}
		f^\diamond(S)=f(S).
	\end{equation}
\end{lemma}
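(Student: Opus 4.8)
The plan is to reduce both sides to the spectral decomposition of $S$. Since $S$ is symmetric, I would write $S = Q\Lambda Q^T$ with $Q \in \R^{n\times n}$ orthogonal and $\Lambda = \diag(\lambda_1,\dots,\lambda_n)$. The singular values of $S$ are $\abs{\lambda_1},\dots,\abs{\lambda_n}$, so the assumption that $S$ is positive semidefinite forces $\lambda_i \ge 0$, hence $\abs{\lambda_i} = \lambda_i$, for all $i$. Conjugating by a suitable permutation matrix $P$ (which is orthogonal) we may assume $\lambda_1 \ge \dots \ge \lambda_n \ge 0$; setting $\widetilde Q = QP$ gives $S = \widetilde Q \Lambda \widetilde Q^T$, which is a singular value decomposition of $S$ with $U = V = \widetilde Q$ and $\Sigma = \Lambda$, since $\Lambda$ already has the structure required of $\Sigma$.

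Next I would evaluate both $f^\diamond(S)$ and $f(S)$ using this particular decomposition. Let $r = \rank(S)$, so that $\lambda_1 \ge \dots \ge \lambda_r > 0 = \lambda_{r+1} = \dots = \lambda_n$. By definition $f^\diamond(S) = \widetilde Q\, f^\diamond(\Lambda)\, \widetilde Q^T$ with $f^\diamond(\Lambda) = \diag(f(\lambda_1),\dots,f(\lambda_r),0,\dots,0)$, whereas the standard matrix function is $f(S) = \widetilde Q\, f(\Lambda)\, \widetilde Q^T$ with $f(\Lambda) = \diag(f(\lambda_1),\dots,f(\lambda_n))$. The diagonal matrices $f^\diamond(\Lambda)$ and $f(\Lambda)$ agree in the first $r$ entries and can differ only in positions $r+1,\dots,n$, where the former has zeros and the latter has $f(0)$. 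If $S$ is positive definite then $r = n$ and there are no such positions; if $S$ is merely positive semidefinite, the hypothesis $f(0)=0$ closes the gap. In either case $f^\diamond(\Lambda) = f(\Lambda)$, and therefore $f^\diamond(S) = f(S)$.

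The only genuine subtlety is the bookkeeping with the zero singular values: a GMF is defined through the values of $f$ on the \emph{nonzero} singular values only (equivalently, via the compact SVD $A = U_r \Sigma_r V_r^T$), while the standard matrix function of $S$ also involves $f(0)$ on $\ker S$, so one must check that these two contributions coincide — which is exactly where the assumption $f(0)=0$ enters, and why it may be dropped when $S$ is definite. I would also note in passing that $f^\diamond(S)$ is independent of the chosen SVD and $f(S)$ is independent of the chosen spectral decomposition, so it is legitimate to run the computation with the convenient decomposition $S = \widetilde Q\Lambda\widetilde Q^T$ constructed above. I expect no real obstacle beyond making this case distinction clean.
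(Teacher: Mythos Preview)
Your argument is correct. The paper actually states this lemma without proof, so there is nothing to compare against; the route you take---recognizing that for a positive semidefinite $S$ the spectral decomposition $S=\widetilde Q\Lambda\widetilde Q^T$ (with eigenvalues ordered) is simultaneously an SVD with $U=V=\widetilde Q$, and then matching the diagonal entries of $f^\diamond(\Lambda)$ and $f(\Lambda)$---is the natural and standard one.
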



\begin{proposition}
	\label{prop:gmf-polynomial-evaluation}
	Let $p$ be an odd polynomial, i.e. we can write $p(z) = q(z^2) z$ for some polynomial $q$. Then, for any matrix $A \in \R^{m \times n}$ it holds
	\begin{equation*}
		p^\diamond(A) = q(A A^T) A = A q(A^T A).
	\end{equation*}
\end{proposition}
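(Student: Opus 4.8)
The plan is to reduce everything to the compact SVD $A = U_r \Sigma_r V_r^T$ and compute both sides explicitly. First I would write $p(z) = q(z^2) z$ as given, and recall from the definition of the GMF that $p^\diamond(A) = U_r\, p(\Sigma_r)\, V_r^T$, where $p(\Sigma_r) = \diag(p(\sigma_1), \dots, p(\sigma_r))$. Since each $\sigma_i > 0$, we have $p(\sigma_i) = q(\sigma_i^2)\sigma_i$, so $p(\Sigma_r) = q(\Sigma_r^2)\,\Sigma_r = \Sigma_r\, q(\Sigma_r^2)$ (these diagonal matrices commute). Hence $p^\diamond(A) = U_r\, q(\Sigma_r^2)\, \Sigma_r\, V_r^T = U_r\, \Sigma_r\, q(\Sigma_r^2)\, V_r^T$.

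Next I would compute $q(AA^T) A$. From the compact SVD, $AA^T = U_r \Sigma_r V_r^T V_r \Sigma_r U_r^T = U_r \Sigma_r^2 U_r^T$, but some care is needed: $U_r$ does not have orthonormal rows when $r < m$, so $q(AA^T)$ is not simply $U_r q(\Sigma_r^2) U_r^T$ for an arbitrary polynomial $q$ — one must account for the constant term. Writing $q(x) = \sum_{j\ge 0} q_j x^j$, we get $(AA^T)^j = U_r \Sigma_r^{2j} U_r^T$ for $j \ge 1$, so $q(AA^T) = q_0 I_m + U_r\big(q(\Sigma_r^2) - q_0 I_r\big)U_r^T$. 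Multiplying on the right by $A = U_r \Sigma_r V_r^T$ and using $U_r^T U_r = I_r$, the constant terms combine to give $q(AA^T) A = U_r\, q(\Sigma_r^2)\, \Sigma_r\, V_r^T$, which matches $p^\diamond(A)$. The analogous computation with $A^T A = V_r \Sigma_r^2 V_r^T$ gives $A\, q(A^T A) = U_r\, \Sigma_r\, q(\Sigma_r^2)\, V_r^T$, again equal to $p^\diamond(A)$.

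The only subtlety — and the step I would be most careful about — is precisely this bookkeeping of the constant term of $q$ when passing through a non-square factor $U_r$ or $V_r$; once the $q_0 I$ terms are tracked correctly, the identity falls out immediately from $U_r^T U_r = V_r^T V_r = I_r$ and the commutativity of the diagonal matrices $\Sigma_r$ and $q(\Sigma_r^2)$. An equivalent and perhaps cleaner way to organize this is to write $z\, q(z^2)$ directly and observe that for the rectangular SVD one has $p^\diamond(A) = U\, p(\Sigma)\, V^T$ with the full $\Sigma$, exploiting that $p$ is odd so $p(0) = 0$ and the zero blocks of $\Sigma$ contribute nothing; then $q(AA^T) A = U q(\Sigma\Sigma^T)\Sigma V^T$ and $\Sigma\Sigma^T$ has the squared singular values together with zeros, and $q(\Sigma\Sigma^T)\Sigma = p(\Sigma)$ entrywise since the zero rows/columns of $\Sigma$ kill the spurious $q(0)$ contributions. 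Either route is routine; I would present the compact-SVD version with the explicit constant-term tracking.
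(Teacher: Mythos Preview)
Your proof is correct. The paper does not actually give its own proof of this proposition, referring instead to \cite[Section~2.2]{aurentz2019stable}; but the proof it gives for the rational analogue (Corollary~\ref{cor:gmf-rational-evaluation}), which it describes as ``practically the same'', uses the \emph{full} SVD $A = U\Sigma V^T$ with square orthogonal $U$ and $V$. In that setting $s(AA^T) = U\, s(\Sigma\Sigma^T)\, U^T$ holds for every polynomial $s$, constant term included, so the bookkeeping you flag as the ``only subtlety'' simply does not arise; one multiplies by $U^TU = I$ and is done. This is exactly the alternative route you sketch at the end of your proposal, and it is the cleaner presentation --- your compact-SVD computation is correct but the constant-term splitting $q(AA^T) = q_0 I_m + U_r(q(\Sigma_r^2) - q_0 I_r)U_r^T$ is an avoidable complication.
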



For the proof of Proposition~\ref{prop:gmf-polynomial-evaluation} we refer to \cite[Section~2.2]{aurentz2019stable}.

\begin{corollary}\label{cor:gmf-rational-evaluation}
	Let $r$ be a rational function with an odd numerator and an even denominator, i.e.,  $r(z) = \dfrac{p(z^2)}{q(z^2)} z$ for some polynomials $p$ and $q$, such that the singular values of $A \in \R^{m \times n}$ and $0$ are not roots of $q$.  Then it holds

	\begin{equation*}
		r^\diamond(A) = q(A A^T)^{-1} p(A A^T) A = A q(A^T A)^{-1} p(A^T A).
	\end{equation*}
\end{corollary}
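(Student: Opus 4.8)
The plan is to reduce the rational identity to the polynomial identity of Proposition~\ref{prop:gmf-polynomial-evaluation}. Write $r(z) = \frac{p(z^2)}{q(z^2)}z$. Since the singular values of $A$ and $0$ are not roots of $q$, the rational function $r$ is well-defined on the singular values of $A$, so $r^\diamond(A)$ makes sense. The key observation is that, on the nonzero singular values $\sigma_i$ of $A$, the function $r$ agrees with the product of the polynomial $z \mapsto p(z^2)z$ and the reciprocal of $z \mapsto q(z^2)$; both of these are functions of $z^2$ times (in the first case) $z$, which is exactly the form handled by the polynomial proposition.

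First I would apply Remark~\ref{remark-poly-gmf} (or the definition of a GMF directly via the compact SVD) to express $r^\diamond(A)$ using the compact SVD $A = U_r \Sigma_r V_r^T$: we get $r^\diamond(A) = U_r\, r(\Sigma_r)\, V_r^T = U_r\, \operatorname{diag}(r(\sigma_i))\, V_r^T$. Then I would factor $r(\sigma_i) = q(\sigma_i^2)^{-1} p(\sigma_i^2)\sigma_i$, which is legitimate precisely because $q(\sigma_i^2) \ne 0$ by hypothesis. This gives
\begin{equation*}
	r^\diamond(A) = U_r\, \operatorname{diag}\!\big(q(\sigma_i^2)^{-1}\big)\, \operatorname{diag}\!\big(p(\sigma_i^2)\sigma_i\big)\, V_r^T.
\end{equation*}
Now $U_r \operatorname{diag}(p(\sigma_i^2)\sigma_i) V_r^T = \tilde p^\diamond(A)$ where $\tilde p(z) = p(z^2)z$ is an odd polynomial, so by Proposition~\ref{prop:gmf-polynomial-evaluation} this equals $p(AA^T)A = A\,p(A^TA)$. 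It remains to move the factor $U_r \operatorname{diag}(q(\sigma_i^2)^{-1}) U_r^T$ out front: since $U_r^T U_r = I_r$, we have $U_r \operatorname{diag}(q(\sigma_i^2)^{-1}) V_r^T = \big(U_r \operatorname{diag}(q(\sigma_i^2)^{-1}) U_r^T\big)\big(U_r \operatorname{diag}(p(\sigma_i^2)\sigma_i) V_r^T\big)$, and I would identify $U_r \operatorname{diag}(q(\sigma_i^2)^{-1}) U_r^T$ with $q(AA^T)^{-1}$. For the second expression in the claim, I would instead insert $V_r^T V_r = I_r$ and identify $V_r \operatorname{diag}(q(\sigma_i^2)^{-1}) V_r^T = q(A^TA)^{-1}$.

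The one genuine point requiring care — the main (mild) obstacle — is justifying that $q(AA^T)$ is invertible and that $q(AA^T)^{-1}$ really equals $U_r \operatorname{diag}(q(\sigma_i^2)^{-1}) U_r^T$. This is subtle because $AA^T = U_r \Sigma_r^2 U_r^T$ is generally singular (it is $m\times m$ of rank $r$), so $q(AA^T)$ need not be invertible as an $m \times m$ matrix unless $q(0) \ne 0$, which is \emph{not} assumed. The resolution is that in the formula $q(AA^T)^{-1}p(AA^T)A$ the inverse only ever acts on vectors in the range of $A$, i.e.\ in $\operatorname{span}(U_r)$, on which $q(AA^T)$ acts as the invertible operator $U_r \operatorname{diag}(q(\sigma_i^2)) U_r^T$; so the composition $q(AA^T)^{-1}p(AA^T)A$ should be interpreted (and is well-defined) as $U_r \operatorname{diag}(q(\sigma_i^2)^{-1}) p(\Sigma_r^2) \Sigma_r V_r^T$. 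I would state this interpretation explicitly, noting that when $q(0) \ne 0$ the matrix $q(AA^T)$ is genuinely invertible and the formula holds verbatim, and otherwise $q(AA^T)^{-1}$ denotes the inverse of $q(AA^T)$ restricted to $\operatorname{range}(A) = \operatorname{range}(AA^T)$ (equivalently, one may replace it by the appropriate pseudoinverse-type expression $\big(q(AA^T)|_{\operatorname{range}(A)}\big)^{-1}$). With that convention in place, the chain of equalities above is a routine verification in the SVD coordinates, and the analogous computation on the $V_r$ side gives the second equality.
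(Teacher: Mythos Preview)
Your overall strategy (pass to SVD coordinates, factor $r(\sigma_i)=q(\sigma_i^2)^{-1}p(\sigma_i^2)\sigma_i$, and reassemble) is the same as the paper's. But you have misread the hypothesis: the statement explicitly assumes that \emph{both} the singular values of $A$ \emph{and} $0$ are not roots of $q$. Hence $q(0)\ne 0$ is assumed, and the eigenvalues of $AA^T$ (namely $\sigma_1^2,\dots,\sigma_r^2,0,\dots,0$) are all non-roots of $q$, so $q(AA^T)$ is an honestly invertible $m\times m$ matrix. Your entire ``main obstacle'' paragraph, with its restricted-inverse interpretation, is therefore unnecessary.

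A second, related slip: even with $q(0)\ne 0$, the identification $U_r\,\diag(q(\sigma_i^2)^{-1})\,U_r^T = q(AA^T)^{-1}$ is false whenever $r<m$, because the left-hand side vanishes on $(\vspan U_r)^\perp$ while the right-hand side acts there as $q(0)^{-1}I$. Your ``range-of-$A$'' argument does rescue the final product, but the clean way to avoid the issue altogether is to do what the paper does: work with the \emph{full} SVD $A=U\Sigma V^T$ with $U\in\R^{m\times m}$ orthogonal. Then $q(AA^T)=U\,q(\Sigma\Sigma^T)\,U^T$ holds exactly, $q(\Sigma\Sigma^T)$ is an invertible diagonal matrix (using $q(0)\ne 0$ for the zero singular values), and inserting $U^TU=I$ gives
\[
r^\diamond(A)=U\,q(\Sigma\Sigma^T)^{-1}p(\Sigma\Sigma^T)\Sigma\,V^T=q(AA^T)^{-1}p(AA^T)A
\]
with no caveats. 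The paper also uses $q(0)\ne 0$ a second time, to conclude $r(0)=0$ so that $r^\diamond(\Sigma)=r(\Sigma)$ via Lemma~\ref{lemma:spd-equivalence}.
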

\begin{proof}
	The proof is practically the same as the proof of Proposition~\ref{prop:gmf-polynomial-evaluation}. We report it for completeness. 
	Let $A = U \Sigma V^T$ be an SVD of $A$. Then we have $AA^T = U \Sigma^2 U^T$, and more generally $s(AA^T) = U s(\Sigma^2) U^T$ for any polynomial $s$.

	Since $0$ is not a root of $q$, we have $r(0) = 0$ and hence by Lemma~\ref{lemma:spd-equivalence} we have
	\begin{equation*}
		r^\diamond(A) = U r^\diamond(\Sigma) V^T = U r(\Sigma) V^T = U q(\Sigma^2)^{-1}p(\Sigma^2) \Sigma V^T.
	\end{equation*}
	Note that $q(\Sigma^2)^{-1}$ is well defined because the singular values of $A$ are not roots of $q$.
	Now, multiplying by $U^TU = I$ we obtain
	\begin{equation*}
		r^\diamond(A) = U q(\Sigma^2)^{-1} U ^T U p(\Sigma^2) U^T U \Sigma V^T = q(AA^T)^{-1} p(AA^T) A.
	\end{equation*}
	The second identity can be obtained in a similar way.
\end{proof}

The following proposition gives a formulation of the above results for general functions. See also~\cite[Theorem~10]{arrigo2016computation} for an alternative formulation of the same identity.

\begin{proposition}
	\label{prop:gmf-general-equivalence}
	Let $A\in\R^{m\times n}$ and let $f$ be a function defined on the nonzero singular values of $A$. Defining $g(z)=\frac{f(\sqrt{z})}{\sqrt{z}}$, for $z \ne 0$, it holds
	\begin{equation*}
		f^\diamond(A) = g^{\diamond}(A A^T) A = A g^{\diamond}(A^T A).
	\end{equation*}
	Moreover, if $\displaystyle\lim_{z \to 0} \tfrac{f(z)}{z} = 0$, we can define $g(0) = 0$. Then $g(AA^T)$ and $g(A^TA)$ are well defined, and we have
	\begin{equation*}
		f^\diamond(A) = g(A A^T) A = A g(A^T A).
	\end{equation*}
\end{proposition}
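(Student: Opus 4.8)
The plan is to reduce the general statement to the rational (or polynomial) case already established in Corollary~\ref{cor:gmf-rational-evaluation} and Proposition~\ref{prop:gmf-polynomial-evaluation} by reasoning directly at the level of the SVD, which is how GMFs are defined. Write $A = U \Sigma V^T$ with $U \in \R^{m\times m}$, $V \in \R^{n\times n}$ orthogonal and $\Sigma$ as in the definition, and let $\sigma_1 \ge \dots \ge \sigma_r > 0$ be the nonzero singular values. Then $AA^T = U \Sigma\Sigma^T U^T$, where $\Sigma\Sigma^T \in \R^{m\times m}$ is diagonal with entries $\sigma_1^2, \dots, \sigma_r^2$ followed by zeros; in particular the nonzero singular values (equivalently, eigenvalues) of $AA^T$ are exactly $\sigma_i^2$. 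Since $g(z) = f(\sqrt z)/\sqrt z$ is defined precisely on the set $\{\sigma_1^2, \dots, \sigma_r^2\}$, the GMF $g^\diamond(AA^T)$ is well defined; and because $AA^T$ is symmetric positive semidefinite with (when we set $g(0)=0$) $g(0)=0$, Lemma~\ref{lemma:spd-equivalence} gives $g^\diamond(AA^T) = g(AA^T) = U\, g(\Sigma\Sigma^T)\, U^T$, where $g(\Sigma\Sigma^T)$ is the diagonal matrix with entries $g(\sigma_i^2) = f(\sigma_i)/\sigma_i$ in the first $r$ positions and $g(0)=0$ elsewhere (or, without the limit assumption, one keeps $g^\diamond$ and the same diagonal formula holds with zeros outside the leading $r\times r$ block by the definition of a GMF).

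Next I would simply multiply out. On one hand, $f^\diamond(A) = U f^\diamond(\Sigma) V^T$, where $f^\diamond(\Sigma) \in \R^{m\times n}$ has $f(\sigma_i)$ in position $(i,i)$ for $i \le r$ and zero otherwise. On the other hand,
\begin{equation*}
	g^\diamond(AA^T)\, A = U\, g(\Sigma\Sigma^T)\, U^T U \Sigma V^T = U\, \big(g(\Sigma\Sigma^T)\,\Sigma\big)\, V^T,
\end{equation*}
and a direct entrywise check shows $g(\Sigma\Sigma^T)\,\Sigma \in \R^{m\times n}$ has $(i,i)$ entry $\frac{f(\sigma_i)}{\sigma_i}\cdot\sigma_i = f(\sigma_i)$ for $i \le r$ and zeros elsewhere — that is, $g(\Sigma\Sigma^T)\,\Sigma = f^\diamond(\Sigma)$. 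Hence $f^\diamond(A) = g^\diamond(AA^T) A$. The identity $f^\diamond(A) = A\, g^\diamond(A^TA)$ is obtained the same way, using $A^TA = V \Sigma^T\Sigma V^T$ with $\Sigma^T\Sigma \in \R^{n\times n}$ diagonal having the same nonzero entries $\sigma_i^2$, so that $A\, g^\diamond(A^TA) = U\Sigma V^T V g(\Sigma^T\Sigma) V^T = U\,\big(\Sigma\, g(\Sigma^T\Sigma)\big)\,V^T$ and $\Sigma\, g(\Sigma^T\Sigma) = f^\diamond(\Sigma)$ by the same entrywise computation.

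For the second part, the point is only to justify that under the extra hypothesis $\lim_{z\to 0} f(z)/z = 0$ one may drop the $\diamond$ from $g^\diamond(AA^T)$ and $g^\diamond(A^TA)$. Setting $g(0)=0$ makes $g$ defined on the whole spectrum $\{\sigma_1^2,\dots,\sigma_r^2,0\}$ of $AA^T$ (and of $A^TA$), so $g(AA^T)$ is an ordinary matrix function; since $AA^T$ is positive semidefinite and $g(0)=0$, Lemma~\ref{lemma:spd-equivalence} yields $g^\diamond(AA^T) = g(AA^T)$, and likewise for $A^TA$. Substituting into the identities just proved gives $f^\diamond(A) = g(AA^T) A = A\, g(A^TA)$. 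I do not expect any real obstacle here: everything is a diagonalization computation plus two invocations of Lemma~\ref{lemma:spd-equivalence}. The only point requiring a little care is bookkeeping the rectangular shapes — making sure that $g(\Sigma\Sigma^T)\Sigma$ and $\Sigma g(\Sigma^T\Sigma)$ both reproduce $f^\diamond(\Sigma)$ including the correct zero pattern outside the leading $r\times r$ block — and checking that the definition of $g$ matches $f$ on the relevant points, i.e. $g(\sigma_i^2)\sigma_i = f(\sigma_i)$, which is immediate from $g(z) = f(\sqrt z)/\sqrt z$.
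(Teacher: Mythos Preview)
Your argument is correct, but it takes a different route from the paper's. The paper proves the identity by interpolation: it picks an odd polynomial $p(z)=q(z^2)z$ interpolating $f$ at the nonzero singular values, invokes Remark~\ref{remark-poly-gmf} and Proposition~\ref{prop:gmf-polynomial-evaluation} to get $f^\diamond(A)=p^\diamond(A)=q(AA^T)A=Aq(A^TA)$, then observes that $q$ interpolates $g$ at the $\sigma_i^2$ so that $q^\diamond(AA^T)=g^\diamond(AA^T)$ and $q^\diamond(A^TA)=g^\diamond(A^TA)$; the second part then follows from Lemma~\ref{lemma:spd-equivalence} exactly as you do. By contrast, despite your opening sentence, you never actually reduce to the polynomial or rational case: you bypass those results entirely and verify $g^\diamond(\Sigma\Sigma^T)\Sigma=f^\diamond(\Sigma)=\Sigma\,g^\diamond(\Sigma^T\Sigma)$ by a direct diagonal computation on the SVD. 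Your approach is more self-contained and arguably simpler, needing nothing beyond the definitions and Lemma~\ref{lemma:spd-equivalence}; the paper's approach is slightly less direct but has the virtue of building the general case on top of the polynomial case already established, which mirrors the progression Proposition~\ref{prop:gmf-polynomial-evaluation} $\to$ Corollary~\ref{cor:gmf-rational-evaluation} $\to$ Proposition~\ref{prop:gmf-general-equivalence}.
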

\begin{proof}
	Let $p(z)=q(z^2) z$ be an odd polynomial that interpolates $f$ in the nonzero singular values of $A$. From Proposition \ref{prop:gmf-polynomial-evaluation} and Remark \ref{remark-poly-gmf} we have
	\begin{equation*}
		f^{\diamond}(A)=p^{\diamond}(A)=q(A A^T) A = A q(A^T A).
	\end{equation*}
	Since $p$ interpolates $f$ in the nonzero singular values of $A$, $q$ interpolates $g$ in the squares of the nonzero singular values of $A$, which are the nonzero singular values of $AA^T$ and $A^TA$. Hence $q^\diamond(AA^T)=g^{\diamond}(AA^T)$ and $q^\diamond(A^TA)=g^{\diamond}(A^TA)$.

	If $g(0) = 0$, by Lemma~\ref{lemma:spd-equivalence} we also have $g^\diamond(A^TA)=g(A^TA)$ and $g^\diamond(AA^T)=g(AA^T)$.
\end{proof}

\begin{remark}
	If $AA^T$ is positive definite, by Lemma~\ref{lemma:spd-equivalence} it holds $f^\diamond(A) = g(AA^T) A$ without the assumption $\displaystyle\lim_{z \to 0} \tfrac{f(z)}{z}$, and similarly if $A^TA$ is positive definite we have $f^\diamond(A) = Ag(A^TA)$. Note that we could also artificially define $g(0) = 0$ without any assumptions on $f$, since the definition of a GMF only depends on the nonzero singular values of the matrix. However, this would cause $g$ to be discontinuous at $0$ and it would not be very useful in practice.
\end{remark}

The following proposition links $f^\diamond(A)$ with $f^\diamond(A^T)$, which will be useful when $A$ is rectangular. We recall that $A^+$ denotes the Moore-Penrose pseudoinverse of $A$, which can be defined as $A^+ = h^\diamond(A^T) = h^\diamond(A)^T$ for $h(z) = z^{-1}$. The more general statement in Proposition~\ref{prop:link-gmf-a-and-gmf-at} can be seen as a generalization of~\cite[Proposition~7(iv)]{arrigo2016computation} and~\cite[Theorem~4(d)]{hawkins1973generalized}.
\begin{proposition}
	\label{prop:link-gmf-a-and-gmf-at}
	Let $A \in \R^{m \times n}$ and let $f$ be a function defined on the nonzero singular values of $A$. Then it holds
	\begin{equation*}
		f^\diamond(A) = (A^+)^T f^\diamond(A^T)A.
	\end{equation*}
	More generally, assume that $f(z) = g(z) h(z) k(z)$, where $g, h, k$ are functions defined on the nonzero singular values of $A$. Then it holds
	\begin{equation*}
		f^\diamond(A) = g^\diamond(A) h^\diamond(A^T) k^\diamond(A).
	\end{equation*}
\end{proposition}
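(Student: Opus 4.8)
The plan is to prove both identities by reducing everything to diagonal matrices via the SVD, exactly as in the proof of Corollary~\ref{cor:gmf-rational-evaluation}. Write $A = U\Sigma V^T$ with $U\in\R^{m\times m}$, $V\in\R^{n\times n}$ orthogonal, and let $\sigma_1 \ge \dots \ge \sigma_r > 0$ be the nonzero singular values. The key observation is that every GMF appearing in the statement can be written with the \emph{same} orthogonal factors $U$ and $V$: indeed $f^\diamond(A) = U f^\diamond(\Sigma) V^T$, while $f^\diamond(A^T) = V f^\diamond(\Sigma^T) U^T$ since $A^T = V\Sigma^T U^T$ is an SVD of $A^T$. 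Thus a product such as $g^\diamond(A) h^\diamond(A^T) k^\diamond(A)$ telescopes: the inner orthogonal factors cancel in pairs ($V^T V = I_n$ and $U^T U = I_m$), leaving $U\,g^\diamond(\Sigma)\,h^\diamond(\Sigma^T)\,k^\diamond(\Sigma)\,V^T$. So the claim reduces to the purely diagonal identity $f^\diamond(\Sigma) = g^\diamond(\Sigma) h^\diamond(\Sigma^T) k^\diamond(\Sigma)$, which can be checked entrywise: the only nonzero entries are on positions $(i,i)$ with $i \le r$, where the left side is $f(\sigma_i)$ and the right side is $g(\sigma_i) h(\sigma_i) k(\sigma_i) = f(\sigma_i)$ by hypothesis. (One should note that $h^\diamond(\Sigma^T)$ has its nonzero entries in exactly the same positions $(i,i)$, $i\le r$, so the matrix product behaves like the product of the corresponding diagonal blocks.)

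For the first, more specific identity I would deduce it from the general one. Take the factorization $f(z) = g(z)\,h(z)\,k(z)$ with $g(z) = 1$ (the constant function), $h(z) = z^{-1}$, and $k(z) = z\cdot f(z)/z^2$... actually it is cleaner to take $g(z) = f(z)$, $h(z) = z^{-1}$, $k(z) = z$, so that $g(z)h(z)k(z) = f(z)$ on the nonzero singular values. Then $g^\diamond(A) = f^\diamond(A)$—wait, that is circular. The right choice is: on the nonzero singular values, write $f(z) = f(z)\cdot z^{-1} \cdot z$, but group it as $\big(f(z)\,z^{-1}\big)$ applied to $A^T$ is not what we want either. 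Instead I take $g(z) = z^{-1}$, $h(z) = f(z)$, $k(z) = z$; then $g^\diamond(A) = (A^+)^{\text{\,}}$ in the sense that $g^\diamond(A) = h_0^\diamond(A)$ with $h_0(z) = z^{-1}$, and one has $h_0^\diamond(A) = (A^+)^T$ by the pseudoinverse identity $A^+ = h_0^\diamond(A^T) = h_0^\diamond(A)^T$ recalled just before the proposition. With $h^\diamond(A^T) = f^\diamond(A^T)$ and $k^\diamond(A) = A$ (since $k(z)=z$ gives $k^\diamond(A)=A$ directly from the definition), the general identity yields $f^\diamond(A) = (A^+)^T f^\diamond(A^T) A$, as desired.

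I do not expect a genuine obstacle here; the argument is routine SVD bookkeeping. The one point requiring a little care is the handling of the zero singular values and the rectangular shape: one must check that when $m \ne n$, the "middle" GMF $h^\diamond(A^T)$ is an $n\times m$ matrix whose nonzero pattern is compatible with the outer factors so that no spurious contributions from the zero block appear, and that $\Sigma^T$ really is the $\Sigma$-matrix of $A^T$. This is transparent from the compact-SVD formulation $f^\diamond(A) = U_r f(\Sigma_r) V_r^T$: in compact form, $f^\diamond(A) = U_r f(\Sigma_r) V_r^T$, $f^\diamond(A^T) = V_r f(\Sigma_r) U_r^T$, $g^\diamond(A) = U_r g(\Sigma_r) V_r^T$, $k^\diamond(A) = U_r k(\Sigma_r) V_r^T$, and the products collapse because $V_r^T V_r = I_r = U_r^T U_r$; the zero singular values simply never enter. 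I would phrase the final proof using the compact SVD throughout to keep this clean, and mention the specialization $g(z)=z^{-1}$, $h(z)=f(z)$, $k(z)=z$ at the end to recover the first identity.
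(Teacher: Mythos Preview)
Your proposal is correct and follows essentially the same route as the paper: reduce via the SVD so that the orthogonal factors telescope, verify the identity $g^\diamond(\Sigma)\,h^\diamond(\Sigma^T)\,k^\diamond(\Sigma)=f^\diamond(\Sigma)$ directly, and then recover the first statement by taking $g(z)=z^{-1}$, $h(z)=f(z)$, $k(z)=z$. The paper works with the full SVD throughout (rather than switching to the compact form) and notes the specialization at the outset; you should also excise the false starts in the middle paragraph before writing it up.
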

\begin{proof}
	We directly prove the generalized version, since the first statement simply follows by taking $g(z) = z^{-1}$, $h(z) = f(z)$ and $k(z) = z$.

	Let $A$ have the singular value decomposition $A = U \Sigma V^T$, where $U \in \R^{m \times m}$ and $V \in \R^{n \times n}$ are orthogonal, and $\Sigma \in \R^{m \times n}$ has the singular values of $A$ on the main diagonal. We have:
	\begin{align*}
		g^\diamond(A) h^\diamond(A^T) k^\diamond(A) & = U g^\diamond(\Sigma) V^TV h^\diamond(\Sigma^T) U^TU k^\diamond(\Sigma) V^T \\
		& = U g^\diamond(\Sigma) h^\diamond(\Sigma)^T k^\diamond(\Sigma) V^T           \\
		& = U f^\diamond(\Sigma) V^T = f^\diamond(A),
	\end{align*}
	where we used that $g^\diamond(\Sigma) h^\diamond(\Sigma)^T k^\diamond(\Sigma) = f^\diamond(\Sigma)$, which can be verified directly.
\end{proof}

\begin{remark}
	The same proof of Proposition~\ref{prop:link-gmf-a-and-gmf-at} can be used to show that, if $f(z) g(z) = h(z) k(z)$, then it holds
	\begin{equation*}
		f^\diamond(A^T) g^\diamond(A) = h^\diamond(A^T) k^\diamond(A).
	\end{equation*}
	In particular we have $A^T f^\diamond(A) = f^\diamond(A^T) A$.
\end{remark}

\section{Rational Krylov methods} \label{sec-rat-kryl-methods}

The class of Krylov methods provides an efficient way to compute approximations to expressions of the form $f(A) \vec b$. The main idea behind these methods is to construct a low dimensional subspace $\mathcal{S}_k \subset \R^n$ for some integer $k \ll n$ using information from $A$ and $\vec b$, and then to approximate $f(A)\vec b$ with an appropriate vector from $\mathcal{S}_k$.

A popular choice for the approximation subspace $\mathcal{S}_k$ is the \emph{polynomial Krylov subspace}
\begin{equation*}
	\kryl_k(A,\vec b) = \vspan\{ \vec b, A \vec b, \dots, A^{k-1} \vec b\} = \{ p_{k-1}(A) \vec b : p \in \poly_{k-1}\},
\end{equation*}
where $\poly_{k-1}$ denotes the set of polynomials of degree $\le k-1$.

More generally, using a sequence of \emph{poles} $\{ \xi_k \}_{k \ge 1} \subseteq (\C\cup \{\infty\}) \setminus {\sigma(A)}$, one can define the \emph{rational Krylov subspace}
\begin{equation*}
	\rat_k(A,\vec b) = q_{k-1}(A)^{-1} \kryl_k(A, \vec b) = \Big\{ r_{k-1}(A) \vec b : r_{k-1}(z) = \frac{p_{k-1}(z)}{q_{k-1}(z)}, \text{with } p_{k-1} \in \poly_{k-1}\Big\},
\end{equation*}
where $q_{k-1}(z) = \dprod_{j = 1}^{k-1}(1 - z/\xi_j)$. In the case when all poles are located at $\infty$, we have $q_{k-1}(z) \equiv 1$ and hence we recover the polynomial Krylov subspace $\kryl_k(A, \vec b)$. It is easy to verify that the Krylov subspaces $\rat_k(A, \vec b)$ form a nested sequence, and that $\dim \rat_k(A, \vec b) = k$ as long as $k$ is smaller than the \emph{invariance index} $K$ of the sequence, i.e. the smallest integer such that $\rat_K(A, \vec b) = \rat_{K+1}(A, \vec b)$ (or, equivalently, $\kryl_K(A, \vec b) = \kryl_{K+1}(A, \vec b)$).

For $k \le K$, an orthonormal basis $\{ \vec v_1, \dots, \vec v_k \}$ of $\rat_k(A, \vec b)$ can be computed with the rational Arnoldi algorithm, introduced by Ruhe in \cite{Ruhe94}.
In the basic algorithm, the first basis vector is chosen as $\vec v_1 = \vec b/\norm{\vec b}_2$.
Then, given a set of vectors $\{ \vec v_1, \dots, \vec v_j \}$ which form an orthonormal basis of $\rat_j(A, \vec b)$, the next basis vector $\vec v_{j+1}$ is computed by orthonormalizing the vector $(I - \frac1{\xi_{j+1}}A)^{-1} A \vec v_j$ against the previously computed basis vectors.
To prevent the algorithm from failing, it is required that $(I - \frac1{\xi_{j+1}}A)^{-1} A \vec v_j \in \rat_{k+1}(A, \vec b) \setminus \rat_k(A, \vec b)$; this property is almost always satisfied in practice, however there are no theoretical guarantees that it holds.
An approach for finding a vector $\vec w_j$ that guarantees that $(I - \frac1{\xi_{j+1}}A)^{-1} \vec w_j \in \rat_{k+1}(A, \vec b) \setminus \rat_k(A, \vec b)$ and is (near-)optimal in some sense was recently discussed in \cite{BerljafaGuettel17}, using the notion of \emph{continuation pairs} $(\eta_j/\rho_j, \vec t_j)$: in general such a vector $\vec w_j$ is of the form $(\rho_m A - \eta_m I) V_j \vec t_j$, where $V_j = [\vec v_1 \dots \vec v_j] \in \C^{n \times j}$.

Using the matrix with orthonormal columns $V_k = [\vec v_1  \dots \vec v_k ]\in \C^{n \times k}$, we can compute the following approximation to $f(A) \vec b$ from the subspace $\rat_k(A, \vec b)$:
\begin{equation}
	\label{eqn:rational-krylov-approximation}
	\barvec y_k = V_k f(V_k^*AV_k) V_k^* \vec b = V_k f(A_k) \vec e_1,
\end{equation}
where $A_k = V_k^* A V_k$ is the projection of $A$ on the subspace $\rat_k(A, \vec b)$, and $\vec e_1$ denotes the first vector of the canonical basis of $\C^n$. The accuracy of the approximation $\barvec y_k$ is largely dependent on the pole sequence $\{\xi_k\}_{k \ge 1}$. The problem of choosing a sequence of poles that is effective for a particular function $f$ and a given set containing the spectrum of $A$ has often been discussed in the literature: we refer, for instance, to \cite{Guettel13} and the references therein.

Some specific sequences of poles lead to special cases of the rational Krylov subspaces $\rat_k(A, \vec b)$: if all the poles are equal to $\xi \in \C \setminus \sigma(A)$, then $\rat_(A, \vec b)$ is a \emph{Shift-and-Invert Krylov subspace},
\begin{equation*}
	Q_k(A, \vec b) = \kryl_k\left(\left(I - \frac 1 \xi A\right)^{-1}, \vec b\right),
\end{equation*}
that was first investigated for the computation of matrix functions in \cite{MoretNovati04, VDEHochbruck06}. If the poles alternate between $0$ and $\infty$, we obtain the \emph{extended Krylov subspace}, introduced in \cite{DruskinKnizhnerman98}, which is of the form
\begin{equation*}
	\rat_{2k}(A, \vec b) = A^{-k}\kryl_{2k}(A, \vec b) = \kryl_{2k}(A, A^{-k}\vec b).
\end{equation*}
We refer to \cite{GuettelThesis} for an extensive discussion on rational Krylov methods for the computation of matrix functions.

\section{Krylov methods for GMFs}
\label{sec:krylov-gmf}
The computation of $f^{\diamond}(A) \vec b$ by using an SVD of $A$ can be unfeasible if the size of $A$ is large. A possible way to approximate the product of a generalized matrix function times a vector is to use two rectangular matrices with orthonormal columns to project the matrix $A$ onto a smaller space and then to compute the generalized matrix function of the projected matrix: let $k\ll n$ and let $U_k,V_k\in\R^{n\times k}$ with orthonormal columns,  $B_k\in\R^{k\times k}$ such that
$A\approx U_kB_kV_k^T$, then
\begin{equation}
\label{eqn:gmf-approx-polynomial-0}
	f^{\diamond}(A) \vec b\approx U_k f^\diamond(B_k)V_k^T \vec b,
\end{equation}
where the matrix $f^\diamond (B_k)$ can be computed by means of the singular value decomposition.

In this section we describe how to compute such a projected matrix using the Golub-Kahan bidiagonalization, which is equivalent to using a polynomial Krylov method on the matrices $A^TA$ and $AA^T$. This strategy corresponds to the ``third approach'' discussed in \cite[Section~5.4]{arrigo2016computation}; the numerical results of \cite{arrigo2016computation} indicate that \eqref{eqn:gmf-approx-polynomial-0} is often more effective than the other approaches they propose, which are based on Gauss and Gauss-Radau quadrature formulas.

In Sections~\ref{subsec:rat-kry-gmf} and~\ref{subsec:short-term-rec} we generalize this approach to the rational Krylov case and we show that a short recurrence like the one of the Golub-Kahan bidiagonalization can be obtained in the rational case too.

\subsection{Golub-Kahan bidiagonalization}
The first method we describe for the computation of a truncated SVD is the Golub-Kahan bidiagonalization introduced for the first time in 1965.

\begin{theorem} \label{thm:householder-bidiagonalization}
	Let $A\in \R^{m\times n}$, with $m>n$. There exist orthogonal matrices $P\in \R^{m\times m}, Q\in \R^{n\times n}$ such that
	\begin{equation}
		P^TAQ=B=
		\begin{bmatrix}
			\alpha_1 & \beta_1  & \dots   & \dots        & 0           \\
			0        & \alpha_2 & \beta_2 & \dots        & \vdots      \\
			\vdots   & \ddots   & \ddots  & \ddots       & \vdots      \\
			\vdots   &          & 0       & \alpha_{n-1} & \beta_{n-1} \\
			0        & \dots    & \dots   & 0            & \alpha_n    \\
			0        & \dots    & \dots   & \dots        & 0           \\
			\vdots   &          &         &              & \vdots      \\
			0        & \dots    & \dots   & \dots        & 0
		\end{bmatrix}	.
	\end{equation}
	Moreover the first column of $Q$ can be chosen arbitrarily.
\end{theorem}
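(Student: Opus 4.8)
The plan is to construct the matrices $P$ and $Q$ by an explicit Householder-based elimination procedure, alternating between left and right transformations, exactly as in the classical Golub--Kahan algorithm. First I would pick $Q$ to have its prescribed first column $\vec q_1$ (a unit vector), and complete it to an orthogonal matrix in any way; it is more convenient, however, to phrase the construction so that the freedom in the first column is transparent. So instead I would start by setting $A_0 = A$ and, at the very first step, apply a right Householder reflector $H_1^{(R)}$ that fixes nothing on the first column but will be absorbed into the choice of $\vec q_1$; in practice one simply observes that the algorithm below, when started from $AQ_1$ for an arbitrary orthogonal $Q_1$ whose first column is the desired $\vec q_1$, produces the claimed form, and the final $Q$ is $Q_1$ times the accumulated right reflectors, whose product still has $\vec q_1$ as its first column because every subsequent right reflector acts trivially on the first coordinate.

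The core of the argument is the inductive elimination. Suppose after some steps we have $P_j^T A Q_j = \begin{bmatrix} B_j & 0 \\ 0 & \tilde A_j\end{bmatrix}$ in block form, where $B_j$ is $j\times(j{+}1)$ upper bidiagonal (or the appropriate truncated shape) and $\tilde A_j$ is the remaining $(m{-}j)\times(n{-}j{-}1)$ block. I would then: (i) apply a left Householder reflector acting only on rows $j{+}1,\dots,m$ to zero out all entries below the $(j{+}1,j{+}1)$ position in the first column of $\tilde A_j$, producing the new diagonal entry $\alpha_{j+1}$; (ii) apply a right Householder reflector acting only on columns $j{+}2,\dots,n$ to zero out all entries to the right of the $(j{+}1,j{+}2)$ position in the new first row, producing $\beta_{j+1}$. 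Crucially, the right reflector acts trivially on columns $1,\dots,j{+}1$, so it does not disturb the bidiagonal part already built, nor the first column of the accumulated $Q$; and the left reflector acts trivially on rows $1,\dots,j$. Accumulating all the reflectors into $P = H_1^{(L)} H_2^{(L)} \cdots$ and $Q = Q_1 H_1^{(R)} H_2^{(R)} \cdots$ gives orthogonal matrices with $P^T A Q = B$ of the stated shape, and since $m>n$ the bottom $m-n$ rows are forced to be zero once columns are exhausted.

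The step I expect to require the most care is bookkeeping the index ranges of the reflectors so that previously created zeros are genuinely preserved, and in particular verifying that the right reflectors never touch the first column of $Q$ — this is what makes the ``arbitrary first column'' claim work. There is also a mild degenerate case: if at some step the column to be eliminated is already zero, one simply takes the identity for that reflector (so some $\alpha_j$ or $\beta_j$ may vanish), which does not affect the shape. I would state this once and not belabor it. Everything else is the standard observation that a Householder reflector is orthogonal and that products of orthogonal matrices are orthogonal.
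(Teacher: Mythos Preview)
Your proposal is correct and is precisely the classical Householder bidiagonalization construction; the paper does not give its own proof but simply refers to \cite[Section~5.4]{golub2013matrix} for exactly this argument. Your handling of the ``arbitrary first column of $Q$'' claim---pre-multiplying by an initial $Q_1$ and then noting that every subsequent right reflector fixes $\vec e_1$---is the right way to make that part explicit.
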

The proof of Theorem~\ref{thm:householder-bidiagonalization} is constructive and it is usually called Householder bidiagonalization process. It can be found in \cite[Section 5.4]{golub2013matrix}.

In the case of large matrices the full bidiagonalization is too expensive. The goal of the Golub-Kahan bidiagonalization is to extract good approximations of singular values and singular vectors before the full bidiagonalization is completed.

Let $$P=[\vec p_1|\dots|\vec p_m],\qquad Q=[\vec q_1|\dots| \vec q_n]$$ be a column partitioning of $P$ and $Q$. From Theorem~\ref{thm:householder-bidiagonalization} it follows that $AQ=PB$ and $A^TP=QB^T$; using these relations we have
\begin{equation} \label{eqn:Golub-kahan-recurrence}
	\begin{aligned}
		A\vec q_k   & =\alpha_k\vec p_k+\beta_{k-1}\vec p_{k-1}, \\
		A^T\vec p_k & =\alpha_k\vec q_k+\beta_k\vec q_{k+1},
	\end{aligned}
\end{equation}
for $k=1,\dots,n,$ with the convention that $\beta_0\vec p_0=0$ and $\beta_n\vec q_{n+1}=0$.

Let $r_k=A\vec q_k-\beta_{k-1}\vec p_{k-1}.$ Using the orthogonality of the columns of $P$, we have
\begin{equation}\label{eq_alpha}
	\alpha_k=\pm\norm{\vec r_k}_2, \qquad \vec p_k=\frac{\vec r_k}{\alpha_k} \quad \text{(if }\alpha_k \neq 0).
\end{equation}
Similarly defining $\vec s_k=A^T\vec p_k-\alpha_k\vec q_k$ we have
\begin{equation}\label{eq_beta}
	\beta_k=\pm\norm{\vec s_k}_2, \qquad \vec q_{k+1}=\frac{\vec s_k}{\beta_k} \quad \text{(if }\beta_k \neq 0).
\end{equation}
Hence, given $\vec p_{k-1},\vec q_k,\beta_{k-1}$, we can compute $\vec p_{k}$, $\vec q_{k+1}$, $\beta_{k}$ and $\alpha_k$.

Defining $P_k=[\vec p_1|\dots|\vec p_k],$ $Q_k=[\vec q_1|\dots| \vec q_k]$ and
\begin{equation*}
	B_k=
	\begin{bmatrix}
		\alpha_1 & \beta_1  & 0       & \dots        & 0           \\
		0        & \alpha_2 & \beta_2 & \ddots       & \vdots      \\
		\vdots   & \ddots   & \ddots  & \ddots       & 0           \\
		\vdots   &          & 0       & \alpha_{k-1} & \beta_{k-1} \\
		0        & \dots    & \dots   & 0            & \alpha_k    \\
	\end{bmatrix},
\end{equation*}
after the $k$-th step of \eqref{eq_alpha} and \eqref{eq_beta} we have
\begin{equation}
	\begin{aligned}
		AQ_k   & =P_kB_k,                  \\
		A^TP_k & =Q_kB_k^T+\vec s_k e_k^T,
	\end{aligned}
\end{equation}
assuming $\alpha_k>0$. It can be shown that
\begin{equation}
	\begin{aligned}
		\text{span}\{\vec q_1,\dots, \vec q_k\} & =\kryl_k(A^TA,\vec q_1),  \\
		\text{span}\{\vec p_1,\dots, \vec p_k\} & =\kryl_k(AA^T,A\vec q_1), \\
	\end{aligned}
\end{equation}
thus the convergence of the Golub-Kahan bidiagonalization follows from the convergence of the Lanczos method applied on $A^TA$ and $AA^T$.

For a given $k$, we can approximate $A$ with $P_kB_kQ_k^T$, and hence we can compute an approximation of the SVD of $A$ by computing the SVD of $B_k$.
For further information on the Golub-Kahan bidiagonalization we refer to \cite[Chapter 10]{golub2013matrix}.

The vector $\vec y = f^\diamond(A) \vec b$ can be approximated with the expression
\begin{equation}
	\label{eqn:gmf-approx-polynomial}
	\barvec y_k = P_k f^\diamond(B_k) Q_k^T \vec b = \norm{\vec b}_2 P_k f^\diamond(B_k) \vec e_1 \: \in \: \kryl_k(AA^T, A \vec b).
\end{equation}

We refer to this approximation as a polyomial Krylov method for GMFs.

\subsection {Rational Krylov methods for GMFs} \label{subsec:rat-kry-gmf}
As we saw in the previous section, the Golub-Kahan bidiagonalization computes orthonormal bases for the polynomial Krylov subspaces $\kryl_k(A^TA, \vec b)$ and $\kryl_k(AA^T, A \vec b)$. By analogy with that approach, in this section we propose to compute an approximation to $f^\diamond(A) \vec b$ using the rational Krylov subspaces
\begin{equation}
	\rat_k(A^T A,\vec b) \qquad \text{and} \qquad \rat_k(A A^T, A \vec b),
\end{equation}
where $q_{k-1}(z) = \dprod_{j=1}^{k-1}(1 - z/\xi_j)$, for a given pole sequence $\{ \xi_j \}_{j \ge 1}$.

Assume that we have constructed two matrices with orthonormal columns $P_k$ and $Q_k$, such that $\vspan(P_k) = \rat_k(AA^T, A\vec b)$ and $\vspan(Q_k) = \rat_k(A^TA, \vec b)$. Then, defining $B_k = P_k^T A Q_k$, by analogy with the polynomial Krylov approach we can introduce the vector
\begin{equation}\label{eqn:gmf-approx-rational}
	\barvec y_k = P_k f^\diamond(B_k) Q_k^T \vec b=P_k f^\diamond(B_k) \vec e_1,
\end{equation}
which is an approximation to $f^\diamond(A) \vec b$ from the subspace $\rat_k(A A^T, A \vec b)$.

First of all, notice that it is sufficient to compute only one of the two rational Krylov subspaces: indeed, since it holds $A \rat_k(A^TA, \vec b) = \rat_k(A A^T, A \vec b)$, we can compute an orthonormal basis of the subspace $\rat_k(AA^T, A \vec b)$ simply by orthonormalizing the columns of $A Q_k$. This is equivalent to computing a QR decomposition $A Q_k = W_k R_k$, where $W_k$ has orthonormal columns and $R_k$ is upper triangular, so we can set $P_k = W_k$. Moreover, notice that we also have
\begin{equation*}
	B_k = P_k^T A Q_k = W_k^TW_k R_k = R_k,
\end{equation*}
i.e.~with the QR decomposition we also recover the matrix $B_k$, without the need to project $A$ explicitly. The basis $Q_k$ of the subspace $\rat_k(A^T A, \vec b)$ can be computed by applying the rational Arnoldi algorithm to the matrix $A^T A$ with initial vector~$\vec b$.

\subsection{Short recurrence for rational Golub-Kahan algorithm}\label{section:Rat-Golub_Kahan}
\label{subsec:short-term-rec}
In the Golub-Kahan bidiagonalizazion (without reorthogonalization), we can compute the last column of $P_k$, $Q_k$ and $B_k$ just by knowing a few previous columns of $P_k$ and $Q_k$, by means of the equations \eqref{eqn:Golub-kahan-recurrence}. This short recurrence is possible because of the bidiagonal structure of the matrix $B_k$, that unfortunately is not preserved when we perform a rational Krylov method.

In this section we show that, if a rational Krylov method is used, the matrix~$B_k$ is a quasiseparable matrix (see Definition~\ref{def:quasiseparable-matrix}). This structure extends the bidiagonal form that is obtained during the Golub-Kahan bidiagonalization. Using such structure we build a short recurrence that allows us to update the matrix $P_k$ and the matrix $B_k$ avoiding the full orthogonalization related to the computation of the QR factorization of the matrix $AQ_k.$

\begin{definition}\label{def:semiseparable-matrix}
	A matrix $S\in \R^{n\times n}$ is called a semiseparable matrix if all submatrices
	taken out of the lower and upper triangular part of the matrix have rank at most 1, that is
	\begin{equation*}
		\rank S(i : n, 1 : i) \le 1 \quad \text{and} \quad \rank S(1 : i, i : n) \le 1,
	\end{equation*}
	for every $i=1,\dots,n$.
	Moreover, S is called a generator representable semiseparable
	matrix if the lower and upper triangular parts of the matrix are derived from a rank~1 matrix, that is
	\begin{equation*}
		\tril(S) = \tril(\vec u\vec v^T )\quad \text{and} \quad \triu(S) = \triu(\vec p\vec q^T ),
	\end{equation*}
	for $\vec u,\vec v,	\vec p,\vec q\in \R^n$.
\end{definition}

\begin{definition}\label{def:quasiseparable-matrix}
	A matrix S is called a quasiseparable matrix if all the subblocks
	taken out of the strictly lower triangular part of the matrix (respectively, the strictly upper triangular part) are of rank at most 1, that is
	\begin{equation*}
		\rank S(i + 1 : n, 1 : i) \le 1 \quad \text{and} \quad \rank S(1 : i, i + 1 : n) \le 1,
	\end{equation*}
	for every $i=1,\dots,n$.

\end{definition}

The following theorem from \cite[Section~1.5.2]{vandebril2007matrix} gives us a complete characterization of invertible semiseparable matrices.

\begin{theorem}\label{thm:inverse-semiseparable}
	The inverse of an invertible tridiagonal matrix is a semiseparable
	matrix, and vice versa. Moreover, the inverse of an invertible irreducible tridiagonal matrix is a generator representable semiseparable matrix and vice versa.
\end{theorem}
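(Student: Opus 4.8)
My plan is to deduce both equivalences from the \emph{nullity theorem}: for an invertible matrix $A$ and (nonempty) index sets $\alpha,\beta \subseteq \{1,\dots,n\}$,
\begin{equation*}
	\rank A(\alpha,\beta) - |\beta| = \rank A^{-1}(\beta^c, \alpha^c) - |\alpha^c|,
\end{equation*}
where $\alpha^c,\beta^c$ denote complements in $\{1,\dots,n\}$; this can be proved via Schur complements (equivalently, Jacobi's identity on the minors of the inverse). Granting it, the equivalence of the tridiagonal and semiseparable conditions is bookkeeping. If $T$ is tridiagonal and $S = T^{-1}$, apply the identity with $A = S$, $\alpha = \{i,\dots,n\}$ and $\beta = \{1,\dots,i\}$: since $A^{-1}(\beta^c,\alpha^c) = T(i+1:n,\,1:i-1)$ vanishes ($T$ has no entries strictly below its subdiagonal), we obtain $\rank S(i:n,\,1:i) = i - (i-1) = 1$, and the mirror choice of indices gives $\rank S(1:i,\,i:n) \le 1$, so $S$ is semiseparable. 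Conversely, if $S$ is semiseparable and $T = S^{-1}$, apply the identity with $A = T$, $\alpha = \{i+2,\dots,n\}$ and $\beta = \{1,\dots,i\}$: since $A^{-1}(\beta^c,\alpha^c) = S(i+1:n,\,1:i+1)$ has rank at most $1$ by semiseparability, we get $\rank T(i+2:n,\,1:i) \le i - (i+1) + 1 = 0$, and symmetrically above the superdiagonal, so $T$ is tridiagonal.

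For the refinement, let $T$ be invertible and irreducible tridiagonal, with subdiagonal entries $c_1,\dots,c_{n-1}$ and superdiagonal entries $b_1,\dots,b_{n-1}$, all nonzero. I would study the columns $\vec x^{(j)} = T^{-1}\vec e_j$ via the three-term recurrence they satisfy. Let $\psi$ be the solution of that recurrence fixed by the left boundary condition ($\psi_1 = 1$, with the convention $\psi_0 = 0$), obtained by forward propagation --- legitimate because every $b_k \neq 0$; and let $\phi$ be the solution fixed by the right boundary condition ($\phi_n = 1$, $\phi_{n+1} = 0$), obtained by backward propagation --- legitimate because every $c_k \neq 0$. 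The equations in rows $1,\dots,j-1$ of $T\vec x^{(j)} = \vec e_j$ then force $(x^{(j)}_1,\dots,x^{(j)}_j)$ to be a scalar multiple of $(\psi_1,\dots,\psi_j)$, and the equations in rows $j+1,\dots,n$ force $(x^{(j)}_j,\dots,x^{(j)}_n)$ to be a scalar multiple of $(\phi_j,\dots,\phi_n)$; the forward/backward propagation is exactly what makes these solution spaces one-dimensional, and this is where irreducibility is used. Hence $(T^{-1})_{ij} = \rho_j\,\psi_i$ for $i \le j$ and $(T^{-1})_{ij} = \lambda_j\,\phi_i$ for $i \ge j$, so $\triu(T^{-1}) = \triu(\vec\psi\vec\rho^T)$ and $\tril(T^{-1}) = \tril(\vec\phi\vec\lambda^T)$: $T^{-1}$ is generator representable. (The same conclusion also follows from Cramer's rule, recognising each cofactor of $T$ as a leading principal minor times a trailing principal minor times a contiguous product of off-diagonal entries, which is nonzero along each triangle exactly when $T$ is irreducible.)

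For the converse of the refinement, let $S$ be invertible and generator representable, say $S_{k\ell} = u_k v_\ell$ for $k \ge \ell$ and $S_{k\ell} = p_k q_\ell$ for $k \le \ell$. By the first part, $T := S^{-1}$ is tridiagonal, and it remains to rule out zeros on its sub- and superdiagonals. If $T_{i+1,i} = 0$, then, $T$ being tridiagonal, $T(i+1:n,\,1:i) = 0$, whence $S = T^{-1}$ also has $S(i+1:n,\,1:i) = 0$, i.e. $u_k v_\ell = 0$ for all $k > i \ge \ell$; this forces either $v_\ell = 0$ for all $\ell \le i$ --- so the first column of $S$ vanishes --- or $u_k = 0$ for all $k > i$ --- so the last row of $S$ vanishes --- contradicting invertibility in either case. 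The same argument with the generators $p,q$ excludes $T_{i,i+1} = 0$, so $T$ is irreducible.

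The main obstacle I foresee is establishing the one-dimensionality of the homogeneous-solution spaces used in analysing the columns $\vec x^{(j)}$ --- equivalently, pinning down the exact form of the cofactors of an irreducible tridiagonal matrix; keeping track of how irreducibility and invertibility of $T$ together determine $\psi$ and $\phi$ up to a single scalar on each triangle is the delicate point. The nullity theorem is standard but would also need to be recalled or cited rather than assumed outright.
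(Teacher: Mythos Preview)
The paper does not prove this theorem at all: it is quoted from \cite[Section~1.5.2]{vandebril2007matrix} and used as a black box. So there is no ``paper's own proof'' to compare against; your proposal supplies what the paper deliberately omits.

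Your argument is correct. The first equivalence via the nullity theorem is clean and is precisely the approach taken in the cited monograph by Vandebril, Van~Barel and Mastronardi, so in that sense you have reconstructed the intended proof. The index bookkeeping you give checks out: with $\alpha=\{i,\dots,n\}$, $\beta=\{1,\dots,i\}$ one indeed lands on $T(i{+}1{:}n,\,1{:}i{-}1)$, and the converse choice $\alpha=\{i{+}2,\dots,n\}$, $\beta=\{1,\dots,i\}$ gives exactly the rank collapse needed to kill the second sub/superdiagonal of $T$.

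For the irreducible $\Rightarrow$ generator-representable direction, your three-term-recurrence argument is the classical one; the point you flag as delicate --- that irreducibility makes the forward and backward homogeneous solutions unique up to scalar --- is exactly right and is all that is needed. The Cramer's-rule alternative you mention (factoring each cofactor as a product of a leading principal minor, a trailing principal minor, and a contiguous product of off-diagonal entries) is in fact how many references present it and is arguably the quicker route if you want to avoid tracking two scalars $\rho_j,\lambda_j$.

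For the converse of the refinement, your block-triangular observation (if $T_{i+1,i}=0$ then $T$, hence $T^{-1}=S$, is block upper triangular at that split) is the efficient way to get $S(i{+}1{:}n,\,1{:}i)=0$; the ensuing dichotomy on the generators is sound because $\tril(S)=\tril(\vec u\vec v^T)$ includes the diagonal, so $v_1=0$ kills the entire first column and $u_n=0$ kills the entire last row.
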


As it has been proved in \cite[Section~5.2]{GuettelThesis}, if we perform a rational Krylov algorithm on a symmetric matrix we obtain a particular equivalence called rational Arnoldi decomposition. See also \cite[Section~2]{berljafa2015generalized} and \cite[eq.~(2.2)]{PPS21}.

\begin{theorem}\label{thm:symmetric-RAD}
	Given a symmetric matrix $A\in\R^{n\times n}$ and a vector $\vec b\in \R^n$, let $\rat_{k+1}(A,\vec b)$ be the rational Krylov space with nonzero poles $\{\xi_1,\dots,\xi_k\}$  and assume that $k$ is less than the invariance index of the Krylov subspace. Let $Q_{k+1}\in\R^{n\times (k+1)}$ be the matrix with orthonormal columns generated by the Arnoldi algorithm, such that $\vspan (Q_{k+1}) =\rat_{k+1}(A,\vec b)$. Then the following relation holds:
	\begin{equation} \label{eqn:Rational-Arnoldi-decomposition}
		AQ_{k+1}\underline{K_k}=Q_{k+1}\underline{H_k}, \quad \text{with} \quad \underline{K}_k=\underline I_k+D_k\underline H_k,
	\end{equation}
	where $\underline{H_k}\in \R^{(k+1)\times k}$ is a full rank tridiagonal irreducible symmetric matrix, $\underline{I}_k$ is the $(k+1)\times k$ identity matrix and $D_k=\text{diag}(0,\frac{1}{\xi_1},\dots,\frac{1}{\xi_{k-1}})$, where $\frac{1}{\infty}=0.$
\end{theorem}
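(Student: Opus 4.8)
The plan is to recall the standard rational Arnoldi decomposition and then exploit the symmetry of $A$ to deduce the claimed structure of $\underline{H_k}$ and $\underline{K_k}$. First I would set up the general rational Arnoldi recurrence: since the rational Arnoldi algorithm orthonormalizes $(I - \tfrac1{\xi_{j+1}}A)^{-1} A\vec v_j$ against the previous basis vectors, one obtains at step $k$ a relation of the form $A Q_{k+1} \underline{K_k} = Q_{k+1} \underline{H_k}$ with $\underline{H_k}, \underline{K_k} \in \R^{(k+1)\times k}$ both \emph{unreduced upper Hessenberg} (nonzero subdiagonal entries, because at each step the newly generated vector genuinely enlarges the subspace, which is exactly the invariance-index assumption), and with the columns of $\underline{K_k}$ encoding the poles. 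Writing $\vec w_j = (I - \tfrac1{\xi_{j+1}}A)^{-1}A\vec v_j = A\vec v_j + \tfrac1{\xi_{j+1}} A\vec w_j$ and projecting onto $Q_{k+1}$ produces precisely the identity $\underline{K}_k = \underline I_k + D_k \underline{H}_k$ with $D_k = \diag(0, \tfrac1{\xi_1}, \dots, \tfrac1{\xi_{k-1}})$; here the leading $0$ appears because $\vec v_1 = \vec b/\norm{\vec b}_2$ carries no pole. This part is essentially bookkeeping, and I would cite \cite[Section~5.2]{GuettelThesis} or \cite{berljafa2015generalized} for the detailed derivation rather than reprove it.

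The genuinely new content is the tridiagonality and symmetry of $\underline{H_k}$, and this is where I would spend the effort. From $A Q_{k+1}\underline K_k = Q_{k+1}\underline H_k$ and $\underline K_k = \underline I_k + D_k \underline H_k$ we get $A Q_{k+1}\underline I_k + A Q_{k+1} D_k \underline H_k = Q_{k+1}\underline H_k$, i.e. $A Q_{k+1}\underline I_k = Q_{k+1}(I - A Q_{k+1} D_k)\underline H_k$ — but it is cleaner to argue directly from the orthogonal-projection identity. Let $\widehat Q_k := Q_{k+1}\underline I_k = [\vec v_1 \dots \vec v_k]$. Multiplying $A Q_{k+1}\underline K_k = Q_{k+1}\underline H_k$ on the left by $\widehat Q_k^T$ and using $\widehat Q_k^T Q_{k+1} = \underline I_k^T$ (so that $\widehat Q_k^T Q_{k+1}\underline H_k = \underline I_k^T\underline H_k =: H_k$, the square part of $\underline H_k$) gives $\widehat Q_k^T A Q_{k+1}\underline K_k = H_k$. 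Now $\widehat Q_k^T A Q_{k+1}\underline K_k = \widehat Q_k^T A \widehat Q_k + (\text{correction from the last column})$; using symmetry of $A$ one checks $\widehat Q_k^T A \widehat Q_k$ is symmetric, and the correction terms are lower-triangular (upper-Hessenberg transposed times upper-triangular pieces), which forces $H_k$ to be symmetric. A symmetric unreduced upper Hessenberg matrix is tridiagonal and irreducible, and fullness of $\underline H_k$ follows from the unreduced subdiagonal; one then transfers symmetry from $H_k$ to the rectangular $\underline H_k$ by noting that the last row of $\underline H_k$ has a single nonzero entry matching the superdiagonal pattern. Alternatively, the slickest route is: the rational Krylov space $\rat_{k+1}(A,\vec b)$ with these poles equals a polynomial Krylov space in the shifted-and-inverted variable only in special cases, so instead one invokes that $Q_{k+1}^T A Q_{k+1}$ is symmetric tridiagonal (Lanczos on $A$ restricted appropriately) — but since the rational case does not give a symmetric \emph{projection}, the correct statement is the decomposition form above, and the tridiagonality of $\underline H_k$ is really the rational-Lanczos short-recurrence phenomenon.

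The main obstacle is making the symmetry argument for $\underline H_k$ rigorous without circularity: one must not assume the short recurrence in order to prove it. The clean way is to take the general rational Arnoldi decomposition (valid for any $A$, with $\underline H_k$ merely unreduced upper Hessenberg) as given from the literature, and then show $H_k = \underline I_k^T \underline H_k$ is symmetric purely from $A = A^T$. The key algebraic step will be to rearrange $\underline K_k = \underline I_k + D_k \underline H_k$ together with $AQ_{k+1}\underline K_k = Q_{k+1}\underline H_k$ into a relation of the form $H_k (\text{something}) = (\text{something})^T H_k^T$ after left-multiplying by $Q_{k+1}^T$ and exploiting orthonormality; symmetry of $A$ then yields $H_k M = M^T H_k^T$ for an invertible $M$ built from $D_k$ and $H_k$, and a triangular/Hessenberg degree count shows both sides must be symmetric tridiagonal. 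Irreducibility and full rank come for free from the unreduced subdiagonal. I would also remark that this is exactly the structure that will later let us read off the quasiseparable form of $B_k$ in Section~\ref{subsec:short-term-rec}, via Theorem~\ref{thm:inverse-semiseparable}.
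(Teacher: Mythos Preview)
The paper does not supply its own proof of this theorem: it is stated as a known result and attributed to \cite[Section~5.2]{GuettelThesis}, with further pointers to \cite[Section~2]{berljafa2015generalized} and \cite[eq.~(2.2)]{PPS21}. In that sense your plan to cite these references for the rational Arnoldi decomposition and the relation $\underline K_k = \underline I_k + D_k\underline H_k$ already matches what the paper does, and nothing more is required.

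Your additional attempt to \emph{derive} the tridiagonality and symmetry of $\underline H_k$ from $A=A^T$ goes beyond the paper's treatment. As written, however, that part of your proposal is not a proof but a sequence of partial strategies: you start one computation, abandon it for a ``cleaner'' route, then sketch an alternative via $Q_{k+1}^T A Q_{k+1}$ that you yourself note does not directly apply, and finally outline a third approach via an identity $H_k M = M^T H_k^T$ whose details (what $M$ is, why it is invertible, and why the degree count forces tridiagonality) are never filled in. None of these threads is carried to completion, so if you intend to include an argument rather than a citation you would still need to choose one line and finish it. The standard argument in \cite[Section~5.2]{GuettelThesis} proceeds by showing directly that the orthogonalization coefficients $h_{ij}$ vanish for $i<j-1$ using the symmetry of $A$ and the nested structure of the rational Krylov spaces; that is the cleanest route if you want to spell it out.
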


Starting form \eqref{eqn:Rational-Arnoldi-decomposition}, we are going to prove that the projection of the symmetric matrix $A$ on the Krylov subspace (i.e., $Q^T_{k+1}AQ_{k+1}$) is the sum of a diagonal matrix and a semiseparable matrix.

\begin{theorem}{\label{thm:semiseparable-projection}}
	Let $A\in \R^{n\times n}$ be a symmetric matrix and let $Q_{k+1}\in\R^{n\times (k+1)}$ be the matrix with orthonormal columns generated by the rational Arnoldi algorithm using poles $\{\xi_1,\dots,\xi_k\}$ different from zero and infinity. Assuming that $k+1$ is less than the invariance index, we have
	$$J_{k+1}:=Q_{k+1}^TAQ_{k+1}=S+\tilde D_{k},$$
	where $\tilde D_{k}=\diag(0,\xi_1,\dots,\xi_{k})$ and $S$ is a symmetric generator representable semiseparable matrix.
\end{theorem}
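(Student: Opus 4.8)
The plan is to start from the rational Arnoldi decomposition \eqref{eqn:Rational-Arnoldi-decomposition} and to manipulate it into an explicit formula for $J_{k+1} = Q_{k+1}^T A Q_{k+1}$ in terms of the tridiagonal matrix $\underline{H_k}$ and the pole diagonal $D_k$. First I would observe that from $A Q_{k+1} \underline{K_k} = Q_{k+1} \underline{H_k}$ and the orthonormality of $Q_{k+1}$, multiplying on the left by $Q_{k+1}^T$ gives $J_{k+1} \underline{K_k} = \underline{H_k}$, i.e.\ $J_{k+1}(\underline I_k + D_k \underline H_k) = \underline H_k$. Writing $H_k$ for the square $k \times k$ leading part of $\underline H_k$ and separating the last row, one gets a relation of the form $J_{k+1}$ times an (almost) square matrix equals $\underline H_k$; the key algebraic step is to turn this into an expression like $J_{k+1} = \underline H_k (\text{something invertible})^{-1}$ plus a correction coming from the bordered structure, and to identify the inverse of an irreducible tridiagonal matrix using Theorem~\ref{thm:inverse-semiseparable}.

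More concretely, I would proceed as follows. Partition $\underline K_k$ and $\underline H_k$ by their first $k$ rows and last row, say $\underline K_k = \begin{bmatrix} K_k \\ \kappa e_k^T \end{bmatrix}$ and $\underline H_k = \begin{bmatrix} H_k \\ \eta e_k^T \end{bmatrix}$, where $K_k = I_k + \tilde D_k' H_k$ for the appropriate $k\times k$ diagonal block of $D_k$; note $H_k$ is tridiagonal, irreducible and symmetric, and $K_k$ is invertible (it is upper Hessenberg with nonzero subdiagonal, or one argues invertibility from the fact that $k+1$ is below the invariance index). From $J_{k+1}\underline K_k = \underline H_k$ one extracts $J_{k+1}(1{:}k{+}1,1{:}k) \cdot (\text{structure})$; the cleanest route is to use the symmetric analogue $A Q_{k+1}\underline K_k = Q_{k+1}\underline H_k$ and its ``transpose'' companion, exploiting that $J_{k+1}$ is symmetric, to get both $J_{k+1}\underline K_k = \underline H_k$ and a column version, and then to solve. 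After isolating $J_{k+1}$, one writes $H_k K_k^{-1}$ and uses that $K_k^{-1}H_k = (H_k^{-1} + \tilde D_k')^{-1}$-type identities; since $H_k$ is irreducible tridiagonal, $H_k^{-1}$ is generator representable semiseparable by Theorem~\ref{thm:inverse-semiseparable}, and adding a diagonal and inverting is handled by the same theorem in the other direction. Collecting the diagonal terms that are peeled off in this process yields exactly the claimed splitting $J_{k+1} = S + \tilde D_k$ with $\tilde D_k = \diag(0,\xi_1,\dots,\xi_k)$, and the off-diagonal (in fact full lower/upper triangular) part $S$ inherits the generator representable semiseparable structure; symmetry of $S$ follows from symmetry of $J_{k+1}$ and of $\tilde D_k$.

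The main obstacle I anticipate is bookkeeping the bordered last row/column of the $(k+1)\times k$ rectangular matrices: the decomposition \eqref{eqn:Rational-Arnoldi-decomposition} is rectangular, so $\underline K_k$ and $\underline H_k$ are not square and cannot be inverted directly, and one has to be careful to show that the rank-one corrections coming from the last row ($\eta e_k^T$, $\kappa e_k^T$) combine with the structured part without destroying the semiseparable rank bounds — concretely, that $\rank S(i{+}1{:}k{+}1, 1{:}i) \le 1$ and the upper analogue still hold after adding these border terms. A clean way around this is to note that the block $H_k$ is itself the projection $Q_k^T A Q_k$ obtained by the \emph{same} construction one level down, so one could set up an induction on $k$; but the more direct linear-algebra manipulation via Theorem~\ref{thm:inverse-semiseparable} should also work, provided one verifies the invertibility of $K_k$ carefully (using that the poles are finite and nonzero and that $k+1$ is strictly below the invariance index, so $\underline H_k$ has full column rank).
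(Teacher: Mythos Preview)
Your proposal identifies the right starting relation $J_{k+1}\underline K_k=\underline H_k$ and the right target theorem (Theorem~\ref{thm:inverse-semiseparable}), but it is missing the key device that makes the argument go through. The obstacle you flag --- that $\underline K_k,\underline H_k$ are $(k{+}1)\times k$ and cannot be inverted --- is exactly the point, and your proposed workarounds do not resolve it. In particular, the claim that ``the block $H_k$ is itself the projection $Q_k^T A Q_k$'' is false in the rational case (it holds only when all poles are at $\infty$), so the induction you suggest does not get off the ground; and the sketched manipulations with $H_kK_k^{-1}$ and $(H_k^{-1}+\tilde D_k')^{-1}$ never actually produce a closed expression for the full $(k{+}1)\times(k{+}1)$ matrix $J_{k+1}$ from a $(k{+}1)\times k$ relation.

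The paper's proof removes this obstacle by an extra step you did not see: it \emph{appends one artificial pole} $\xi_{k+1}=\infty$ and writes the decomposition at level $k{+}2$. Because $1/\infty=0$, the last row of $\underline K_{k+1}$ vanishes, so truncating to the leading $(k{+}1)\times(k{+}1)$ block yields the genuinely square identity $J_{k+1}=H_{k+1}K_{k+1}^{-1}$ with $K_{k+1}=I_{k+1}+D_kH_{k+1}$. From here the paper does not try to expand $H_{k+1}K_{k+1}^{-1}$ directly; instead it subtracts a diagonal $\hat D_k=\diag(\gamma,\xi_1,\dots,\xi_k)$ with a generic $\gamma$ and computes the \emph{inverse} of $J_{k+1}-\hat D_k$. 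A Sherman--Morrison step collapses $K_{k+1}-\hat D_k^{-1}H_{k+1}$ to a rank-one perturbation of the identity, and the resulting inverse is seen to be irreducible tridiagonal; Theorem~\ref{thm:inverse-semiseparable} then gives that $J_{k+1}-\hat D_k$ (and hence $J_{k+1}-\tilde D_k$) is generator representable semiseparable. Both the ``add a pole at $\infty$'' squaring trick and the ``invert $J_{k+1}-\hat D_k$ and recognise a tridiagonal'' step are absent from your outline and are what make the proof work.
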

\begin{proof}
	Using $\xi_{k+1}=\infty$, from Theorem~\ref{thm:symmetric-RAD} we obtain the relation
	\begin{equation}\label{1}
		AQ_{k+2}\underline K_{k+1}=Q_{k+2}\underline H_{k+1},
	\end{equation}
	where $\underline K_{k+1}$ and $\underline H_{k+1}$ are tridiagonal, $\vec e_{k+2}^T\underline K_{k+1}=\vec 0^T$ and $	\vec e_{k+2}^T\underline H_{k+1}$ is a multiple of $\vec e_{k+1}^T$.
	Hence, multiplying \eqref{1} on the left by $Q_{k+2}^T$ and taking the first $k+1$ columns and rows, we have  $$J_{k+1}=H_{k+1}K_{k+1}^{-1}.$$


	Let us define $\hat{D}_k=\diag(\gamma, \xi_1,\dots, \xi_k)$, for $\gamma\in \R$.  Notice that, since the first column of $Q_{k+1}$ is not an eigenvector of $A$, the entry in position (1,2) of $J_{k+1}$ has to be different from 0. From this it can be noticed that $J_{k+1}-\tilde D_k$ is a symmetric generator representable semiseparable matrix if and  only if $J_{k+1}-\hat D_k$ is. Moreover, taking $\gamma \neq h_{1,1}$ and $\gamma \neq 0$  we have that $J_{k+1}-\hat D_k$ is invertible and its inverse can be computed as follows:
	\begin{equation*}
		\begin{aligned}
			\left(H_{k+1}K_{k+1}^{-1}-\hat D_k\right)^{-1} & =\left(-\hat D_k(K_{k+1}-\hat D_k^{-1}H_{k+1})K_{k+1}^{-1}\right)^{-1}= \\
			                                               & =-K_{k+1}(K_{k+1}-\hat D_k^{-1}H_{k+1})^{-1}\hat D_k^{-1}.
		\end{aligned}
	\end{equation*}
	Since $K_{k+1}={I}_{k+1}+D_k H_{k+1}$  where $D_k=\diag(0,\frac{1}{\xi_1},\dots,\frac{1}{\xi_k}),$ we have
	\begin{equation*}
		\begin{aligned}
			\left(H_{k+1}K_{k+1}^{-1}-\hat D_k\right)^{-1} & =-K_{k+1}({I}_{k+1}+(D_k-\hat D_k^{-1})H_{k+1})^{-1}\hat D_k^{-1}                     \\
			                                               & =-K_{k+1}({I}_{k+1}-\frac1 \gamma \vec e_1 \vec e_1^TH_{k+1})^{-1}\hat D_k^{-1}       \\
			                                               & =-K_{k+1}({I}_{k+1}+\frac{1}{\gamma-h_{1,1}}\vec e_1 \vec e_1^TH_{k+1})\hat D_k^{-1}= \\
			                                               & =-(K_{k+1}+\frac{1}{\gamma-h_{1,1}}K_{k+1}(\vec e_1 \vec e_1^T)H_{k+1})\hat D_k^{-1}.
		\end{aligned}
	\end{equation*}
	The third equality follows from the Sherman-Morrison formula, using the fact that $\gamma \ne h_{1,1}$. This also shows that the matrix $J_{k+1} - \hat D_k$ is indeed invertible.

	The obtained matrix is an irreducible tridiagonal matrix since $K_{k+1}$ and $H_{k+1}$ have such structure and $\gamma \ne 0$. Hence, using Theorem~\ref{thm:inverse-semiseparable}, we have that $J_{k+1}-\hat D_k$ is a generator representable semiseparable matrix, and therefore also $J_{k+1} - \tilde D_k$ is.
\end{proof}

The following corollary generalizes the statement of Theorem~\ref{thm:semiseparable-projection} to the case with poles at $\infty$.

\begin{corollary}
	If there exists a pole $\xi_i=\infty$, the projected matrix $J_{k+1}=Q_{k+1}^TAQ_{k+1}$ is still a quasiseparable matrix.
\end{corollary}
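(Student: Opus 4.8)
The plan is to keep the setup of Theorem~\ref{thm:semiseparable-projection} but to relax the assumption on the poles to ``$\xi_1,\dots,\xi_k$ nonzero'' (some possibly $\infty$), and to notice that its proof splits into a part that is insensitive to the choice of poles and a part that is not. The pole-insensitive part produces the factorization $J_{k+1}=H_{k+1}K_{k+1}^{-1}$ and uses nothing about the poles beyond what Theorem~\ref{thm:symmetric-RAD} guarantees for nonzero poles. The obstruction is the second part, which introduces $\hat D_k=\diag(\gamma,\xi_1,\dots,\xi_k)$ and relies on $\hat D_k^{-1}$, which ceases to exist as soon as some $\xi_i=\infty$. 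So I would reuse the factorization and supply a different ending.

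Concretely, appending the artificial pole $\xi_{k+1}=\infty$ and applying Theorem~\ref{thm:symmetric-RAD} exactly as in the proof of Theorem~\ref{thm:semiseparable-projection} still gives $AQ_{k+2}\underline K_{k+1}=Q_{k+2}\underline H_{k+1}$ with $\underline H_{k+1}$ of full column rank, $\underline H_{k+1}$ and $\underline K_{k+1}$ tridiagonal, and the last row of $\underline K_{k+1}$ equal to zero; projecting by $Q_{k+2}^T$ and restricting to the leading $(k+1)\times(k+1)$ block yields
\[
	J_{k+1}=H_{k+1}K_{k+1}^{-1},\qquad K_{k+1}=I_{k+1}+D_kH_{k+1},\qquad D_k=\diag\!\Big(0,\tfrac1{\xi_1},\dots,\tfrac1{\xi_k}\Big),
\]
where the only effect of the infinite poles is to place extra zeros on the diagonal of $D_k$. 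Two short checks remain: $K_{k+1}$ is tridiagonal, being $I_{k+1}$ plus the product of a diagonal and a tridiagonal matrix; and $K_{k+1}$ is invertible, since $K_{k+1}\vec x=0$ would give $Q_{k+2}\underline H_{k+1}\vec x=AQ_{k+2}\underline K_{k+1}\vec x=0$ (using that the last row of $\underline K_{k+1}$ vanishes), contradicting the full column rank of $\underline H_{k+1}$ unless $\vec x=0$. Theorem~\ref{thm:inverse-semiseparable} then tells us $K_{k+1}^{-1}$ is semiseparable; note it need not be generator representable, consistent with the corollary claiming only quasiseparability.

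The key remaining step — and the only genuinely new ingredient — is a short lemma: the product $TS$ of a tridiagonal matrix $T$ and a semiseparable matrix $S$ is quasiseparable. I would prove it directly from Definitions~\ref{def:semiseparable-matrix} and~\ref{def:quasiseparable-matrix}: for fixed $i$ and for $a\ge i+1$, $b\le i$, the entry $(TS)_{ab}=T_{a,a-1}S_{a-1,b}+T_{aa}S_{ab}+T_{a,a+1}S_{a+1,b}$ only involves entries of the block $S(i:n,1:i)$, which has rank at most $1$; writing that block as $\vec u\vec v^T$ on the relevant indices gives $(TS)_{ab}=\tilde u_a\,v_b$ with $\tilde u_a=T_{a,a-1}u_{a-1}+T_{aa}u_a+T_{a,a+1}u_{a+1}$, so $(TS)(i+1:n,1:i)$ has rank at most $1$. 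The same computation with the rank-$\le 1$ block $S(1:i+1,i+1:n)$ shows $(TS)(1:i,i+1:n)$ has rank at most $1$, and quasiseparability follows. Taking $T=H_{k+1}$ and $S=K_{k+1}^{-1}$ finishes the corollary. I expect this lemma to be the main hurdle, although it is short; as a byproduct it also yields a quicker alternative proof of the quasiseparability part of Theorem~\ref{thm:semiseparable-projection}.

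As a cross-check, the same conclusion can be reached by a continuity argument: perturb each infinite pole to a large finite value (staying below the invariance index and away from $\sigma(A)$), apply Theorem~\ref{thm:semiseparable-projection}, and let the perturbation vanish, using that the rational Arnoldi basis depends continuously on the poles up to signs and that the quasiseparable matrices form a closed set, being the common zero locus of the $2\times 2$ minors of finitely many off-diagonal blocks. I would nonetheless present the direct multiplicative argument above, which avoids discussing continuity of the basis.
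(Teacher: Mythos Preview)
Your argument is correct and takes a genuinely different route from the paper. The paper proves the corollary by a limiting argument: it replaces each infinite pole by a finite pole $\xi^{(j)}$ tending to $\infty$, invokes Theorem~\ref{thm:semiseparable-projection} to get quasiseparability of the perturbed projected matrices $J_{k+1}^{(j)}$, and then passes to the limit using continuous dependence of the rational Arnoldi basis on the poles and closedness of the set of quasiseparable matrices. Your proof instead stays entirely algebraic: you reuse the factorization $J_{k+1}=H_{k+1}K_{k+1}^{-1}$ with $K_{k+1}$ tridiagonal and invertible (which, as you correctly point out, survives verbatim when some $\xi_i=\infty$), apply Theorem~\ref{thm:inverse-semiseparable} to make $K_{k+1}^{-1}$ semiseparable, and supply the missing lemma that a tridiagonal times a semiseparable matrix is quasiseparable. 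Your lemma is elementary and its proof is clean; the observation that the three $S$-entries appearing in $(TS)_{ab}$ all sit inside the single rank-$1$ block $S(i{:}n,1{:}i)$ (respectively $S(1{:}i{+}1,i{+}1{:}n)$) is exactly the point. The paper's approach is shorter to state but leans on two analytic facts (continuity of the basis, closedness of quasiseparables) that it does not prove; your approach is self-contained and, as you note, incidentally gives a more direct proof of the quasiseparability conclusion of Theorem~\ref{thm:semiseparable-projection} as well. Amusingly, the cross-check you append at the end \emph{is} the paper's proof.
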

\begin{proof}
	Let $\{\xi^{(j)}\}_{j\in \mathbb{N}}$ be a sequence of real numbers outside of the convex hull of $\sigma(A)$ such that
	$$\lim_{j\rightarrow \infty}\xi^{(j)}=\infty,$$
	and let $J_{k+1}^{(j)}$ be the projected matrix obtained by using poles equal to $$\{\xi_1,\dots,\xi_{i-1},\xi^{(j)},\xi_{i+1},\dots,\xi_k\}.$$
	Since the basis computed by the rational Arnoldi process depends continuously on the poles, we have that
	$$\lim_{j\rightarrow \infty}J_{k+1}^{(j)}=J_{k+1}.$$

	From Theorem~\ref{thm:semiseparable-projection} we have that the matrices $J_{k+1}^{(j)}$ are quasiseparable for all $j$. Since the quasiseparable matrices are a closed set \cite[Section~1.4.1]{vandebril2007matrix}, we have the thesis.
\end{proof}

Notice that, if the matrix $A$ is symmetric positive semidefinite and $k+1$ is less than the invariance index, the projected matrix $J_k=Q_k^TAQ_k$ has to be positive definite. Indeed, if there exists a vector $\vec x\neq \vec 0$ such that $J_k\vec x = \vec 0$, we have that $AQ_k \vec x = \vec 0$. In particular, since $Q_k \vec x\in q_{k-1}(A)^{-1}\kryl_k(A,b)$, where $q_{k-1}$ is as defined in Section~\ref{sec-rat-kryl-methods}, there exist $\alpha_0,\dots,\alpha_{j}$, $j \le k-1$, with $\alpha_j \ne 0$ such that
\begin{equation*}
	Q_k \vec x=q_{k-1}(A)^{-1} \sum_{i=0}^{j}\alpha_i A^i \vec b,
\end{equation*}
and so
\begin{equation*}
	\vec 0 = AQ_k \vec x = q_{k-1}(A)^{-1}\sum_{i=0}^{j}\alpha_i A^{i+1}\vec b.
\end{equation*}
This implies that $A^{j+1} \vec b \in \kryl_j(A, \vec b)$, but this is impossible because $k+1$ is less than the invariance index. In particular, this implies the existence of the Cholesky factorization of the matrix $J_k$ if the matrix $A$ is symmetric positive semidefinite.

The matrix $B_k = P_k^T A Q_k$ we are interested in when using rational Krylov methods for GMFs is exactly the transpose of the Cholesky factor of the matrix $J_k = Q_k^T (A^TA) Q_k$. Indeed $B_k$ is upper triangular and
$$J_k=(AQ_k)^TAQ_k=(P_kB_k)^T(P_kB_k)=B_k^TB_k.$$
The following proposition gives us that the matrix $B_k$ is the upper triangular part of a rank one plus diagonal matrix.
\begin{proposition}Let $A\in\R^{n\times n}$ be symmetric positive definite and let $L\in \R^{n\times n}$ be its Cholesky factor (i.e., $L$ is lower triangular and $A=LL^T$).
	If there exist $\vec u ,\vec v\in \R^{n}$ such that $\tril(A,-1)=\tril(\vec v\vec u^T,-1)$, then $\tril(L,-1)=\tril(\vec v\vec x^T,-1)$ for $\vec x\in \R^{n}$.
\end{proposition}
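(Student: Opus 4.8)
The plan is to argue by induction on the column index of $L$, using the classical Cholesky recurrence
\begin{equation*}
	\ell_{jj} = \Bigl( a_{jj} - \sum_{k=1}^{j-1} \ell_{jk}^2 \Bigr)^{1/2}, \qquad \ell_{ij} = \frac{1}{\ell_{jj}} \Bigl( a_{ij} - \sum_{k=1}^{j-1} \ell_{ik} \ell_{jk} \Bigr) \quad \text{for } i > j,
\end{equation*}
which is well defined since $A$ positive definite forces $\ell_{jj} > 0$ for every $j$. Writing $v_i$ and $u_j$ for the entries of $\vec v$ and $\vec u$, the hypothesis says $a_{ij} = v_i u_j$ whenever $i > j$. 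I would prove the following claim: for every $j$ there is a scalar $x_j$ with $\ell_{ij} = v_i x_j$ for all $i > j$. Collecting the $x_j$ into $\vec x = (x_1, \dots, x_n)^T$ then yields $\tril(L,-1) = \tril(\vec v \vec x^T, -1)$, which is exactly the claimed form.

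For $j = 1$ one simply reads off $\ell_{i1} = a_{i1}/\ell_{11} = v_i (u_1/\ell_{11})$ for $i > 1$, so $x_1 = u_1/\ell_{11}$ works. For the inductive step, assume the claim holds for columns $1, \dots, j-1$ and fix $i > j$. The key point is that every factor appearing in the update of $\ell_{ij}$ already carries an explicit $v$-component: for $k \in \{1, \dots, j-1\}$ we have $i > k$ and $j > k$, so by the inductive hypothesis $\ell_{ik} = v_i x_k$ and $\ell_{jk} = v_j x_k$, while $a_{ij} = v_i u_j$ because $i > j$. Hence
\begin{equation*}
	a_{ij} - \sum_{k=1}^{j-1} \ell_{ik}\ell_{jk} = v_i u_j - v_i v_j \sum_{k=1}^{j-1} x_k^2 = v_i\Bigl( u_j - v_j \sum_{k=1}^{j-1} x_k^2 \Bigr),
\end{equation*}
and dividing by $\ell_{jj}$ gives $\ell_{ij} = v_i x_j$ with $x_j = \bigl( u_j - v_j \sum_{k=1}^{j-1} x_k^2 \bigr)/\ell_{jj}$, a quantity that does not depend on $i$. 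This closes the induction, and hence the proof.

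I do not expect a genuine obstacle here: the only thing that needs care is the bookkeeping of strict inequalities, namely checking that in the Cholesky update for $\ell_{ij}$ with $i > j$ all the entries $\ell_{ik}$ and $\ell_{jk}$ that occur are strictly below the diagonal, so that the inductive hypothesis may be invoked and the common factor $v_i$ (respectively $v_j$) pulled out. An alternative route would be to show that a single step of the Cholesky factorization preserves the rank-one structure of the strictly lower triangular part by passing to the Schur complement, but this amounts to the same computation, and the column-by-column induction above seems the most direct way to present it.
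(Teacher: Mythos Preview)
Your proof is correct. The paper also argues by induction but proceeds in the opposite direction: it observes that the last row of $\tril(L,-1)$ equals
\[
v_n\,[u_1,\dots,u_{n-1},0]\begin{bmatrix} L_{n-1}^{-T} & \\ & 1 \end{bmatrix},
\]
which comes from the block relation $\vec\ell^T L_{n-1}^T = \vec a^T$ for the last row, and then recurses on the leading principal submatrix $A_{n-1}$ (whose Cholesky factor is $L_{n-1}$ and which inherits the same rank-one structure). So the paper peels off rows from the bottom using a block formula, whereas you build up columns from the left using the scalar Cholesky recurrence. Your route is more explicit and self-contained, and it yields a concrete recursion $x_j = \bigl(u_j - v_j\sum_{k<j} x_k^2\bigr)/\ell_{jj}$ for the entries of $\vec x$; the paper's route gives instead the closed-form expression $\vec x^T = [u_1,\dots,u_{n-1},0]\,\diag(L_{n-1}^{-T},1)$ but leaves the consistency across rows to the reader. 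The two arguments are equivalent in content.
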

\begin{proof}
	It can be easily proved that the last row of $\tril(L,-1)$ is equal to
	$$v_n\cdot \begin{bmatrix}
			u_1 & \dots & u_{n-1} & 0
		\end{bmatrix}\begin{bmatrix}
			L_{n-1}^{-T} & \\&1
		\end{bmatrix},$$
	where $L_{n-1}$ is the leading principal submatrix of $L$ of size $n-1$. Using this fact the thesis can be proved recursively.
\end{proof}

\begin{remark}\label{rmk:nonzero_v}
	Notice that if $\triu(J_k,1)=\triu(\vec u \vec v^T, 1)$, the vector $\vec v$ cannot have zero entries: indeed if there exists $s\le k$ such that $v_s=0$, then, as a consequence of the proof of Theorem \ref{thm:semiseparable-projection}, the matrix $J_s-\diag(\gamma,\xi_1,\dots,\xi_{s-1})$ has the last column equal to zero for all $\gamma\in \R$, but this is impossible since for an appropriate choice of~$\gamma$ this matrix has to be invertible.
\end{remark}

Exploiting the quasiseparable structure of the matrix $B_k$, we can compute the matrices $B_k$ and $P_k$ by only performing a few scalar products. Indeed, if we let

\begin{equation*}
	B_k=\begin{bmatrix}
		d_1 & \beta_1  & \gamma_1                                         \\
		    & d_2      & \beta_2  & \gamma_2 & \bignonz                   \\
		    &          & \ddots   & \ddots   & \ddots                     \\
		    & \bigzero &          & d_{k-2}  & \beta_{k-2} & \gamma_{k-2} \\
		    &          &          &          & d_{k-1}     & \beta_{k-1}  \\
		    &          &          &          &             & d_{k}        \\
	\end{bmatrix},
\end{equation*}
and we define $\vec x_k = [P_{k-1} \, \vec 0 ] B_k \vec e_k$, we have that
\begin{equation}\label{eqn:structured-representation_B}
	A\vec q_k = AQ_k \vec e_k = P_kB_k \vec e_k = d_k\vec p_k + \vec x_k.
\end{equation} 

Using the fact that the submatrix of $B_k$ that involves the last two columns and all except for the last two rows has rank at most 1, we can compute $\vec x_k$ with the recursive relation 
\begin{equation*}
\vec x_k = \dfrac{\gamma_{k-2}}{\beta_{k-2}} \vec x_{k-1} + \beta_{k-1} \vec p_{k-1}.
\end{equation*}

 This allows us to compute $d_k, \beta_{k-1}, \gamma_{k-2}$ and $\vec p_k$ with only two scalar products. The $k$-th step of the procedure is summarized in Algorithm~\ref{algorithm:rational-Golub-Kahan}.

\begin{algorithm}
	\KwInput{$A,\vec q_k, \vec p_{k-1}, \vec p_{k-2}, \vec x_{k-1};$}
	\KwOutput{ $\vec p_k, d_k, \beta_{k-1}, \gamma_{k-2}, \vec x_k;$}
	$\vec w \gets A\vec q_k$\\
	$\beta_{k-1} \gets \vec w^T\vec p_{k-1}$\\
	$\gamma_{k-2} \gets \vec w^T\vec p_{k-2}$\\
	$\vec x_k \gets \frac{\gamma_{k-2}}{\beta_{k-2}}\vec x_{k-1}+\beta_{k-1}\vec p_{k-1}$;\\
	$\vec w \gets \vec w-\vec x_k$;\\
	$d_k\gets\norm{\vec w}_2$\\
	$\vec p_k=\frac{\vec w}{d_k}$;
	\caption{$k$-th step of rational Golub-Kahan algorithm} \label{algorithm:rational-Golub-Kahan}
\end{algorithm}

Notice that during the $k$-th step of the procedure, we do not require the first $k-1$ columns of $Q_k$. Moreover, for the computation of the projected solution $\barvec y_k$ defined in \eqref{eqn:gmf-approx-rational}, we do not need the matrix $Q_k$. For this reason, the computation of the matrix $Q_k$ can be performed by using a short recurrence rational Lanczos algorithm, as the one presented in
\cite[Section~5.2]{GuettelThesis}, and we can keep in memory only the last two columns of $Q_k$.
After the $k$-th step of the algorithm, the $k$-th column of the matrix $B_k$ can be computed using the newly computed quantities and the previous column, by exploting the rank structure of the matrix $B_k$.

Algorithm~\ref{algorithm:rational-Golub-Kahan} reduces to the standard Golub-Kahan bidiagonalization if all the poles are chosen equal to infinity.

\begin{remark}
	In the algorithm it is implicitly assumed that $\beta_i\neq 0$ for each $i$. In practice this hypothesis is always satisfied, however, as observed in Remark~\ref{rmk:nonzero_v}, if $k$ is less than the invariance index there is at least one nonzero off-diagonal entry in the $k$-th column of $B_k$. Hence we could modify the algorithm to avoid the issue of $\beta_i=0$.
\end{remark}
\begin{remark}The algorithm presented in this section also works if some of the poles are equal to zero. However, the proof of this fact requires slightly different tools, and hence we omitted it for brevity.
\end{remark}

\section{Error bounds}
\label{sec:error-bounds}
In this section we prove some error bounds for the approximation of $f^\diamond(A) \vec b$ using the polynomial and rational Krylov methods described above. These bounds link the approximation error with the error of polynomial and rational approximation of $f$ on an interval containing the singular values of $A$. Our results are the analogue of the ones that hold for standard matrix functions, and they can be proved in a similar way.

We first find an upper and lower bound for the singular values of $B_k$. For convenience, given a matrix $A \in \R^{m \times n}$, throughout this section we are going to use an extended notation for singular values, defining $\sigma_j := 0$ for all $j$ such that $\min\{m,n\} < j \le \max \{m,n\}$.

\begin{lemma}
	\label{lemma:interlacing-singular-values}
	Let $\sigma_1$ and $\sigma_n$ be the first and $n$-th singular value of $A \in \R^{m \times n}$, respectively. Then the singular values of $B_k$ belong to the interval $[ \sigma_n, \sigma_1 ]$.
\end{lemma}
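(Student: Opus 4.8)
The plan is to build the argument on the single structural identity that defines $B_k$ in the (rational) Golub--Kahan construction, namely $A Q_k = P_k B_k$ with $B_k = P_k^T A Q_k$ and $P_k$ obtained by orthonormalizing the columns of $A Q_k$, together with the orthonormality of the columns of $P_k$ and $Q_k$. Everything then reduces to elementary estimates (or, equivalently, to Cauchy interlacing).

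First I would note that, since $P_k^T P_k = I_k$, multiplying $A Q_k = P_k B_k$ on the left by its transpose yields
\begin{equation*}
	B_k^T B_k = (P_k B_k)^T (P_k B_k) = (A Q_k)^T (A Q_k) = Q_k^T (A^T A) Q_k .
\end{equation*}
Hence the squared singular values of $B_k$ are precisely the eigenvalues of the compression $Q_k^T (A^T A) Q_k$ of the symmetric positive semidefinite matrix $A^T A \in \R^{n \times n}$, whose eigenvalues are $\sigma_1^2 \ge \dots \ge \sigma_n^2$ in the extended notation introduced above. By the Cauchy interlacing (Poincar\'e separation) theorem, since $Q_k$ has orthonormal columns every eigenvalue of $Q_k^T (A^T A) Q_k$ lies in $[\lambda_{\min}(A^T A), \lambda_{\max}(A^T A)] = [\sigma_n^2, \sigma_1^2]$, and taking square roots gives the claim.

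If one prefers to avoid quoting interlacing, the two bounds follow directly: for any unit vector $\vec x \in \R^k$ one has $\norm{Q_k \vec x}_2 = \norm{\vec x}_2 = 1$, so on the one hand $\norm{B_k \vec x}_2 = \norm{P_k^T A Q_k \vec x}_2 \le \norm{A Q_k \vec x}_2 \le \sigma_1$, and on the other hand, using that $P_k$ acts isometrically on its column space (which contains $A Q_k \vec x$) and that $\norm{A \vec y}_2 \ge \sigma_n \norm{\vec y}_2$ for every $\vec y \in \R^n$, one gets $\norm{B_k \vec x}_2 = \norm{A Q_k \vec x}_2 \ge \sigma_n$. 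Taking the maximum and minimum over unit vectors $\vec x$ shows $\sigma_n \le \sigma_{\min}(B_k) \le \sigma_{\max}(B_k) \le \sigma_1$.

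There is no real obstacle in this proof; it is short and self-contained once the relation $A Q_k = P_k B_k$ is in place. The only point requiring a little care is the extended singular value convention: when $m < n$ (or, more generally, when $A$ is column rank deficient) one has $\sigma_n = 0$ and the lower bound is vacuous, but the inequality $\norm{A \vec y}_2 \ge \sigma_n \norm{\vec y}_2$ remains valid in every case, so the argument goes through uniformly.
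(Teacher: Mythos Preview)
Your proof is correct and follows essentially the same route as the paper: both arguments reduce to showing that the eigenvalues of $B_k^T B_k$ coincide with those of $Q_k^T (A^T A) Q_k$ and then invoke the variational characterization (equivalently, Cauchy interlacing). The only cosmetic difference is that you reach the identity $B_k^T B_k = Q_k^T A^T A Q_k$ directly from $A Q_k = P_k B_k$, whereas the paper starts from $B_k^T B_k = Q_k^T A^T P_k P_k^T A Q_k$ and removes the projector $P_k P_k^T$ by appealing to the Krylov inclusion $A\,\rat_k(A^T A,\vec b) \subset \rat_k(AA^T, A\vec b)$; your shortcut is a bit cleaner and makes the argument independent of the Krylov structure.
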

\begin{proof}
	Let $\eta_1$ and $\eta_k$ be the largest and smallest singular value of $B_k$, respectively. Using the variational characterization of eigenvalues, we have
	\begin{equation*}
		\eta_k^2 = \min_{\substack{\vec v \in \R^k \\ \norm{\vec v}_2 = 1}} \vec v^T B_k^T B_k \vec v = \min_{\substack{\vec v \in \R^k \\ \norm{\vec v}_2 = 1}} \vec v^T Q_k^T A^T P_k P_k^T A Q_k \vec v.
	\end{equation*}
	Since the columns of $Q_k$ are an orthonormal basis of the subspace $\rat_k(A^TA, \vec b)$, we get
	\begin{equation*}
		\eta_k^2 = \min_{\substack{\vec w \in \rat_k(A^TA, \vec b) \\ \norm{\vec w}_2 = 1}} \vec w^T A^T P_k P_k^T A \vec w = \min_{\substack{\vec w \in \rat_k(A^TA, \vec b) \\ \norm{\vec w}_2 = 1}} \vec w^T A^T A \vec w,
	\end{equation*}
	where for the last equality we used the fact that $A\vec w \in \rat_k(AA^T, \vec A \vec b)$ and therefore $P_k P_k^T A \vec w = A \vec w$. Finally, we obtain
	\begin{equation*}
		\eta_k^2 = \min_{\substack{\vec w \in \rat_k(A^TA, \vec b) \\ \norm{\vec w}_2 = 1}} \vec w^T A^T A \vec w \ge \min_{\substack{\vec w \in \R^n \\ \norm{\vec w}_2 = 1}} \vec w^T A^T A \vec w = \lambda_{\min} (A^TA) = \sigma_n^2.
	\end{equation*}
	With the same procedure, we also get
	\begin{equation*}
		\eta_1^2 = \max_{\substack{\vec w \in \rat_k(A^TA, \vec b) \\ \norm{\vec w}_2 = 1}} \vec w^T A^T A \vec w \le \max_{\substack{\vec w \in \R^n \\ \norm{\vec w}_2 = 1}} \vec w^T A^T A \vec w = \lambda_{\max} (A^TA) = \sigma_1^2.
	\end{equation*}
\end{proof}

Observe that if $A \in \R^{m \times n}$ is rectangular with $n >m$, we always have $\sigma_n = 0$, and hence $B_k$ may have singular values arbitrarily close to $0$ even if $\sigma_{\min\{m,n\}}(A) > 0$.
This fact is going to affect the error bounds in Theorem~\ref{thm:polynomial-krylov-error-bound} and Theorem~\ref{thm:rational-krylov-error-bound}.

As an example, consider the $1 \times 2$ matrix $A = \begin{bmatrix}
		1 & 0 \\
	\end{bmatrix}$ and the vector $\vec b = \begin{bmatrix}
		\epsilon \\
		1
	\end{bmatrix}$, for small $\epsilon > 0$. For $k = 1$, we have $Q_1 = \vec b /\norm{ \vec b}_2 = \frac{1}{\sqrt{1 + \epsilon^2}} \vec b$, and $P_1 = A \vec b / \norm{A \vec b}_2 = 1$. So we have $B_1 = P_1^T A Q_1 = \epsilon \in \R^{1 \times 1}$, and hence $B_1$ can have an arbitrarily small singular value even if $\sigma_1(A) = 1$.

\subsection{Polynomial error bounds}
\label{subsubsec:pol-err-bounds}
We first prove the error bounds in the polynomial case. Recall that a polynomial Krylov method computes an approximation to $\vec y = f^\diamond(A) \vec b$ from the subspace $\kryl_k(A A^T, A \vec b)$ as
\begin{equation}
	\label{eqn:gmf-approx-polynomial--bounds}
	\barvec y_k = P_k f^\diamond(B_k) Q_k^T \vec b = \norm{\vec b}_2 P_k f^\diamond(B_k) \vec e_1,
\end{equation}
where $B_k = P_k^T A Q_k$, and $P_k$ and $Q_k$ are the matrices computed in the Golub-Kahan bidiagonalization of $A$, satisfying $\vspan(P_k) = \kryl_k(A A^T, A \vec b)$ and $\vspan(Q_k) = \kryl_k(A^T A, \vec b)$.

A key observation for proving the bounds is the exactness of the approximation~\eqref{eqn:gmf-approx-polynomial--bounds} when $f$ is an odd polynomial, stated in the following lemma.

\begin{lemma}
	\label{lemma:polynomial-krylov-exactness}
	Assume that $f = p_{2k-1}$ is an odd polynomial of degree $\le 2k-1$. Then the approximation $\barvec y_k$ given by \eqref{eqn:gmf-approx-polynomial--bounds} is exact, i.e., it holds $\vec y = \barvec y_k$.
\end{lemma}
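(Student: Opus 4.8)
The plan is to reduce the statement about the generalized matrix function $f^\diamond(A)\vec b$ to a statement about a standard matrix function of $AA^T$, using Proposition~\ref{prop:gmf-polynomial-evaluation}, and then invoke the well-known exactness of the (standard) polynomial Lanczos/Arnoldi approximation for polynomials of sufficiently low degree. Since $f = p_{2k-1}$ is odd, we can write $p_{2k-1}(z) = q_{k-1}(z^2)z$ for some polynomial $q_{k-1}$ of degree $\le k-1$. By Proposition~\ref{prop:gmf-polynomial-evaluation} applied to the matrix $A$ we then have
\begin{equation*}
	\vec y = f^\diamond(A)\vec b = p_{2k-1}^\diamond(A)\vec b = q_{k-1}(AA^T) A \vec b,
\end{equation*}
and applying the same proposition to the (small) matrix $B_k$ gives $f^\diamond(B_k) = q_{k-1}(B_kB_k^T) B_k$, hence
\begin{equation*}
	\barvec y_k = P_k f^\diamond(B_k) Q_k^T \vec b = P_k q_{k-1}(B_kB_k^T) B_k Q_k^T \vec b.
\end{equation*}

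Next I would rewrite everything in terms of $AA^T$ and the subspace $\kryl_k(AA^T, A\vec b)$. The key structural facts are the Golub-Kahan relations $AQ_k = P_kB_k$ and $A^TP_k = Q_kB_k^T + \vec s_k \vec e_k^T$, which give $B_k^TB_k = Q_k^T A^T A Q_k$ and, more usefully here, $B_kB_k^T = P_k^T A Q_k Q_k^T A^T P_k$. The cleaner route is to observe that $P_k^T (AA^T) P_k = B_k B_k^T$ is exactly the Lanczos projection of the symmetric matrix $AA^T$ onto $\kryl_k(AA^T, A\vec b)$: indeed $\vspan(P_k) = \kryl_k(AA^T, A\vec b)$ by the identities recalled after Theorem~\ref{thm:householder-bidiagonalization}, so $B_kB_k^T$ is the compression of $AA^T$ to this Krylov subspace and $P_k$ its orthonormal basis with $P_k^T (A\vec b) = \norm{A\vec b}_2 \vec e_1$ (up to the usual normalization; note $A\vec b = A Q_1 \norm{\vec b}_2 = \alpha_1 \norm{\vec b}_2 \vec p_1$). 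Then $\barvec y_k = P_k q_{k-1}(B_kB_k^T) P_k^T (A\vec b)$ is precisely the standard polynomial Krylov approximation of $q_{k-1}(AA^T)(A\vec b)$ from $\kryl_k(AA^T, A\vec b)$, which is exact because $q_{k-1}$ has degree $\le k-1$ — this is the classical fact that $q(M)\vec c = V_k q(V_k^TMV_k)V_k^T\vec c$ for any $\deg q \le k-1$, symmetric $M$, with $V_k$ an orthonormal Krylov basis of $\kryl_k(M,\vec c)$. Combining, $\barvec y_k = q_{k-1}(AA^T)(A\vec b) = \vec y$.

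I expect the only real subtlety to be bookkeeping: one must be careful that the factor $B_k$ sitting between $q_{k-1}(B_kB_k^T)$ and $Q_k^T\vec b$ is handled correctly, i.e. that $B_k Q_k^T \vec b$ genuinely equals $P_k^T (A\vec b)$, which follows from $AQ_k = P_kB_k$ and $Q_k^T\vec b = \norm{\vec b}_2\vec e_1$ together with $P_k^TP_k = I$. An alternative, perhaps slicker, argument avoiding the $q_{k-1}$ substitution is to use Proposition~\ref{prop:gmf-general-equivalence}: writing $g(z) = f(\sqrt z)/\sqrt z$ (a polynomial of degree $\le k-1$ in $z$ since $f$ is odd of degree $\le 2k-1$), we get $f^\diamond(A)\vec b = g(AA^T)(A\vec b)$ and $f^\diamond(B_k) = g(B_kB_k^T)B_k$, and then the same Krylov exactness for the polynomial $g$ of degree $\le k-1$ finishes the proof. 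I would present whichever of these two equivalent formulations is most consistent with the notation already fixed in the paper; the mathematical content — degree $\le 2k-1$ odd polynomial $\leftrightarrow$ degree $\le k-1$ polynomial in the squared variable $\leftrightarrow$ exactness of $k$-dimensional Krylov approximation for $AA^T$ — is the heart of the matter and involves no hard estimate.
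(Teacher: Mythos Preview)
Your overall strategy --- reduce to the classical polynomial-exactness of the standard Krylov approximation --- is valid and is a different, more conceptual route than the paper's proof, which instead works directly by induction on the monomials $z^{2\ell-1}$, peeling off one factor of $B_kB_k^T$ at a time via the projector identities $Q_kQ_k^T\vec v=\vec v$ and $P_kP_k^TA\vec v=A\vec v$ for $\vec v$ in the appropriate Krylov spaces.

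However, one step in your execution is wrong: the claim $P_k^T(AA^T)P_k=B_kB_k^T$ is false. You correctly wrote $B_kB_k^T=P_k^TAQ_kQ_k^TA^TP_k$, but then implicitly dropped the middle projector $Q_kQ_k^T$. From $A^TP_k=Q_kB_k^T+\beta_k\vec q_{k+1}\vec e_k^T$ and $A\vec q_{k+1}=\alpha_{k+1}\vec p_{k+1}+\beta_k\vec p_k$ one computes
\[
P_k^TAA^TP_k \;=\; B_kB_k^T+\beta_kP_k^TA\vec q_{k+1}\vec e_k^T \;=\; B_kB_k^T+\beta_k^2\,\vec e_k\vec e_k^T,
\]
so $B_kB_k^T$ is \emph{not} the Lanczos projection of $AA^T$ onto $\kryl_k(AA^T,A\vec b)$; it differs from it in the $(k,k)$ entry.

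The cleanest fix is already in your hands: use the identity $B_k^TB_k=Q_k^T(A^TA)Q_k$, which you noted and which \emph{does} hold exactly (since $AQ_k=P_kB_k$ with no remainder). Writing $p_{2k-1}^\diamond(B_k)=B_k\,q_{k-1}(B_k^TB_k)$ via the other half of Proposition~\ref{prop:gmf-polynomial-evaluation} gives
\[
\barvec y_k \;=\; P_kB_k\,q_{k-1}(B_k^TB_k)\,Q_k^T\vec b \;=\; AQ_k\,q_{k-1}\bigl(Q_k^TA^TAQ_k\bigr)\,Q_k^T\vec b \;=\; A\,q_{k-1}(A^TA)\vec b \;=\; \vec y,
\]
where the third equality is precisely the classical exactness for $\kryl_k(A^TA,\vec b)$ with $\deg q_{k-1}\le k-1$. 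Alternatively, one can salvage your $AA^T$-side argument by observing that the extra term $\beta_k^2\vec e_k\vec e_k^T$ lies in the $(k,k)$ position of a tridiagonal matrix and hence does not affect $q(B_kB_k^T)\vec e_1$ for $\deg q\le k-1$; but the $A^TA$ side avoids the issue entirely.
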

\begin{proof}
	It is sufficient to prove this for $f(z) = z^{2\ell -1}$, $\ell = 1, \dots, k$. For $\ell = 1$, we have
	\begin{equation*}
		\vec y = A \vec b \quad \text{and} \quad \barvec y_k = P_k (P_k^T A Q_k) Q_k^T\vec b = A \vec b,
	\end{equation*}
	since $Q_kQ_k^T \vec b = \vec b$, and $P_k P_k^T A \vec b = A \vec b$, for all $k \ge 1$.

	For $\ell > 1$, we have by Proposition~\ref{prop:gmf-polynomial-evaluation} that
	\begin{equation}
		\label{eqn:proof-polynomial-exactness-1}
		\vec y = f^\diamond(A) = (AA^T)^{\ell-1} A \vec b
	\end{equation}
	and
	\begin{equation}
		\label{eqn:proof-polynomial-exactness-2}
		\barvec y_k = P_k f^\diamond(B_k) \vec e_1 = P_k (B_k B_k^T)^{\ell-1} B_k Q_k^T \vec b.
	\end{equation}
	Recalling the definition of $B_k$, we get
	\begin{equation*}
		\barvec y_k = P_k (B_kB_k^T)^{\ell-2} (P_k ^T A Q_k Q_k^T A^T P_k)(P_k^T A Q_k) Q_k^T \vec b.
	\end{equation*}
	We have $Q_k Q_k^T \vec b = \vec b$ since $\vec b \in \kryl_1(A^TA, \vec b)$, and similarly $P_kP_k^T A \vec b = A \vec b$ since $ A \vec b \in \kryl_1(AA^T, A \vec b)$. Hence, letting $\vec b_2 := A^TA \vec b \in \kryl_2(A^TA, \vec b)$, we obtain
	\begin{equation*}
		\barvec y_k = P_k (B_k B_k^T)^{\ell-2} (P_k^T A Q_k) Q_k^T \vec b_2,
	\end{equation*}
	which is the same situation as before, with $\ell$ replaced by $\ell-1$, and $\vec b \in \kryl_1(A^TA, \vec b)$ replaced by $\vec b_2 \in \kryl_2(A^TA, \vec b)$.

	Since $k \ge \ell$, we can repeat the above procedure until we are left with
	\begin{equation*}
		\barvec y_k = P_k (P_k^T A Q_k) Q_k^T \vec b_\ell, \qquad \text{where} \quad \vec b_\ell = (A^T A)^{\ell-1} \vec b \in \kryl_\ell(A^TA, \vec b).
	\end{equation*}
	Then, because $k \ge \ell$, we have that $Q_k Q_k^T \vec b_\ell = \vec b_\ell$, and likewise $P_kP_k^T A \vec b_\ell = A \vec b_\ell$, since $A \vec b_\ell \in \kryl_\ell(AA^T, A \vec b)$, so we obtain $\barvec y_k = A (A^TA)^{\ell-1} \vec b$. By comparing with \eqref{eqn:proof-polynomial-exactness-1}, we finally get $\vec y = \barvec y_k$.
\end{proof}

Using Lemma~\ref{lemma:polynomial-krylov-exactness}, we can prove the following theorem.
\begin{theorem}
	\label{thm:polynomial-krylov-error-bound}
	Let $A \in \R^{m \times n}$, and let $\sigma_1$, $\sigma_n$ and $\sigma_m$ be the first, $n$-th and $m$-th singular value of $A$, respectively. Let $\barvec y_k$ be the approximation to $\vec y = f^\diamond(A) \vec b$ given by \eqref{eqn:gmf-approx-polynomial--bounds}. Then the following inequality holds:
	\begin{equation}
		\label{eqn:gmf-polynomial-bound-1}
		\norm{f^\diamond(A) \vec b - \barvec y_k}_2 \le 2 \norm{\vec b}_2 \min_{p \in \poly_{k-1}} \norm{ f(z) - p(z^2)z }_{\infty, [\sigma_n, \sigma_1]}.
	\end{equation}
	Moreover, if $A$ is square with $\sigma_m = \sigma_n > 0$, or if $\displaystyle\lim_{z \to 0}\tfrac{f(z)}{z} = 0$, it also holds that
	\begin{equation}
		\label{eqn:gmf-polynomial-bound-2}
		\norm{f^\diamond(A) \vec b - \barvec y_k}_2 \le 2 \norm {A \vec b}_2 \min_{p \in \poly_{k-1}} \norm{f(z)/z - p(z^2)}_{\infty, [\sigma_{\max\{m,n\}}, \sigma_1]}.
	\end{equation}
\end{theorem}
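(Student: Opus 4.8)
The plan is to run the standard ``exactness on low-degree polynomials plus uniform approximation'' argument, with Lemma~\ref{lemma:polynomial-krylov-exactness} supplying the exactness and Lemma~\ref{lemma:interlacing-singular-values} controlling the singular values of $B_k$. As we may, assume $f$ is odd with $f(0)=0$. Fix an arbitrary $p \in \poly_{k-1}$ and set $r(z) = p(z^2)z$, an odd polynomial of degree $\le 2k-1$; by Lemma~\ref{lemma:polynomial-krylov-exactness} we have $r^\diamond(A)\vec b = P_k r^\diamond(B_k) Q_k^T \vec b$. Since a generalized matrix function acts only on the singular values, the map $f \mapsto f^\diamond(A)$ is linear, so $(f-r)^\diamond(A) = f^\diamond(A) - r^\diamond(A)$ and likewise for $B_k$, whence
\begin{equation*}
	f^\diamond(A)\vec b - \barvec y_k = (f-r)^\diamond(A)\vec b - P_k (f-r)^\diamond(B_k) Q_k^T \vec b .
\end{equation*}
It then suffices to estimate the two terms on the right for every choice of $p$ and minimize.

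For \eqref{eqn:gmf-polynomial-bound-1}: the nonzero singular values of $A$ lie in $[\sigma_{\min\{m,n\}},\sigma_1] \subseteq [\sigma_n,\sigma_1]$, and since $(f-r)(0)=0$ we get $\norm{(f-r)^\diamond(A)}_2 = \max_{\sigma_i>0}\abs{(f-r)(\sigma_i)} \le \norm{f-r}_{\infty,[\sigma_n,\sigma_1]}$. By Lemma~\ref{lemma:interlacing-singular-values} the singular values of $B_k$ also lie in $[\sigma_n,\sigma_1]$, so $\norm{(f-r)^\diamond(B_k)}_2 \le \norm{f-r}_{\infty,[\sigma_n,\sigma_1]}$ as well. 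Using $\norm{P_k}_2 = 1$ and $\norm{Q_k^T\vec b}_2 = \norm{\vec b}_2$, the triangle inequality gives $\norm{f^\diamond(A)\vec b - \barvec y_k}_2 \le 2\norm{\vec b}_2\norm{f(z)-p(z^2)z}_{\infty,[\sigma_n,\sigma_1]}$, and minimizing over $p$ yields \eqref{eqn:gmf-polynomial-bound-1}.

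For \eqref{eqn:gmf-polynomial-bound-2}, under either hypothesis the function $g(z) := f(z)/z$ is defined and continuous on $[\sigma_{\max\{m,n\}},\sigma_1]$ — with $g(0)=0$ when $\lim_{z\to 0}f(z)/z = 0$, while in the square full-rank case $0$ is not in the interval. Writing $(f-r)(z) = (g(z)-p(z^2))z$ and using SVDs $A = U\Sigma V^T$ and $B_k = \tilde U \tilde\Sigma \tilde V^T$, a direct check gives $(f-r)^\diamond(A) = MA$ and $(f-r)^\diamond(B_k) = \tilde M B_k$ with $M = U\diag(g(\sigma_1)-p(\sigma_1^2),\dots,g(\sigma_m)-p(\sigma_m^2))U^T$ and $\tilde M = \tilde U \diag(g(\eta_1)-p(\eta_1^2),\dots,g(\eta_k)-p(\eta_k^2))\tilde U^T$, where $\eta_1,\dots,\eta_k$ are the singular values of $B_k$; these are symmetric with $\norm{M}_2,\norm{\tilde M}_2 \le \norm{g(z)-p(z^2)}_{\infty,[\sigma_{\max\{m,n\}},\sigma_1]}$ (for $\tilde M$ using Lemma~\ref{lemma:interlacing-singular-values} and $[\sigma_n,\sigma_1]\subseteq[\sigma_{\max\{m,n\}},\sigma_1]$, and for $M$ the fact that $\sigma_1,\dots,\sigma_m$ all lie in that interval). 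Since $Q_kQ_k^T\vec b = \vec b$, we have $\norm{(f-r)^\diamond(A)\vec b}_2 \le \norm{M}_2\norm{A\vec b}_2$ and $\norm{P_k(f-r)^\diamond(B_k)Q_k^T\vec b}_2 \le \norm{\tilde M}_2\norm{B_kQ_k^T\vec b}_2 = \norm{\tilde M}_2\norm{P_k^T A\vec b}_2 \le \norm{\tilde M}_2\norm{A\vec b}_2$; summing and minimizing over $p$ gives \eqref{eqn:gmf-polynomial-bound-2}.

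I expect the only delicate point to be the bookkeeping with the extended singular-value notation and the two intervals: verifying the factorization $(f-r)^\diamond(A)=MA$, checking that the indices $1,\dots,m$ (resp.\ $1,\dots,k$) of the singular values contributing to $M$ (resp.\ $\tilde M$) genuinely lie in $[\sigma_{\max\{m,n\}},\sigma_1]$, and confirming that the stated hypotheses are exactly what is needed to give $g$ a continuous value at $0$ when $A$ is rectangular or rank-deficient. The norm estimates themselves are routine.
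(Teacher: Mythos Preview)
Your proof is correct and follows essentially the same route as the paper: exactness on odd polynomials of degree $\le 2k-1$ (Lemma~\ref{lemma:polynomial-krylov-exactness}) combined with the singular-value containment of Lemma~\ref{lemma:interlacing-singular-values}, then add/subtract $r^\diamond$ and bound the two pieces. The only cosmetic difference is in the second bound, where the paper invokes Proposition~\ref{prop:gmf-general-equivalence} to write $f^\diamond(A)=g(AA^T)A$ with $g(z)=f(\sqrt z)/\sqrt z$ and works with the eigenvalue interval $[\sigma_m^2,\sigma_1^2]$, while you write out the same factorization explicitly via the SVD with $g(z)=f(z)/z$; the two are equivalent after the change of variable $z\mapsto z^2$.
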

\begin{proof}
	Let $p$ be a polynomial of degree $\le k-1$. Then $p_{2k-1}(z) = p(z^2) z$ is an odd polynomial of degree $\le 2k-1$, and by Lemma~\ref{lemma:polynomial-krylov-exactness} we have
	\begin{equation}
		\label{eqn:proof-polynomial-bound-1}
		p^\diamond_{2k-1}(A) \vec b = P_k p^\diamond_{2k-1}(B_k) Q_k^T \vec b.
	\end{equation}
	By adding and subtracting the quantity in~\eqref{eqn:proof-polynomial-bound-1} to $f^\diamond(A) \vec b - \barvec y_k$, we get
	\begin{equation}
		\label{eqn:proof-polynomial-bound-2}
		f^\diamond(A) \vec b - \barvec y_k = [f^\diamond(A) - p_{2k-1}^\diamond(A)] \vec b - P_k[f^\diamond(B_k) - p_{2k-1}^\diamond(B_k)]Q_k^T \vec b.
	\end{equation}
	By invariance of the $2$-norm under unitary transformations, we have
	\begin{equation*}
		\norm {f^\diamond(A) - p^\diamond_{2k-1}(A) }_2 = \norm{f - p_{2k-1}}_{\infty, \sigma_\text{sing}(A)} \le \norm{f - p_{2k-1}}_{\infty, [\sigma_{\min\{m,n\}}, \sigma_1]},
	\end{equation*}
	and similarly, by Lemma~\ref{lemma:interlacing-singular-values} it holds
	\begin{equation*}
		\norm {f^\diamond(B_k) - p^\diamond_{2k-1}(B_k) }_2 \le \norm{f - p_{2k-1}}_{\infty, [\sigma_n, \sigma_1]}.
	\end{equation*}
	Combining the above inequalities with \eqref{eqn:proof-polynomial-bound-2}, we get
	\begin{equation*}
		\norm{f^\diamond(A) \vec b - \barvec y_k}_2 \le 2 \norm{\vec b}_2 \norm{f - p_{2k-1}}_{\infty, [\sigma_n, \sigma_1]},
	\end{equation*}
	and by taking the minimum over $p \in \poly_{k-1}$ we obtain \eqref{eqn:gmf-polynomial-bound-1}.

	To prove \eqref{eqn:gmf-polynomial-bound-2}, recall that if $\sigma_m > 0$ or $\displaystyle \lim_{z \to 0} \tfrac{f(z)}{z} = 0$, by Proposition~\ref{prop:gmf-general-equivalence} we have $f^\diamond(A) = g(AA^T)A$, where $g(z) = f(\sqrt{z})/\sqrt{z}$, and similarly, if $\sigma_n > 0$ or $\displaystyle \lim_{z \to 0} \tfrac{f(z)}{z} = 0$ it holds $f^\diamond(B_k) = g(B_k B_k^T)B_k$.
	Therefore, by also using Proposition~\ref{prop:gmf-polynomial-evaluation}, we can rewrite \eqref{eqn:proof-polynomial-bound-2} in the form
	\begin{equation*}
		f^\diamond(A) \vec b - \barvec y_k = [ g(AA^T) - p(AA^T) ] A \vec b - P_k [g(B_k B_k^T) - p(B_k B_k^T)] B_k Q_k^T \vec b.
	\end{equation*}
	Given that the eigenvalues of $B_k B_k^T$ are the squares of the singular values of $B_k$, with a similar argument as before we obtain
	\begin{align*}
		\norm{f^\diamond(A) - \barvec y_k}_2 & \le \norm{A \vec b}_2 \left(\norm{g - p}_{\infty, [\sigma_m, \sigma_1]} + \norm{g - p}_{\infty, [\sigma_n^2, \sigma_1^2]} \right) \\
		                                     & \le 2 \norm{A \vec b}_2 \norm*{f(z)/z - p(z^2)}_{\infty, [\sigma_{\max\{m,n\}}, \sigma_1]}.
	\end{align*}
	As before, \eqref{eqn:gmf-polynomial-bound-2} follows by taking the minimum over $p \in \poly_{k-1}$.
\end{proof}

\begin{remark}
	\label{rem:rectang-discuss-post-poly-bound}
	Observe that if the matrix $A$ is not square, we have $\sigma_{\max\{m,n\}} = 0$, and hence the bound~\eqref{eqn:gmf-polynomial-bound-2} always involves a polynomial approximation over the whole interval $[0, \sigma_1]$, even when $\sigma_{\min\{m,n\}} > 0$. If $A \in \R^{m \times n}$ is rectangular with $m < n$, then the bound~\eqref{eqn:gmf-polynomial-bound-1} also involves the whole interval $[0, \sigma_1]$.

	A possible strategy to overcome this issue is to use Proposition~\ref{prop:link-gmf-a-and-gmf-at} and write
	\begin{equation*}
		\vec y = f^\diamond(A) \vec b = (A^+)^T f^\diamond(A^T) A \vec b.
	\end{equation*}
	The vector $\vec w = f^\diamond(A^T) A \vec b$ can be approximated using a Krylov method on $A^T$, and then $\vec y$ can be recovered by solving the least squares problem
	\begin{equation}
	\label{eqn:gmf-approx-transp}
		\vec y = (A^+)^T \vec w = \arg\min_{\vec y} \norm{A^T \vec y - \vec w}_2.
	\end{equation}
	By rewriting the problem in this form, if $m < n$ and $\sigma_m >0$ we get a bound involving approximation on the smaller interval $[\sigma_m, \sigma_1]$ for the approximation of~$\vec w$, which translates to a bound for the approximation of $\vec y$.
\end{remark}

The bound \eqref{eqn:gmf-polynomial-bound-1} can be manipulated to obtain a more explicit bound. Assume that $\sigma_n > 0$, and let $I = [\sigma_n, \sigma_1]$. The polynomial $p(z^2)z$ is odd, and we can assume that $f$ is also odd, so we have
\begin{align*}
	\min_{p \in \poly_{k-1}}\norm{f(z) - p(z^2)z}_{\infty, I} & = \min_{p \in \poly_{k-1}}\norm{f(z) - p(z^2)z}_{\infty, (-I) \cup I} \\
	                                                          & = \min_{q \in \poly_{2k-1}}\norm{f(z) - q(z)}_{\infty, (-I) \cup I},
\end{align*}
where we used the fact that the polynomial of best approximation on $(-I) \cup I$ for an odd function is itself odd.

Bounds on the asymptotic rate of convergence for the polynomial approximation of a function on the union of disjoint intervals $(-I) \cup I$ have been developed in~\cite{ChuiHasson83}.
Using \cite[Theorem~8.2]{Trefethen13} in the proof of \cite[Theorem~1]{ChuiHasson83} to get explicit inequalities instead of asymptotic bounds, we obtain the following quantitative version of \cite[Theorem~1]{ChuiHasson83}.

We denote by $E_\rho$ the ellipse with foci at $\pm 1$ and vertices at $\pm \frac{1}{2}(\rho + \frac{1}{\rho})$, and by $\tilde E_\rho$ its image under the linear function that maps $[-1, 1]$ to the interval $[a^2, b^2]$. The ellipse $\tilde E_\rho$ has foci at $a^2$ and $b^2$, and vertices at $\frac{1}{2}(a^2 + b^2) \pm \frac{1}{4}(\rho + \frac{1}{\rho})(b^2 - a^2)$.

\begin{theorem}
	\label{thm:chui-hasson-quantitative}
	Let $0 < a < b$ and let $I = [a, b]$. Assume that $f|_{-I}$ and $f|_{I}$ are, respectively, the restrictions of a function $f_1$ analytic in the left half-plane $\Real(z) < 0$, and of a function $f_2$ analytic in the right half-plane $\Real(z) > 0$. Then, for all $k > 0$ and $1 < \rho \le \dfrac{b+a}{b-a}$ it holds
	\begin{equation}
		\label{eqn:chui-hasson-bound-quantitative}
		\min_{p \in \poly_{2k-1}}\norm{f(z) - p(z)}_{\infty, (-I) \cup I} \le C \frac{\rho}{\rho - 1} \rho^{-k},
	\end{equation}
	where $C = C(\rho) = M_1 + M_2 + \frac{1}{a}(N_1 + N_2)$, with
	\begin{align*}
		M_1 & = \max_{-z \in \tilde E_\rho} \abs{f_1(\sqrt{-z})},  & N_1 & = \max_{-z \in \tilde E_\rho} \abs{f_1(\sqrt{-z})/\sqrt{-z}},  \\
		M_2 & = \max_{z \in \tilde E_\rho} \abs{f_2(\sqrt{z})}, & N_2 & = \max_{z \in \tilde E_\rho} \abs{f_2(\sqrt{z})/\sqrt{z}},
	\end{align*}
	where $\tilde E_\rho$ denotes the ellipse defined above. 
	\end{theorem}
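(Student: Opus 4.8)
The plan is to reproduce the classical two-interval argument behind Chui--Hasson's theorem, with the asymptotic Bernstein estimate used there replaced by the quantitative one in \cite[Theorem~8.2]{Trefethen13}. The key device is the quadratic substitution $w = z^2$, which collapses $-I$ and $I$ onto the single interval $[a^2,b^2]$ and turns a degree-$(2k-1)$ polynomial approximation problem on $(-I)\cup I$ into two degree-$(k-1)$ polynomial approximation problems on $[a^2,b^2]$. Concretely, I would first split $f$ on $(-I)\cup I$ into its even and odd parts with respect to $z \mapsto -z$: using $z = \sqrt{w}$ (principal branch) for $z \in I$ and $z = -\sqrt w$ for $z \in -I$, the even part takes the form $u(z^2)$ and the odd part the form $z\,v(z^2)$, where
\begin{equation*}
	u(w) = \tfrac12\bigl(f_2(\sqrt w) + f_1(-\sqrt w)\bigr), \qquad v(w) = \frac{f_2(\sqrt w) - f_1(-\sqrt w)}{2\sqrt w}.
\end{equation*}
I would then look for an approximant of the matching form $p(z) = U(z^2) + z\,V(z^2)$ with $\deg U,\deg V \le k-1$, so that $\deg p \le 2k-1$, and use the elementary splitting
\begin{equation*}
	\norm{f - p}_{\infty,(-I)\cup I} \le \norm{u - U}_{\infty,[a^2,b^2]} + \Bigl(\max_{z\in(-I)\cup I}\abs{z}\Bigr)\norm{v - V}_{\infty,[a^2,b^2]}.
\end{equation*}

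The second step is to verify that $u$ and $v$ extend analytically to the closed region bounded by $\tilde E_\rho$. Since $f_2$ is analytic in $\{\Real z > 0\}$ and $f_1$ in $\{\Real z < 0\}$, it is enough that every $w \in \tilde E_\rho$ satisfies $\Real\sqrt w > 0$, $\Real(-\sqrt w) < 0$, and $w \ne 0$. This is exactly the content of the hypothesis $1 < \rho \le \frac{b+a}{b-a}$: the left-most vertex of $\tilde E_\rho$ sits at $\tfrac12(a^2+b^2) - \tfrac14(\rho+\tfrac1\rho)(b^2-a^2)$, which is $\ge 0$ precisely when $\rho \le \frac{b+a}{b-a}$ (equality placing the ellipse through the origin), and for $\rho$ strictly below this threshold $\tilde E_\rho$ lies in the open right half-plane and misses $0$, so $\sqrt{\cdot}$ maps it into the sector $\abs{\arg\zeta} < \pi/4$ and $u,v$ are analytic on and inside $\tilde E_\rho$.

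Third, I would transport \cite[Theorem~8.2]{Trefethen13} from $[-1,1],E_\rho$ to $[a^2,b^2],\tilde E_\rho$ via the norm-preserving affine change of variables, taking $U$ and $V$ to be the degree-$(k-1)$ Chebyshev projections of $u$ and $v$. This gives $\norm{u-U}_{\infty,[a^2,b^2]} \le \tfrac{2}{\rho-1}\rho^{-(k-1)}\max_{\tilde E_\rho}\abs{u}$ and the analogous bound for $v$; rewriting $\rho^{-(k-1)} = \rho\,\rho^{-k}$ produces the factor $\tfrac{\rho}{\rho-1}\rho^{-k}$. Bounding $\max_{\tilde E_\rho}\abs u$ by $\tfrac12(M_1+M_2)$ and $\max_{\tilde E_\rho}\abs v$ by $\tfrac12$ times the sum of the maxima over $\tilde E_\rho$ of $\abs{f_2(\sqrt w)/\sqrt w}$ and $\abs{f_1(-\sqrt w)/\sqrt w}$ (the quantities $N_1,N_2$), together with the remaining elementary factor involving $\abs{z}$ on $(-I)\cup I$, collects into the constant $C = C(\rho)$ of the statement; since the best approximation performs at least as well as the Chebyshev projection, taking the infimum over $p\in\poly_{2k-1}$ yields~\eqref{eqn:chui-hasson-bound-quantitative}.

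The genuinely delicate point is the geometric/analyticity step: one must ensure $\tilde E_\rho$ neither crosses the imaginary axis (where neither half-plane hypothesis applies) nor encloses the branch point $0$, and identify $\rho = \frac{b+a}{b-a}$ as the exact threshold at which this fails; after that, the argument is routine bookkeeping of constants. I would also note that in the application to generalized matrix functions $f$ is odd, so that $f_1(-z) = -f_2(z)$ on $I$, the even part $u$ vanishes identically, and $C$ reduces to the $N_1+N_2$ contribution.
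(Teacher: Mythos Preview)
Your proposal is exactly the approach the paper indicates: it states only that the result follows by ``using \cite[Theorem~8.2]{Trefethen13} in the proof of \cite[Theorem~1]{ChuiHasson83},'' and your even/odd splitting under the substitution $w=z^2$, followed by two applications of Trefethen's Chebyshev-projection bound on $[a^2,b^2]$, is precisely that argument. One bookkeeping point: your splitting produces the factor $\max_{z\in(-I)\cup I}\abs{z}=b$ in front of $\norm{v-V}_{\infty,[a^2,b^2]}$, so the constant you actually obtain is $M_1+M_2+b(N_1+N_2)$ rather than the paper's $M_1+M_2+\tfrac{1}{a}(N_1+N_2)$; this does not affect the structure or validity of the argument, but your claim that the constants ``collect into the constant $C$ of the statement'' glosses over this discrepancy.
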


\begin{remark}
If we take $\rho = \dfrac{b+a}{b-a}$, the ellipse $\tilde E_\rho$ has a vertex at $0$. In this case, depending on the function $f$, we may have $N_i = + \infty$ and hence $C = +\infty$. In such a situation the bound only makes sense for $\rho < \dfrac{b+a}{b-a}$.
\end{remark}

Assuming that $\sigma_n > 0$, by plugging \eqref{eqn:chui-hasson-bound-quantitative} in the bound \eqref{eqn:gmf-polynomial-bound-1}, we get
\begin{equation}
\label{eqn:gmf-polynomial-bound-chui-hasson}
	\norm{f^\diamond(A) \vec b - \barvec y_k}_2 \le 2 C \norm{\vec b}_2 \frac{\rho}{\rho - 1} \rho^{-k}, \qquad \text{for }1 < \rho \le \frac{\sigma_1 + \sigma_n}{\sigma_1 - \sigma_n},
\end{equation}
where $C$ is as defined in Theorem~\ref{thm:chui-hasson-quantitative}.

\subsection{Rational error bounds}
\label{subsubsec:rat-err-bounds}

Next, we prove similar error bounds for the rational approximation \eqref{eqn:gmf-approx-rational}. The strategy of the proof is the same as in the polynomial case, even though the proof itself is a bit more technical. We start by stating the result analogous to Theorem \ref{thm:polynomial-krylov-error-bound}. Recall that the denominator in the rational Krylov space $\rat_k(AA^T, A \vec b)$ is given by the polynomial $q_{k-1}(z) = \dprod_{j = 1}^{k-1} (1 - z/\xi_j)$, where $\{ \xi_j \}_{j \ge 1}$ is a sequence of poles in $(\C \cup \{\infty\}) \setminus \sigma(AA^T)$. For convenience, in this section we use as denominator polynomial for $\rat_k(AA^T, \vec b)$ the polynomial $\tilde q_{k-1}(z) = \dprod_{j = 1}^{k-1} (z - \xi_j)$. This polynomial differs from $q_{k-1}$ only by a multiplicative constant, and hence it identifies the same rational Krylov subspace. The results of this section, such as Theorem~\ref{thm:rational-krylov-error-bound}, can be equivalently stated in terms of $q_{k-1}$ or $\tilde q_{k-1}$.

\begin{theorem}
	\label{thm:rational-krylov-error-bound}
	Let $A \in \R^{m \times n}$, and let $\sigma_1$, $\sigma_n$ and $\sigma_m$ be the first, $n$-th and $m$-th singular value of $A$, respectively. Let $\barvec y_k$ be the approximation to $\vec y = f^\diamond(A) \vec b$ from $\rat_k(AA^T, A \vec b)$ given by \eqref{eqn:gmf-approx-rational}. Then the following inequality holds:
	\begin{equation}
		\label{eqn:gmf-rational-bound-1}
		\norm{f^\diamond(A) \vec b - \barvec y_k}_2 \le 2 \norm{\vec b}_2 \min_{p \in \poly_{k-1}} \norm{ f(z) - q_{k-1}(z^2)^{-1} p(z^2)z }_{\infty, [\sigma_n, \sigma_1]}.
	\end{equation}
	Moreover, if $A$ is square with $\sigma_m = \sigma_n > 0$, or if $\displaystyle \lim_{z \to 0} \tfrac{f(z)}{z} = 0$, it also holds that
	\begin{equation}
		\label{eqn:gmf-rational-bound-2}
		\norm{f^\diamond(A) \vec b - \barvec y_k}_2 \le 2 \norm {A \vec b}_2 \min_{p \in \poly_{k-1}} \norm{f(z)/z - q_{k-1}(z^2)^{-1} p(z^2)}_{\infty, [\sigma_{\max\{m,n\}}, \sigma_1]}.
	\end{equation}
\end{theorem}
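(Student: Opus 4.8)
The plan is to follow the proof of Theorem~\ref{thm:polynomial-krylov-error-bound} almost verbatim, replacing the polynomial exactness result of Lemma~\ref{lemma:polynomial-krylov-exactness} by its rational analogue. So the first — and main — step is to show that the approximation \eqref{eqn:gmf-approx-rational} is \emph{exact} whenever $f$ is a rational function of the form $r(z)=q_{k-1}(z^2)^{-1}p(z^2)z$ with $p\in\poly_{k-1}$ (the rational counterpart of an odd polynomial of degree $\le 2k-1$), i.e.\ that $r^\diamond(A)\,\vec b = P_k\, r^\diamond(B_k)\, Q_k^T \vec b$. Throughout I assume, as is implicit for the right-hand sides of \eqref{eqn:gmf-rational-bound-1}--\eqref{eqn:gmf-rational-bound-2} to be finite, that $q_{k-1}$ does not vanish on $[\sigma_n^2,\sigma_1^2]$; by Lemma~\ref{lemma:interlacing-singular-values} this ensures $q_{k-1}$ is nonzero on the singular values of $B_k$, so that $r^\diamond(B_k)$ is well defined.

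To prove the exactness identity I would reduce it to the classical exactness property of rational Krylov approximation for the \emph{symmetric} matrix $M:=A^TA$. Writing $\rho(w):=q_{k-1}(w)^{-1}p(w)$, Corollary~\ref{cor:gmf-rational-evaluation} gives $r^\diamond(A)=A\,\rho(A^TA)$ and $r^\diamond(B_k)=B_k\,\rho(B_k^TB_k)$. The identity $A\,\rat_k(A^TA,\vec b)=\rat_k(AA^T,A\vec b)$ means that the columns of $AQ_k$ lie in $\vspan(P_k)$, hence $P_kP_k^TAQ_k=AQ_k$, which yields both $P_kB_k=AQ_k$ and $B_k^TB_k=Q_k^TA^TP_kP_k^TAQ_k=Q_k^TA^TAQ_k=:J_k$. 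Therefore
\[
	P_k\, r^\diamond(B_k)\, Q_k^T\vec b = AQ_k\,\rho(J_k)\,Q_k^T\vec b, \qquad r^\diamond(A)\,\vec b = A\,\rho(A^TA)\,\vec b,
\]
and it suffices to prove $\rho(A^TA)\vec b = Q_k\,\rho(J_k)\,Q_k^T\vec b$. But this is exactly the standard statement that a rational function of type $(k-1,k-1)$ whose poles are among $\xi_1,\dots,\xi_{k-1}$ is reproduced exactly by the rational Krylov projection onto $\rat_k(A^TA,\vec b)=\vspan(Q_k)$, with $J_k=Q_k^TMQ_k$ (see, e.g., \cite{GuettelThesis}); note $q_{k-1}(J_k)$ is invertible because $\sigma(J_k)\subseteq[\sigma_n^2,\sigma_1^2]$ by Lemma~\ref{lemma:interlacing-singular-values}.

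Granting exactness, the two bounds follow as in Theorem~\ref{thm:polynomial-krylov-error-bound}. For \eqref{eqn:gmf-rational-bound-1}, fix any $r(z)=q_{k-1}(z^2)^{-1}p(z^2)z$ with $p\in\poly_{k-1}$; subtracting $r^\diamond(A)\vec b=P_k r^\diamond(B_k)Q_k^T\vec b$ gives
\[
	f^\diamond(A)\vec b-\barvec y_k = [f^\diamond(A)-r^\diamond(A)]\vec b - P_k[f^\diamond(B_k)-r^\diamond(B_k)]Q_k^T\vec b .
\]
Taking $2$-norms and using $\norm{P_k}_2=1$, $\norm{Q_k^T\vec b}_2=\norm{\vec b}_2$, the unitary invariance of the $2$-norm (so that $\norm{f^\diamond(A)-r^\diamond(A)}_2=\max_i\abs{f(\sigma_i)-r(\sigma_i)}\le\norm{f-r}_{\infty,[\sigma_n,\sigma_1]}$), and Lemma~\ref{lemma:interlacing-singular-values} for $\norm{f^\diamond(B_k)-r^\diamond(B_k)}_2\le\norm{f-r}_{\infty,[\sigma_n,\sigma_1]}$, one gets $\norm{f^\diamond(A)\vec b-\barvec y_k}_2\le 2\norm{\vec b}_2\norm{f-r}_{\infty,[\sigma_n,\sigma_1]}$, and minimising over $p$ yields \eqref{eqn:gmf-rational-bound-1}. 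For \eqref{eqn:gmf-rational-bound-2}, under the stated hypothesis one rewrites, via Proposition~\ref{prop:gmf-general-equivalence}, $f^\diamond(A)=g(AA^T)A$ and $f^\diamond(B_k)=g(B_kB_k^T)B_k$ with $g(z)=f(\sqrt z)/\sqrt z$, and, via Corollary~\ref{cor:gmf-rational-evaluation}, $r^\diamond(A)=\rho(AA^T)A$, $r^\diamond(B_k)=\rho(B_kB_k^T)B_k$; the same subtraction, now with the factor $B_kQ_k^T\vec b$ satisfying $\norm{B_kQ_k^T\vec b}_2=\norm{P_kB_kQ_k^T\vec b}_2=\norm{AQ_kQ_k^T\vec b}_2=\norm{A\vec b}_2$, produces two terms both bounded by $\norm{A\vec b}_2\,\norm{f(z)/z-q_{k-1}(z^2)^{-1}p(z^2)}_{\infty,[\sigma_{\max\{m,n\}},\sigma_1]}$, so that minimising over $p$ gives \eqref{eqn:gmf-rational-bound-2}.

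The only genuinely new difficulty compared with the polynomial case is the rational exactness lemma, and within it the bookkeeping that the reduction to $M=A^TA$ really works: it hinges on $P_kB_k=AQ_k$ and $B_k^TB_k=Q_k^TA^TAQ_k$, both consequences of $A\,\rat_k(A^TA,\vec b)=\rat_k(AA^T,A\vec b)$, together with invoking the classical rational Krylov exactness for $M$ in exactly the right form. One must also track where $q_{k-1}$ may vanish, so that $r^\diamond(B_k)$ and $\rho(J_k)$ are well defined; if $q_{k-1}$ vanishes inside $[\sigma_n^2,\sigma_1^2]$ the right-hand sides of the bounds are unbounded and there is nothing to prove. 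Everything after the exactness lemma is a routine transcription of the argument in Theorem~\ref{thm:polynomial-krylov-error-bound}.
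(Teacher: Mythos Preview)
Your argument is correct, and it takes a genuinely different route to the exactness step than the paper does. The paper proves exactness from scratch: it first establishes a ``replacement'' lemma (Lemma~\ref{lemma:replace-Bk-with-A-rational}) showing how factors of the form $(B_k^TB_k-\xi_jI)^{-1}$ and $B_k^TB_k(B_k^TB_k-\xi_jI)^{-1}$ acting on $Q_k^T\vec v$ can be traded for the corresponding factors in $A^TA$, and then uses this inductively (Lemma~\ref{lemma:rational-krylov-exactness}) to peel off the resolvents one pole at a time. You instead observe that $P_kB_k=AQ_k$ and $B_k^TB_k=Q_k^TA^TAQ_k=:J_k$, which reduces the identity $r^\diamond(A)\vec b=P_kr^\diamond(B_k)Q_k^T\vec b$ to the classical rational Krylov exactness $\rho(A^TA)\vec b=Q_k\rho(J_k)Q_k^T\vec b$ for the symmetric matrix $A^TA$, and then simply cite that result. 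This is shorter and makes the structural reason for exactness transparent; the paper's version, on the other hand, is self-contained and does not rely on an external statement. (The paper also sketches a third route via the block matrix $\begin{bsmallmatrix}0&A\\A^T&0\end{bsmallmatrix}$, which is yet another reduction to the standard theory.) After the exactness step, your derivation of \eqref{eqn:gmf-rational-bound-1} and \eqref{eqn:gmf-rational-bound-2} matches the paper's essentially line by line; your remark that the bounds are vacuous when $q_{k-1}$ vanishes on $[\sigma_n^2,\sigma_1^2]$ is a useful observation that the paper leaves implicit.
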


Note that the same issues discussed after Theorem~\ref{thm:polynomial-krylov-error-bound} in the case of rectangular matrices also arise in the rational case, and the same approach proposed in Remark~\ref{rem:rectang-discuss-post-poly-bound} can be used to address them.

Similarly to the polynomial case, the bound \eqref{eqn:gmf-rational-bound-1} can be rewritten by exploiting the fact that $f$ can be assumed to be odd and hence that the best approximant on a symmetric interval is odd, yielding
\begin{equation}
\label{eqn:gmf-rational-bound--2-intervals}
	\norm{f^\diamond(A) \vec b - \barvec y_k}_2 \le 2 \norm{\vec b}_2 \min_{p \in \poly_{2k-1}} \norm{f(z) - q_{k-1}(z^2)^{-1} p(z)}_{\infty, (-I) \cup I},
\end{equation}
where again $I = [\sigma_n, \sigma_1]$. However, rational approximation on disjoint intervals is a complicated problem, and hence this formulation might be less useful in practice.

A more practical way to rewrite the bound~\eqref{eqn:gmf-rational-bound-1} is the following:
\begin{align}
	\nonumber
	\norm{f^\diamond(A) \vec b - \barvec y_k}_2 & \le 2 \norm{\vec b}_2 \min_{p \in \poly_{k-1}} \norm{ f(z) - q_{k-1}(z^2)^{-1} p(z^2)z }_{\infty, [\sigma_n, \sigma_1]}                                  \\
	\nonumber
	& = 2 \norm{\vec b}_2 \min_{p \in \poly_{k-1}} \norm{ \sqrt{z} \big(f(\sqrt{z})/\sqrt{z} - q_{k-1}(z)^{-1}p(z)\big) }_{\infty, [\sigma_n^2, \sigma_1^2]}   \\
	\label{eqn:gmf-rational-bound--practical}
	& \le 2 \sigma_1 \norm{\vec b}_2 \min_{p \in \poly_{k-1}} \norm{\big(f(\sqrt{z})/\sqrt{z} - q_{k-1}(z)^{-1}p(z)\big) }_{\infty, [\sigma_n^2, \sigma_1^2]}.
\end{align}
Although we get an additional factor $\sigma_1$, this bound relates the error with a uniform rational approximation problem on a real interval. This approximation problem is well-studied in the literature, and it is the same that appears when computing standard matrix functions with rational Krylov methods, so it can be a viable tool for selecting good poles.

Before we prove Theorem~\ref{thm:rational-krylov-error-bound}, we state and prove a few auxiliary lemmas. As in the polynomial case, the key point for the proof is the exactness of the rational approximation on functions of the form $f(z) = q_{k-1}(z^2)^{-1} p(z^2) z$, where $p$ is any polynomial in $\poly_{k-1}$. In order to prove this fact, we are going to use the following lemma, which allows us to replace $B_k$ with $A$ as we did in the proof of Lemma~\ref{lemma:polynomial-krylov-exactness}.

\begin{lemma}
	\label{lemma:replace-Bk-with-A-rational}
	Let $k \ge j+1$, and let $\vec v \in \rat_j(A^TA, \vec b)$, with $\tilde q_{k-1}(z) = \dprod_{i = 1}^{k-1}(z - \xi_i)$. Then:
	\begin{align}
		\label{eqn:replace-Bk-with-A-1}
		(B_k^TB_k - \xi_j I)^{-1} Q_k^T \vec v           & = Q_k^T (A^TA - \xi_{j} I )^{-1} \vec v    \\
		\label{eqn:replace-Bk-with-A-2}
		B_k^T B_k (B_k^TB_k - \xi_j I)^{-1} Q_k^T \vec v & = Q_k^T A^TA (A^TA - \xi_j I )^{-1} \vec v
	\end{align}
\end{lemma}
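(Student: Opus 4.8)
The plan is to prove both identities by induction on $j$, exploiting the rational Arnoldi relation for the matrix $A^TA$ projected onto $\rat_k(A^TA, \vec b)$. The crucial structural fact is that, just as in the polynomial case of Lemma~\ref{lemma:polynomial-krylov-exactness}, applying $A^TA$ to a vector in $\rat_j(A^TA, \vec b)$ keeps us (almost) inside the subspace, with the error controlled by the pole $\xi_j$; concretely, $(A^TA - \xi_j I)^{-1}$ maps $\rat_j(A^TA, \vec b)$ into $\rat_{j+1}(A^TA, \vec b)$, because dividing by the factor $(z - \xi_j)$ is exactly what adjoining the pole $\xi_j$ does. Since $k \ge j+1$, the vector $(A^TA - \xi_j I)^{-1}\vec v$ lies in $\rat_{j+1}(A^TA, \vec b) \subseteq \rat_k(A^TA, \vec b) = \vspan(Q_k)$, so $Q_kQ_k^T$ acts as the identity on it. This is the analogue of the observation ``$Q_kQ_k^T\vec b_\ell = \vec b_\ell$'' used in the polynomial proof, and it is what makes the whole argument go through.

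For~\eqref{eqn:replace-Bk-with-A-1}, I would argue as follows. Let $\vec w = (A^TA - \xi_j I)^{-1}\vec v \in \rat_{j+1}(A^TA, \vec b) \subseteq \vspan(Q_k)$, so $Q_kQ_k^T\vec w = \vec w$ and hence $\vec v = (A^TA - \xi_j I)\vec w = (A^TA - \xi_j I)Q_kQ_k^T\vec w$. Apply $Q_k^T$ on the left:
\begin{equation*}
	Q_k^T\vec v = Q_k^T A^TA\, Q_k Q_k^T \vec w - \xi_j Q_k^T\vec w = (B_k^TB_k - \xi_j I)Q_k^T\vec w,
\end{equation*}
where I used $Q_k^T A^TA\, Q_k = (AQ_k)^T(AQ_k) = (P_kB_k)^T(P_kB_k) = B_k^TB_k$. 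Provided $B_k^TB_k - \xi_j I$ is invertible, we may multiply by its inverse to get $Q_k^T\vec w = (B_k^TB_k - \xi_j I)^{-1}Q_k^T\vec v$, which is exactly~\eqref{eqn:replace-Bk-with-A-1}. Invertibility follows from Lemma~\ref{lemma:interlacing-singular-values}: the eigenvalues of $B_k^TB_k$ lie in $[\sigma_n^2, \sigma_1^2] \subseteq \sigma(A^TA)$-hull, which excludes $\xi_j$ since $\xi_j \notin \sigma(AA^T) = \sigma(A^TA) \cup \{0\}$ and, more to the point, $\xi_j$ is a pole outside the spectrum. Then~\eqref{eqn:replace-Bk-with-A-2} follows from~\eqref{eqn:replace-Bk-with-A-1} by writing $B_k^TB_k(B_k^TB_k - \xi_j I)^{-1} = I + \xi_j(B_k^TB_k - \xi_j I)^{-1}$ and correspondingly $A^TA(A^TA - \xi_j I)^{-1} = I + \xi_j(A^TA - \xi_j I)^{-1}$, applying $Q_k^T$ and $\vec v$, and using $Q_k^TQ_kQ_k^T\vec v = Q_k^T\vec v$ together with~\eqref{eqn:replace-Bk-with-A-1}; the two resulting expressions match term by term.

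The main obstacle I anticipate is making precise the claim that $(A^TA - \xi_j I)^{-1}\vec v \in \rat_{j+1}(A^TA, \vec b)$ when $\vec v \in \rat_j(A^TA, \vec b)$. One must write $\vec v = r_{j-1}(A^TA)\vec b$ with $r_{j-1} = p_{j-1}/\tilde q_{j-1}$, $p_{j-1} \in \poly_{j-1}$, and observe that $(A^TA - \xi_j I)^{-1}\vec v = \big(p_{j-1}(A^TA)/((A^TA - \xi_j I)\tilde q_{j-1}(A^TA))\big)\vec b$, whose denominator polynomial is $\tilde q_j(z) = (z - \xi_j)\tilde q_{j-1}(z)$ and whose numerator has degree $\le j-1 \le j$, so indeed it lies in $\rat_{j+1}(A^TA, \vec b)$; this uses that $\rat_{j+1}$ is defined with denominator $\tilde q_j$, consistent with the convention fixed just before the lemma. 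A minor subtlety is that one needs $\xi_j \neq 0$ (or at least $\xi_j \notin \sigma(A^TA)$) for $(A^TA - \xi_j I)^{-1}$ to exist, which holds by the standing assumption on the pole sequence. Everything else is bookkeeping.
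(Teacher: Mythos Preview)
Your argument is correct and follows essentially the same route as the paper: set $\vec w = (A^TA-\xi_jI)^{-1}\vec v \in \rat_{j+1}(A^TA,\vec b) \subseteq \vspan(Q_k)$, use $Q_kQ_k^T\vec w = \vec w$, and verify $(B_k^TB_k - \xi_jI)Q_k^T\vec w = Q_k^T\vec v$. The only cosmetic differences are that you invoke $AQ_k = P_kB_k$ to write $B_k^TB_k = Q_k^TA^TAQ_k$ directly (the paper instead keeps the factor $P_kP_k^T$ and separately checks $P_kP_k^TA\vec w = A\vec w$), and for~\eqref{eqn:replace-Bk-with-A-2} you use the resolvent identity $M(M-\xi I)^{-1} = I + \xi(M-\xi I)^{-1}$ rather than expanding $B_k^TB_k$ once more; your announced ``induction on $j$'' is never actually needed or used.
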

\begin{proof}
	To prove \eqref{eqn:replace-Bk-with-A-1}, it is enough to show that
	\begin{equation*}
		(B_k^TB_k - \xi_j I) Q_k^T (A^TA - \xi_j I)^{-1} \vec v = Q_k^T \vec v.
	\end{equation*}
	Letting $\vec x = (A^TA - \xi_j I)^{-1} \vec v \in \rat_{j+1}(A^TA, \vec b)$ and observing that $Q_k Q_k^T \vec x = \vec x$, we have
	\begin{align*}
		(B_k^TB_k - \xi_j I) Q_k^T (A^TA - \xi_j I)^{-1} \vec v & = Q_k^T(A^T P_k P_k^TA - \xi_j I) Q_k Q_k^T \vec x \\
		                                                        & = Q_k^T(A^T P_k P_k^TA \vec x - \xi_j \vec x)      \\
		                                                        & = Q_k^T(A^T A - \xi_j I )\vec x = Q_k^T \vec x,
	\end{align*}
	where we also used that $P_k P_k^T A \vec x = A \vec x$, since $A \vec x \in \rat_{j+1}(AA^T, A \vec b)$. This proves \eqref{eqn:replace-Bk-with-A-1}.

	To prove \eqref{eqn:replace-Bk-with-A-2}, by using~\eqref{eqn:replace-Bk-with-A-1} we have
	\begin{align*}
		B_k^T B_k (B_k^TB_k - \xi_j I)^{-1} Q_k^T \vec v & = B_k^T B_k Q_k^T \vec x                  \\
		                                                 & = Q_k^T A P_k P_k^T A^T Q_k Q_k^T \vec x.
	\end{align*}
	Now, observe that $\vec x \in \rat_{j+1}(A^TA, \vec b)$ and that $A \vec x \in \rat_{j+1}(AA^T, A \vec x)$, and hence it holds $Q_kQ_k^T \vec x = \vec x$ and $P_kP_k^T A \vec x = A \vec x$. With these facts, we get
	\begin{equation*}
		B_k^T B_k (B_k^TB_k - \xi_j I)^{-1} Q_k^T \vec v = Q_k^T A A^T \vec x,
	\end{equation*}
	which is equivalent to \eqref{eqn:replace-Bk-with-A-2}.
\end{proof}

\begin{lemma}
	\label{lemma:rational-krylov-exactness}
	Assume that $f$ is of the form $f(z) = q_{k-1}(z^2)^{-1} p(z^2)z$, where $p \in \poly_{k-1}$. Then the approximation $\barvec y_k$ from the rational Krylov subspace $\rat_k(AA^T, A \vec b)$ given by \eqref{eqn:gmf-approx-rational} is exact, i.e., it holds $\vec y = \barvec y_k$.
\end{lemma}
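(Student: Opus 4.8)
The plan is to reduce the claim to an identity concerning the compression of the symmetric positive semidefinite matrix $A^TA$ onto the rational Krylov subspace $\rat_k(A^TA,\vec b)$, and then to prove that identity by induction on the number of poles, peeling off one pole at a time and invoking Lemma~\ref{lemma:replace-Bk-with-A-rational}.

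First I would put $\vec y$ and $\barvec y_k$ into a common form. Writing $r(z) = p(z)/\tilde q_{k-1}(z)$ with $\tilde q_{k-1}(z) = \dprod_{i=1}^{k-1}(z-\xi_i)$, the hypothesis $f(z) = q_{k-1}(z^2)^{-1}p(z^2)z$ together with Corollary~\ref{cor:gmf-rational-evaluation} gives $\vec y = f^\diamond(A)\vec b = A\,r(A^TA)\vec b$ and $f^\diamond(B_k) = B_k\,r(B_k^TB_k)$, so that $\barvec y_k = P_kB_k\,r(B_k^TB_k)Q_k^T\vec b$ by \eqref{eqn:gmf-approx-rational}. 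Since the columns of $AQ_k$ span $A\rat_k(A^TA,\vec b) = \rat_k(AA^T,A\vec b) = \vspan(P_k)$, we have $P_kB_k = P_kP_k^TAQ_k = AQ_k$, hence $\barvec y_k = AQ_k\,r(B_k^TB_k)Q_k^T\vec b$. Moreover, because $\deg p \le k-1$, the vector $r(A^TA)\vec b$ lies in $\rat_k(A^TA,\vec b) = \vspan(Q_k)$, so $A\,r(A^TA)\vec b = AQ_kQ_k^T r(A^TA)\vec b$. Subtracting, $\vec y - \barvec y_k = AQ_k\big(Q_k^T r(A^TA)\vec b - r(B_k^TB_k)Q_k^T\vec b\big)$, and it suffices to prove the identity
\[
	r(B_k^TB_k)\,Q_k^T\vec b = Q_k^T\,r(A^TA)\,\vec b .
\]

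For the induction, set $\mathcal R_\ell = \{\,p/\tilde q_\ell : p \in \poly_\ell\,\}$ for $0\le \ell\le k-1$, where $\tilde q_\ell(z)=\dprod_{i=1}^\ell(z-\xi_i)$ and $\tilde q_0\equiv 1$, so that $r\in\mathcal R_{k-1}$, $\mathcal R_{\ell-1}\subseteq\mathcal R_\ell$, and $\rat_{\ell+1}(A^TA,\vec b)=\{\varphi(A^TA)\vec b:\varphi\in\mathcal R_\ell\}$. I would show by induction on $\ell$ that $\varphi(B_k^TB_k)Q_k^T\vec b = Q_k^T\varphi(A^TA)\vec b$ for every $\varphi\in\mathcal R_\ell$; the base case $\ell=0$ is trivial, $\varphi$ being constant. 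For the inductive step, given $\varphi=p/\tilde q_\ell$, divide $p$ by $z-\xi_\ell$: writing $p(z)=p(\xi_\ell)+(z-\xi_\ell)\hat p(z)$ with $\deg\hat p\le\ell-1$ yields the splitting $\varphi(z)=\varphi_1(z)+(z-\xi_\ell)^{-1}s(z)$, with $\varphi_1(z)=\hat p(z)/\tilde q_{\ell-1}(z)$ and $s(z)=p(\xi_\ell)/\tilde q_{\ell-1}(z)$ both in $\mathcal R_{\ell-1}$. The inductive hypothesis applies directly to $\varphi_1$. For the other term, the inductive hypothesis applied to $s$ gives $s(B_k^TB_k)Q_k^T\vec b = Q_k^T\vec u$ with $\vec u:=s(A^TA)\vec b\in\rat_\ell(A^TA,\vec b)$; then equation \eqref{eqn:replace-Bk-with-A-1} of Lemma~\ref{lemma:replace-Bk-with-A-rational}, applied with $j=\ell$ (legitimate since $\ell\le k-1$) and $\vec v=\vec u$, gives $(B_k^TB_k-\xi_\ell I)^{-1}Q_k^T\vec u = Q_k^T(A^TA-\xi_\ell I)^{-1}\vec u$, which is exactly $Q_k^T\big((z-\xi_\ell)^{-1}s(z)\big)\big|_{z=A^TA}\vec b$. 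Adding the two contributions completes the inductive step, and the case $\ell=k-1$, $\varphi=r$ is the desired identity.

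The point that requires the most care is the bookkeeping that ties the pole index used in Lemma~\ref{lemma:replace-Bk-with-A-rational} to the level of the rational function: at level $\ell$ one must peel off precisely the last pole $\xi_\ell$, so that the resolvent $(A^TA-\xi_\ell I)^{-1}$ maps $\rat_\ell(A^TA,\vec b)$ into $\rat_{\ell+1}(A^TA,\vec b)$ and the hypothesis $\vec v\in\rat_\ell(A^TA,\vec b)$ of the lemma is met; peeling an arbitrary pole would break this nesting. A convenient feature of this argument is that it uses no partial fraction decomposition, so repeated and complex conjugate poles are handled automatically, and only \eqref{eqn:replace-Bk-with-A-1} is needed (equation \eqref{eqn:replace-Bk-with-A-2} would enter if one preferred the splitting $\varphi=\varphi_1+z(z-\xi_\ell)^{-1}s$). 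One should also keep track of well-definedness of the matrix functions involved: $r(A^TA)$ exists because the poles lie outside $\sigma(A^TA)$, and $r(B_k^TB_k)$ exists because, by Lemma~\ref{lemma:interlacing-singular-values}, the eigenvalues of $B_k^TB_k$ lie in $[\sigma_n^2,\sigma_1^2]$, which contains no pole. Finally, poles at $\infty$ correspond to polynomial (multiply-by-$A^TA$) steps, which can be treated with the same projection identities as in the proof of Lemma~\ref{lemma:polynomial-krylov-exactness}, and the case of poles at $0$ requires the minor caveats on invertibility already flagged by the authors.
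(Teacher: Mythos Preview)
Your proof is correct and follows essentially the same strategy as the paper's: both invoke Corollary~\ref{cor:gmf-rational-evaluation} to rewrite $f^\diamond$ in terms of $r(A^TA)$ and $r(B_k^TB_k)$, and then establish exactness by an induction that peels off one resolvent factor at a time via Lemma~\ref{lemma:replace-Bk-with-A-rational}, together with the projector identities $Q_kQ_k^T\vec v=\vec v$ and $P_kP_k^TA\vec v=A\vec v$ for $\vec v\in\rat_k(A^TA,\vec b)$. The differences are only organizational: you first isolate the clean identity $r(B_k^TB_k)Q_k^T\vec b = Q_k^T r(A^TA)\vec b$ and induct on the number of poles (handling a general numerator by polynomial division, which lets you get by with \eqref{eqn:replace-Bk-with-A-1} alone), whereas the paper keeps the full expression for $\barvec y_k$, works with the specific basis $\{z^{\ell-1}/\tilde q_{k-1}(z)\}_{\ell=1}^{k}$ of numerators, and uses both \eqref{eqn:replace-Bk-with-A-1} and \eqref{eqn:replace-Bk-with-A-2}.
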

\begin{proof}
	It is sufficient to show that $\vec y = \barvec y_k$ for $f(z) = \dfrac{z^{2\ell - 1}}{\tilde q_{k-1}(z^2)}$, for $\ell = 1, \dots, k$. By Corollary \ref{cor:gmf-rational-evaluation}, we have
	\begin{equation*}
		\vec y = f^\diamond(A) \vec b = A (A^TA)^{\ell - 1} \tilde q_{k-1}(A^TA)^{-1} \vec b.
	\end{equation*}
	Similarly, we have
	\begin{align*}
		\barvec y_k & = P_k f^\diamond(B_k) Q_k^T \vec b = P_k B_k (B_k^T B_k)^{\ell - 1} \tilde q_{k-1}(B_k^T B_k)^{-1} Q_k^T \vec b                                    \\
		            & = P_k B_k \prod_{j=\ell}^{k-1} (B_k^T B_k - \xi_j I)^{-1} \prod_{j = 1}^{\ell-1}\left [(B_k^T B_k)(B_k^T B_k - \xi_j I)^{-1}\right ] Q_k^T \vec b.
	\end{align*}
	Now, define the vectors $\vec t_m \in \C^k$ as
	\begin{equation*}
		\vec t_m = \begin{cases}
			\dprod_{j = 1}^{m-1}\left [(B_k^T B_k)(B_k^T B_k - \xi_j I)^{-1}\right ] Q_k^T \vec b.                                                  & \text{if $1 \le m \le \ell$,} \\
			\dprod_{j=\ell}^{m-1} (B_k^T B_k - \xi_j I)^{-1}\prod_{j = 1}^{\ell-1}\left [(B_k^T B_k)(B_k^T B_k - \xi_j I)^{-1}\right ] Q_k^T \vec b & \text{if $\ell < m \le k$}.
		\end{cases}
	\end{equation*}
	It is straightforward to see that $\vec t_{m+1} = B_k^T B_k (B_k^T B_k - \xi_m I)^{-1} \vec t_m$ for $1 \le m < \ell$, and that $\vec t_{m+1} = (B_k^T B_k - \xi_m I)^{-1} \vec t_m$ for $\ell \le m < k$.
	By Lemma~\ref{lemma:replace-Bk-with-A-rational}, we have
	\begin{equation*}
		\vec t_2 = B_k^T B_k (B_k^TB_k - \xi_1 I)^{-1} Q_k^T \vec b = Q_k^T A^TA (A^TA - \xi_1 I)^{-1} \vec b,
	\end{equation*}
	and hence $\vec t_2 = Q_k^T \vec b_2$, with $\vec b_2 \in \rat_2(A^TA, \vec b)$.

	By induction, assume that $\vec t_m = Q_k^T \vec b_m$, where $\vec b_m \in \rat_m(A^TA, \vec b)$. Then, if $m < \ell$, we have
	\begin{equation*}
		\vec t_{m+1} = B_k^TB_k(B_k^T B_k - \xi_m I)^{-1} Q_k^T \vec b_m = Q_k^T A^TA(A^TA - \xi_m I)^{-1}\vec b_m,
	\end{equation*}
	where we used \eqref{eqn:replace-Bk-with-A-2}. Defining $\vec b_{m+1} = A^TA(A^TA - \xi_m I)^{-1}\vec b_m$, we get that $\vec t_{m+1} = Q_k^T \vec b_{m+1}$, where $\vec b_{m+1} \in \rat_{m+1}(A^TA, \vec b)$. The case $\ell \le m < k$ is similar: by using \eqref{eqn:replace-Bk-with-A-1}, we find that $\vec t_{m+1} = Q_k^T \vec b_{m+1}$, with $\vec b_{m+1} = (A^TA - \xi_m I)^{-1} \vec b_m \in \rat_{m+1}(A^TA, \vec b)$.

	Considering $m = k$, by the above discussion we have obtained that
	\begin{align*}
		\vec t_k = Q_k^T \vec b_k & = Q_k^T (A^TA)^{\ell-1} \dprod_{j = 1}^{k-1}(A^TA - \xi_j I)^{-1} \vec b \\
		                          & = Q_k^T (A^TA)^{\ell-1} \tilde q_{k-1}(A^TA)^{-1} \vec b,
	\end{align*}
	with $\vec b_k = (A^TA)^{\ell-1} \tilde q_{k-1}(A^TA)^{-1} \vec b \in \rat_k(A^TA, \vec b)$. Therefore we have
	\begin{equation*}
		\barvec y_k = P_k B_k \vec t_k = P_k P_k^T A Q_k Q_k^T \vec b_k = A \vec b_k,
	\end{equation*}
	where we used $Q_kQ_k^T \vec b_k = \vec b_k$ and $P_kP_k^T A \vec b_k = A \vec b_k$. Hence we conclude that $\vec y = \barvec y_k$.
\end{proof}

We now have all the elements to prove Theorem~\ref{thm:rational-krylov-error-bound}. The proof follows the same strategy as the proof of Theorem~\ref{thm:polynomial-krylov-error-bound}.

\begin{proof}[Proof of Theorem~\ref{thm:rational-krylov-error-bound}]
	Let $p$ be a polynomial of degree $\le k-1$. Then the rational function $r(z) = \tilde q_{k-1}(z^2)^{-1}p(z^2) z$ satisfies the assumptions of Lemma~\ref{lemma:rational-krylov-exactness}, and hence we have
	\begin{equation}
		\label{eqn:proof-rational-bound-1}
		r^\diamond(A) \vec b = P_k r^\diamond(B_k) Q_k^T \vec b.
	\end{equation}
	By adding and subtracting the quantity in~\eqref{eqn:proof-rational-bound-1} to $f^\diamond(A) \vec b - \barvec y_k$, we get
	\begin{equation}
		\label{eqn:proof-rational-bound-2}
		f^\diamond(A) \vec b - \barvec y_k = [f^\diamond(A) - r^\diamond(A)] \vec b - P_k[f^\diamond(B_k) - r^\diamond(B_k)]Q_k^T \vec b.
	\end{equation}
	Since by Lemma~\ref{lemma:interlacing-singular-values} the nonzero singular values of $B_k$ are contained in the interval $[\sigma_n, \sigma_1]$, by invariance of the $2$-norm under unitary transformations, we have
	\begin{align*}
		\norm {f^\diamond(A) - r^\diamond(A) }_2     & \le \norm{f - r}_{\infty, [\sigma_{\min\{m,n\}}, \sigma_1]}, \\
		\norm {f^\diamond(B_k) - r^\diamond(B_k) }_2 & \le \norm{f - r}_{\infty, [\sigma_n, \sigma_1]}.
	\end{align*}
	Combining the above inequalities with \eqref{eqn:proof-rational-bound-2}, we get
	\begin{equation*}
		\norm{f^\diamond(A) \vec b - \barvec y_k}_2 \le 2 \norm{\vec b}_2 \norm{f - r}_{\infty, [\sigma_n, \sigma_1]},
	\end{equation*}
	and by taking the minimum over $p \in \poly_{k-1}$ we obtain \eqref{eqn:gmf-rational-bound-1}.

	To prove \eqref{eqn:gmf-rational-bound-2}, if $\sigma_n = \sigma_m > 0$ or $\displaystyle\lim_{z \to 0}\tfrac{f(z)}{z} = 0$, by Proposition~\ref{prop:gmf-general-equivalence} we can write $f^\diamond(A) = g(AA^T)A$ and $f^\diamond(B_k) = g(B_kB_k^T) B_k$, where $g(z) = f(\sqrt{z})/\sqrt{z}$.
	Thus, using also Corollary~\ref{cor:gmf-rational-evaluation}, we can rewrite \eqref{eqn:proof-rational-bound-2} in the form
	\begin{equation*}
		f^\diamond(A) \vec b - \barvec y_k = h(AA^T) A \vec b - P_k h(B_kB_k^T) B_k Q_k^T \vec b,
	\end{equation*}
	where $h(z) = g(z) - \tilde q_{k-1}(z)^{-1}p(z)$.

	Given that the eigenvalues of $B_k B_k^T$ are the squares of the singular values of $B_k$, using Lemma~\ref{lemma:interlacing-singular-values} and proceeding as above we obtain
	\begin{align*}
		\norm{f^\diamond(A) - \barvec y_k}_2 & \le \norm{A \vec b}_2 \left( \norm{h}_{\infty, [\sigma_m^2, \sigma_1^2]} + \norm{h}_{\infty, [\sigma_n^2, \sigma_1^2]} \right) \\
		                                     & = 2 \norm{A \vec b}_2 \norm*{f(z)/z - \tilde q_{k-1}(z^2)^{-1} p(z^2)}_{\infty, [\sigma_{\max\{m,n\}}, \sigma_1]}.
	\end{align*}
	As before, \eqref{eqn:gmf-rational-bound-2} follows by taking the minimum over $p \in \poly_{k-1}$.
\end{proof}

We mention that the results of this section can also be obtained by exploiting the link betweeen GMFs of $A$ and standard functions of the matrix $\mathcal{A} = \begin{bmatrix}
		0   & A \\
		A^T & 0
	\end{bmatrix}$.
Indeed, it was observed in \cite{arrigo2016computation} that for an odd function $f$ it holds
\begin{equation*}
	f(\mathcal{A}) = \begin{bmatrix}
		0               & f^\diamond(A) \\
		f^\diamond(A^T) & 0
	\end{bmatrix}.
\end{equation*}
If we define the orthogonal matrix $\mathcal{U}_{2k} = \begin{bmatrix}
		P_k & 0   \\
		0   & Q_k
	\end{bmatrix}$ and the vector $\vec c = \begin{bmatrix}
		0 \\
		\vec b
	\end{bmatrix} \in \R^{m+n}$, we have that $\mathcal{U}_{2k}^T \mathcal{A} \mathcal{U}_{2k} = \begin{bmatrix}
		0     & B_k \\
		B_k^T & 0
	\end{bmatrix}$ and
\begin{align*}
	f(\mathcal{A}) \vec c                                                                         & = \begin{bmatrix}
		f^\diamond(A) \vec b \\
		0
	\end{bmatrix}, \\
	\mathcal{U}_{2k} f(\mathcal{U}_{2k}^T \mathcal{A} \mathcal{U}_{2k}) \mathcal{U}_{2k}^T \vec c & = \begin{bmatrix}
		P_k f^\diamond(B_k) Q_k^T \vec b \\
		0
	\end{bmatrix}.
\end{align*}
Moreover, it can be proved that the columns of $\mathcal{U}_{2k}$ are an orthonormal basis for a rational Krylov subspace $\rat_{2k}(\mathcal{A}, \vec c)$, the poles of which consist of a single pole at $\infty$, and $\pm \theta_j$, $j = 1, \dots, k-1$, where $\theta_j^2 = \xi_j$ for each $j$. This fact is straightforward to prove in the polynomial case, where all $\theta_j$ are equal to $\infty$.

An alternative derivation of the error bounds in Theorem \ref{thm:polynomial-krylov-error-bound} and Theorem \ref{thm:rational-krylov-error-bound} could then be obtained by combining the above fact with Lemma \ref{lemma:interlacing-singular-values} and error bounds concerning rational Krylov approximation of standard matrix functions (see, e.g., \cite[Corollary~3.4]{Guettel13}).

\subsection{An optimal pole for the Shift-and-Invert method}
\label{subsec:optimal-pole-SI}

In this section we use the error bounds in Theorem~\ref{thm:rational-krylov-error-bound} combined with a known result from approximation theory to find a pole that optimizes the bounds in the case of a single repeated pole (Shift-and-Invert method) located on the negative real line.

We consider the case of a nonsingular square matrix $A \in \R^{n \times n}$, with singular values contained in the interval $[\sigma_{\min}, \sigma_{\max}]$, with $\sigma_{\min} > 0$.

Note that with a change of variables the bound \eqref{eqn:gmf-rational-bound-2} can be rewritten in the form
\begin{equation*}
	\norm{f^\diamond(A) \vec b - \barvec y_k}_2 \le 2 \norm{A \vec b}_2 \min_{p \in \poly_{k-1}} \norm{ g(z) - q_{k-1}(z)^{-1} p(z) }_{\infty, [\sigma_{\min}^2, \sigma_{\max}^2]},
\end{equation*}
where the function $g$ is defined as $g(z) = f(\sqrt{z})/\sqrt{z}$.

In the case of a single repeated pole $\xi < 0$, we have $q_{k-1}(z) = (z - \xi)^{k-1}$ and
\begin{equation*}
	\Big\{ (z - \xi)^{-k+1} p(z) \,:\, p \in \poly_{k-1} \Big\} = \Big\{ p((z-\xi)^{-1}) \,:\, p \in \poly_{k-1} \Big\}.
\end{equation*}
By defining $h(z) = g(z^{-1} + \xi)$, so that $g(z) = h((z - \xi)^{-1})$, we get
\begin{equation}
	\label{eqn:min-polynomial--SI-pole}
	\min_{p \in \poly_{k-1}} \norm{g(z) - (z - \xi)^{-k+1}p(z)}_{\infty, [\sigma_{\min}^2, \sigma_{\max}^2]} = \min_{p \in \poly_{k-1}} \norm{h(z) - p(z)}_{\infty, [\mu_\text{min}, \mu_\text{max}]},
\end{equation}
where $\mu_\text{min} := (\sigma_{\max}^2 - \xi)^{-1}$ and $\mu_\text{max} := (\sigma_{\min}^2 - \xi)^{-1}$. Notice that for $\xi < 0$ it holds indeed $0 < \mu_\text{min} \le \mu_\text{max} \le (-\xi)^{-1}$.

The minimum in \eqref{eqn:min-polynomial--SI-pole} can be bounded using the following result in approximation theory, adapted from \cite[Proposition~3.1]{MoretNovati19}. Its proof relies on classical bounds for Faber series, see~\cite[Corollary~2.2]{Ellacott83}.

\begin{proposition}
	\label{prop:polynomial-bound--SI-pole}
	Let $\xi < 0$, and assume that $h(z) = g(z^{-1} + \xi)$ is analytic in the strip $0 < \Real z < (-\xi)^{-1}$ and continuous in $[0, (-\xi)^{-1}]$. Then, for any integer $k \ge 1$ it holds
	\begin{equation}
		\label{eqn:SI-bound--SI-pole}
		\min_{p \in \poly_{k-1}} \norm{h(z) - p(z)}_{\infty, [\mu_{\min}, \mu_{\max}]} \le 2M \frac{\rho^k}{1-\rho},
	\end{equation}
	where $M = \norm{h(z)}_{\infty, [0, (-\xi)^{-1}]}$ and
	\begin{equation*}
		\rho = \max \left\{ \frac{\sqrt{\sigma_{\max}^2 - \xi} - \sqrt{\sigma_{\min}^2 - \xi}}{\sqrt{\sigma_{\max}^2 - \xi} + \sqrt{\sigma_{\min}^2 - \xi}}\, , \, \frac{\sigma_{\max} \sqrt{\sigma_{\min}^2 - \xi} - \sigma_{\min}\sqrt{\sigma_{\max}^2 - \xi}}{\sigma_{\max} \sqrt{\sigma_{\min}^2 - \xi} + \sigma_{\min}\sqrt{\sigma_{\max}^2 - \xi}} \right\}.
	\end{equation*}
\end{proposition}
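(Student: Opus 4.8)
The plan is to reduce the left-hand side of \eqref{eqn:SI-bound--SI-pole} to the classical Chebyshev truncation estimate on an interval, and then to identify, by an elementary geometric computation, the optimal Bernstein ellipse around $[\mu_{\min},\mu_{\max}]$. Write $\rho = \max\{\rho_1,\rho_2\}$, where $\rho_1$ and $\rho_2$ are the two fractions in the statement, and let $\mathcal{E}_r$ be the Bernstein ellipse with foci $\mu_{\min},\mu_{\max}$ and parameter $r>1$ (so $\mathcal{E}_1 = [\mu_{\min},\mu_{\max}]$). By the standard bound for truncated Chebyshev expansions (see e.g.~\cite[Theorem~8.2]{Trefethen13} or \cite[Corollary~2.2]{Ellacott83}), if $h$ is analytic in $\mathcal{E}_r$ and continuous on $\overline{\mathcal{E}_r}$ then
\begin{equation*}
	\min_{p \in \poly_{k-1}} \norm{h - p}_{\infty,[\mu_{\min},\mu_{\max}]} \le \frac{2}{r^{k-1}(r-1)}\,\sup_{\overline{\mathcal{E}_r}}\abs{h}.
\end{equation*}
If we can show that, for every $r < 1/\rho$, (i) $\overline{\mathcal{E}_r}$ lies in the strip $\{0 < \Real z < (-\xi)^{-1}\}$ and (ii) $\sup_{\overline{\mathcal{E}_r}}\abs{h} \le M$, then letting $r \uparrow 1/\rho$ gives precisely $\frac{2M}{(1/\rho)^{k-1}(1/\rho-1)} = \frac{2M\rho^{k}}{1-\rho}$, as claimed.

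For (i) I would compute the real endpoints of $\mathcal{E}_r$: since the ellipse has real foci, $z \mapsto \bigl|z - \tfrac12(-\xi)^{-1}\bigr|$ is maximized over $\overline{\mathcal{E}_r}$ at one of these two endpoints. Using $\mu_{\min} = (\sigma_{\max}^2-\xi)^{-1}$ and $\mu_{\max} = (\sigma_{\min}^2-\xi)^{-1}$, the left endpoint of $\mathcal{E}_r$ reaches $0$ exactly when $r = \tfrac{\sqrt{\mu_{\max}}+\sqrt{\mu_{\min}}}{\sqrt{\mu_{\max}}-\sqrt{\mu_{\min}}} = 1/\rho_1$, while the right endpoint reaches $(-\xi)^{-1}$ exactly when $r = 1/\rho_2$; here $\rho_2$ is the analogue of $\rho_1$ for the reflected interval $[(-\xi)^{-1}-\mu_{\max},\,(-\xi)^{-1}-\mu_{\min}]$. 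Hence $\overline{\mathcal{E}_r}$ is contained in the closed disc $D$ of diameter $[0,(-\xi)^{-1}]$ precisely when $r \le 1/\rho$, and it is the competition over which of the two boundary lines of the strip is reached first that makes $\rho$ a maximum of two quantities. Since the interior of $D$ lies inside the strip, $h$ is analytic on $\mathcal{E}_r$ for $r < 1/\rho$, which is (i).

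For (ii), let $\chi(z) = z^{-1}+\xi$; this Möbius map sends the interior of $D$ conformally onto the right half-plane $\{\Real u > 0\}$ and the diameter $[0,(-\xi)^{-1}]$ onto $[0,+\infty]$. Since $h = g\circ\chi$, this yields $\sup_{D}\abs{h} = \sup_{\Real u \ge 0}\abs{g(u)}$, and the identity $\sup_{\Real u \ge 0}\abs{g(u)} = \sup_{u \ge 0}\abs{g(u)} = M$ follows from the structure of $g$ — it holds, for instance, whenever $g$ is a Stieltjes function, which is also what makes $h$ bounded on $[0,(-\xi)^{-1}]$ — and is the analytic content borrowed from \cite[Proposition~3.1]{MoretNovati19}. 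Combining with $\overline{\mathcal{E}_r} \subset D$ for $r < 1/\rho$ gives $\sup_{\overline{\mathcal{E}_r}}\abs{h} \le M$, and the estimate follows as above.

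The delicate step is (ii): as $r \to 1/\rho$ the set $\overline{\mathcal{E}_r}$ becomes a genuinely two-dimensional region filling the strip, whereas $M$ only measures $h$ on the one-dimensional segment $[0,(-\xi)^{-1}]$, so $\sup_{\overline{\mathcal{E}_r}}\abs{h}\le M$ is false for a generic analytic $h$ and must be extracted from the special structure of $g$ (and hence of $f$). The remaining ingredient — recognizing the optimal Bernstein parameter $1/\rho$ as the smaller of the two radii $1/\rho_1$ and $1/\rho_2$ — is the part specific to this Shift-and-Invert setting.
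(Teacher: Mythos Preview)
Your approach is exactly what the paper intends by citing \cite[Proposition~3.1]{MoretNovati19} and \cite[Corollary~2.2]{Ellacott83}: a Chebyshev/Faber truncation bound on $[\mu_{\min},\mu_{\max}]$ governed by the largest Bernstein ellipse that fits in the domain of analyticity, together with the geometric identification of that ellipse. Your derivation of $\rho_1$ and $\rho_2$ from the real endpoints of $\mathcal{E}_r$ is correct and reproduces the two fractions in the statement.

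You are also right that step~(ii) is the nontrivial part and does \emph{not} follow from the hypotheses as literally stated in the proposition: with only analyticity in the strip and continuity on $[0,(-\xi)^{-1}]$, the inequality $\sup_{\overline{\mathcal{E}_r}}\abs{h}\le M$ is false in general (e.g.\ $h(z)=e^{iCz}$ with $C$ large gives $M=1$ but arbitrarily large values off the real axis). The bound with $M=\norm{h}_{\infty,[0,(-\xi)^{-1}]}$ is inherited from the setting of \cite{MoretNovati19}, where the additional structure of $g$ supplies it, and your sketch correctly isolates this as the borrowed ingredient.

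One small gap in the geometry: the parameters $r=1/\rho_1$ and $r=1/\rho_2$ are, as you compute, the values at which the \emph{real} endpoints of $\mathcal{E}_r$ reach $0$ and $(-\xi)^{-1}$. This gives $\overline{\mathcal{E}_r}\subset\{0\le\Real z\le(-\xi)^{-1}\}$ for $r\le 1/\rho$, which is what you need for~(i), but it does \emph{not} give $\overline{\mathcal{E}_r}\subset D$: an ellipse with real foci can extend outside the disc in the imaginary direction before its real vertices reach the strip boundary, so the maximum of $\abs{z-\tfrac12(-\xi)^{-1}}$ over $\overline{\mathcal{E}_r}$ need not be attained at a real point. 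Your M\"obius route to~(ii) therefore requires either a separate verification that $\mathcal{E}_r\subset D$ in this specific configuration, or a direct maximum-principle argument on the strip using the structure of $g$.
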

It follows from the analysis after \cite[Proposition~3.1]{MoretNovati19} that the bound \eqref{eqn:SI-bound--SI-pole} is optimized by choosing $\xi = -\sigma_{\min} \sigma_{\max}$. This choice leads to the following bound for the Shift-and-Invert iterates:
\begin{equation}
	\label{eqn:SI-bound-chosen-pole--SI-pole}
	\norm{\vec y - \barvec y_k}_2 \le 2 \norm{\vec b}_2 M \sqrt{\frac{\sigma_\text{max}}{\sigma_\text{min\phantom i\!}}} \exp\Big(-2k \sqrt \frac{\sigma_\text{min}}{\sigma_\text{max\phantom i\!}}\Big).
\end{equation}

We remark that the original result (see \cite[equation~(3.4)]{MoretNovati19}) exihibited an error like $\displaystyle\exp\big(-2k \sqrt[4]{\frac{a}{b}}\big)$ for a symmetric matrix $A$ with spectrum in $[a, b] \subset (0, +\infty)$, when using the Shift-and-Invert method with the optimal pole $\xi = -\sqrt{a b}$. The fact that in \eqref{eqn:SI-bound-chosen-pole--SI-pole} we have $\sqrt{\dfrac{\sigma_\text{min}}{\sigma_\text{max\phantom i\!}}}$ instead of $\sqrt[4]{\dfrac{\sigma_\text{min}}{\sigma_\text{max\phantom i\!}}}$ is not surprising, since we are essentially applying the result from \cite{MoretNovati19} to the matrix $A^TA$, whose spectrum is contained in $[\sigma_\text{min}^2, \sigma_\text{max\phantom i\!}^2]$.

\section{Numerical results}
\label{sec:numerical}

In this section we present some numerical experiments with the purpose of illustrating the error bounds and comparing the different methods proposed in the previous sections. We test the methods on randomly generated matrices with a prescribed distribution of singular values, obtained by taking two random orthogonal matrices $U, V \in \R^{n \times n}$ and constructing $A = U \Sigma V^T$, where $\Sigma = \diag(\sigma_1, \dots, \sigma_n) \in \R^{n \times n}$.
The random orthogonal matrices are obtained by taking a matrix $B \in \R^{n \times n}$ with entries from the normal distribution $\normal(0,1)$ and computing the QR factorization $B = QR$. If the diagonal entries of $R$ are nonnegative, then $Q$ is a random orthogonal matrix from the Haar distribution, a natural uniform probability distribution on the manifold of $n \times n$ orthogonal matrices \cite{Stewart80}.

The experiments were done with MATLAB, using the \texttt{rat\_krylov} function from the Rational Krylov Toolbox \cite{RKToolbox} for the implementation of the rational Arnoldi algorithm.
The plots display the relative 2-norm error $\norm{f^\diamond(A) \vec b - \barvec y_k}_2$, where $\barvec y_k$ is the approximation defined in \eqref{eqn:gmf-approx-polynomial} or \eqref{eqn:gmf-approx-rational}, depending on the Krylov method that was used.

\subsection{Error bounds}

We start by illustrating in Figure~\ref{fig:polynomial-convergence} the sharpness of the bound \eqref{eqn:gmf-polynomial-bound-chui-hasson} for the polynomial Krylov method. Under the assumptions of Theorem~\ref{thm:chui-hasson-quantitative}, the rate of convergence in the bound only depends on the interval $[\sigma_n, \sigma_1]$, and hence we can expect it to be pessimistic for most functions.
Indeed, for entire functions such as $\sinh(z)$ and $\sin(z)$ (see Figure~\ref{fig:polynomial-convergence}(b)) the convergence is much faster than the bound~\eqref{eqn:gmf-polynomial-bound-chui-hasson}; however, the bound can capture the asymptotic rate of convergence for certain functions with lower regularity such as $\sqrt{z}$, for suitable singular value distributions (see Figure~\ref{fig:polynomial-convergence}(a)). This implies that, under the same assumptions of Theorem~\ref{thm:chui-hasson-quantitative}, it is only possible to improve the multiplicative constant in the bound~\eqref{eqn:gmf-polynomial-bound-chui-hasson}. Note that in Figure~\ref{fig:polynomial-convergence} the multiplicative constant in the bound~\eqref{eqn:gmf-polynomial-bound-chui-hasson} was ignored for better visualization.

\begin{figure}
	\makebox[\linewidth][c]{
			\begin{subfigure}[t]{.6\textwidth}
		\begin{tikzpicture}
			\begin{semilogyaxis}[
				title = {},  
				xlabel = {Iteration},
				ylabel = {Error},
				x tick label style={/pgf/number format/.cd,%
					scaled x ticks = false,
					set thousands separator={},
					fixed},
				legend pos=north east,
				ymajorgrids=true,
				grid style=dashed,legend style={font=\tiny}]
				\addplot[color=black, mark size=1pt, dashed] table {polybound.dat};
				\addlegendentry{{Bound~(6.10)}}
				
				\addplot[color=blue, forget plot] table {polyerr_sqrtzlogz.dat};
				\addplot[only marks, color=blue,mark=*, mark size=1pt, each nth point=10, forget plot] table {polyerr_sqrtzlogz.dat};
				\addplot[color=blue,mark=*, mark size=1pt, each nth point=1000] table {polyerr_sqrtzlogz.dat};
				\addlegendentry{{$\sqrt{z}\log(z)$}}
				
				\addplot[color=red, forget plot] table {polyerr_z_m025.dat};
				\addplot[only marks, color=red,mark=square*, mark size=1pt, each nth point = 10, forget plot] table {polyerr_z_m025.dat};
				\addplot[color=red,mark=square*, mark size=1pt, each nth point = 1000] table {polyerr_z_m025.dat};
				\addlegendentry{{$z^{-1/4}$}}
				
				\addplot[color=green, forget plot] table {polyerr_sqrtz.dat};
				\addplot[only marks, color=green, mark=triangle*, mark size=1pt, each nth point = 10, forget plot] table {polyerr_sqrtz.dat};
				\addplot[color=green, mark=triangle*, mark size=1pt, each nth point = 1000] table {polyerr_sqrtz.dat};
				\addlegendentry{{$\sqrt{z}$}}
			\end{semilogyaxis}
	\end{tikzpicture}
		\caption{}
\end{subfigure}
		\begin{subfigure}[t]{.6\textwidth}
	\begin{tikzpicture}
			\begin{semilogyaxis}[
				title = {},  
				xlabel = {Iteration},
				ylabel = {Error},
				legend pos=north east,
				ymajorgrids=true,
				grid style=dashed,legend style={font=\tiny}]
				\addplot[color=blue,mark=*, mark size=1pt] table {polyerr_sinh.dat};
				\addlegendentry{{$\sinh(z)$}}
				\addplot[color=red,mark=square*, mark size=1pt] table {polyerr_sin.dat};
				\addlegendentry{{$\sin(z)$}}
			\end{semilogyaxis}
	\end{tikzpicture}
\caption{}
\end{subfigure}}
\caption{Convergence of the polynomial Krylov method for the approximation of $f^\diamond(A) \vec b$, where $A$ is a $2000 \times 2000$ matrix whose singular values are Chebyshev points of the second kind for the interval $[10^{-1}, 10]$, and $\vec b$ is a random vector. Left: functions with an asymptotic convergence rate predicted by the bound~\eqref{eqn:gmf-polynomial-bound-chui-hasson}. Right: entire functions with fast convergence.}
\label{fig:polynomial-convergence}%
\end{figure}
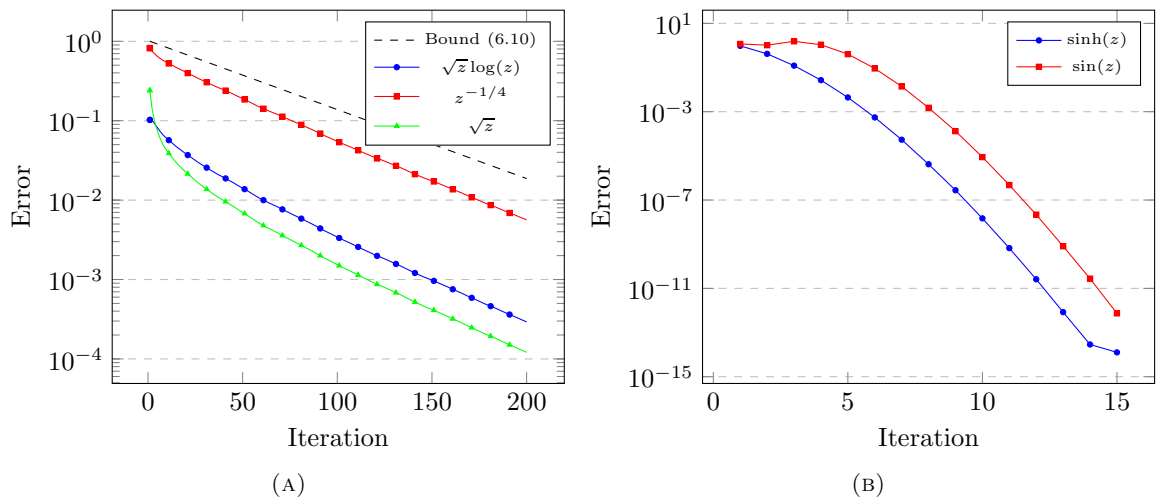

In Figure~\ref{fig:rational-convergence} we compare the convergence of the rational Krylov methods and we test the sharpness of the bounds~\eqref{eqn:gmf-rational-bound--2-intervals} and~\eqref{eqn:SI-bound-chosen-pole--SI-pole}. We use the Shift-and-Invert method with the pole $\xi = - \sigma_\text{min} \sigma_\text{max}$, the extended Krylov method \cite{DruskinKnizhnerman98}, that alternates poles at $\infty$ with poles at $0$, and a general Krylov method with an asymptotically optimal pole sequence for Laplace-Stieltjes functions, developed in \cite{MasseiRobol20}.
The poles were selected using the interval $[\sigma_n^2, \sigma_1^2]$, with reference to the bound~\eqref{eqn:gmf-rational-bound--practical}. The function $f(z) = \sqrt{z}\log(1 + \sqrt{z})$ is such that the function
\begin{equation*}
\frac{f(\sqrt{z})}{\sqrt{z}} = \dfrac{\log(1 + \sqrt[4]{z})}{\sqrt[4]{z}}
\end{equation*}
is Laplace-Stieltjes, or equivalently, completely monotonic~\cite[Definition~1.3]{SSV-BernsteinFunctions}. This follows from~\cite[Theorem~3.7]{SSV-BernsteinFunctions} and the fact that $\log(1+z)/z$ is completely monotonic.
An approximation from above to the bound~\eqref{eqn:gmf-rational-bound--2-intervals} was evaluated using a quasi-optimal polynomial $p$ computed by replacing the uniform norm with the 2-norm on a discrete set of points in $I \cup (-I)$.
We can see in Figure~\ref{fig:rational-convergence} that the convergence of the rational Krylov method with asymptotically optimal poles closely follows the bound~\eqref{eqn:gmf-rational-bound--2-intervals}, and that the convergence rate of the Shift-and-Invert method is correctly predicted by the bound~\eqref{eqn:SI-bound-chosen-pole--SI-pole}. The convergence speed of the extended Krylov method is comparable to the one of the Shift-and-Invert method. Note that, as in the polynomial case, the bound~\eqref{eqn:SI-bound-chosen-pole--SI-pole} displayed in Figure~\ref{fig:rational-convergence} does not include the multiplicative constant.

\begin{figure}
	\makebox[\linewidth][c]{
			\begin{subfigure}[t]{.6\textwidth}
		\begin{tikzpicture}
			\begin{semilogyaxis}[
				title = {},  
				xlabel = {Iteration},
				ylabel = {Error},
				x tick label style={/pgf/number format/.cd,%
					scaled x ticks = false,
					set thousands separator={},
					fixed},
				legend pos=south west,
				ymajorgrids=true,
				grid style=dashed,legend style={font=\tiny}]
		
				\addplot[color=blue, forget plot] table {EDS_2.dat};
				\addplot[color=blue, mark=*, mark size=1pt] table {EDS_2.dat};
				\addlegendentry{{Opt. Poles}}

				\addplot[color=blue, mark size=1pt, dashed] table {ratbound_2.dat};
				\addlegendentry{{Bound (6.13)}}
								
				\addplot[color=red, forget plot] table {SI_2.dat};
				\addplot[color=red, mark=square*, mark size=1pt] table {SI_2.dat};
				\addlegendentry{{S\&I}}

				\addplot[color=red, mark size=1pt, dashed] table {SIbound_2.dat};
				\addlegendentry{{Bound (6.21)}}

				\addplot[color=green, forget plot] table {ext_2.dat};
				\addplot[color=green, mark=triangle*, mark size=1pt] table {ext_2.dat};
				\addlegendentry{{Extended}}
				
			\end{semilogyaxis}
	\end{tikzpicture}
		\caption{}
\end{subfigure}
		\begin{subfigure}[t]{.6\textwidth}
	\begin{tikzpicture}
			\begin{semilogyaxis}[
				title = {},  
				xlabel = {Iteration},
				ylabel = {Error},
				legend pos=south west,
				ymajorgrids=true,
				grid style=dashed,legend style={font=\tiny}]

				\addplot[color=blue, forget plot] table {EDS_1.dat};
				\addplot[color=blue, mark=*, mark size=1pt] table {EDS_1.dat};
				\addlegendentry{{Optimal}}

				\addplot[color=blue, mark size=1pt, dashed] table {ratbound_1.dat};
				\addlegendentry{{Bound (6.13)}}
								
				\addplot[color=red, forget plot] table {SI_1.dat};
				\addplot[color=red, mark=square*, mark size=1pt] table {SI_1.dat};
				\addlegendentry{{S\&I}}

				\addplot[color=red, mark size=1pt, dashed] table {SIbound_1.dat};
				\addlegendentry{{Bound (6.21)}}

				\addplot[color=green, forget plot] table {ext_1.dat};
				\addplot[color=green, mark=triangle*, mark size=1pt] table {ext_1.dat};
				\addlegendentry{{Extended}}

			\end{semilogyaxis}
	\end{tikzpicture}
\caption{}
\end{subfigure}}
\caption{Convergence of different rational Krylov methods for the
approximation of $f^\diamond(A) \vec b$, where $A$ is a $2000 \times 2000$ matrix with logspaced singular values in the interval $[10^{-1}, 10]$ (left) or $[1, 10]$ (right), $f(z) = \sqrt{z} \log(1 + \sqrt{z})$, and $\vec b$ is a random vector.}
\label{fig:rational-convergence}
\end{figure}
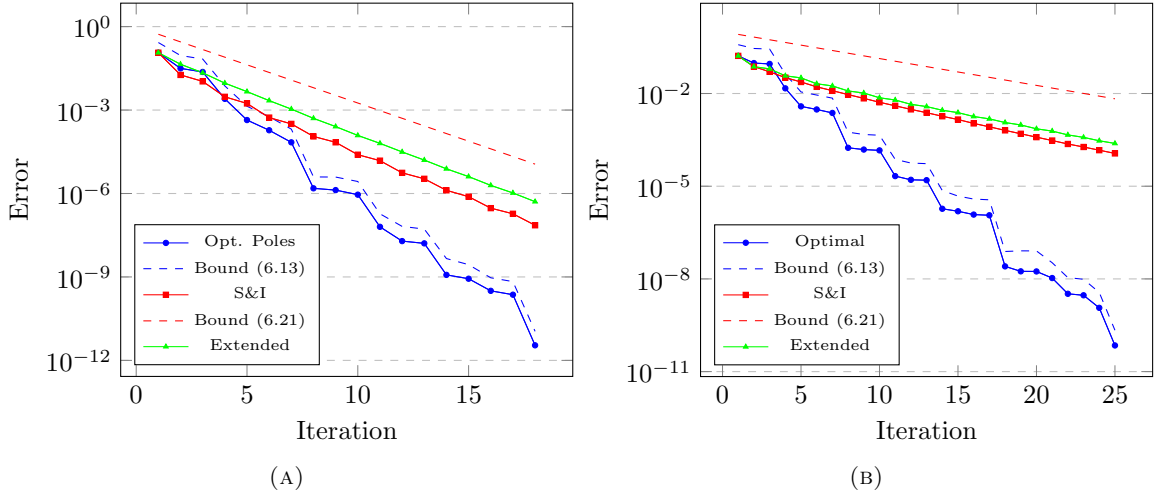

\subsection{Rectangular case}

Next, we investigate the performance of the methods in the case of a rectangular matrix $A \in \R^{m \times n}$, and the effectiveness of the strategy proposed in Remark~\ref{rem:rectang-discuss-post-poly-bound} when $m < n$ to reduce the computation of $f^\diamond(A) \vec b$ to the computation of $f^\diamond(A^T) A \vec b$. We report in Figure~\ref{fig:transp-convergence} the convergence plots of the rational Krylov method with asymtotically optimal poles, for the functions $f(z) = \sqrt{z}$ and $f(z) = z\log(z)$. We can observe that the convergence is similar for the function $z \log(z)$ (Figure~\ref{fig:transp-convergence}(b)), while there is a large benefit in using the alternative expression~\eqref{eqn:gmf-approx-transp} in the case of the function $f(z) = \sqrt{z}$. This is likely due to the fact that $\sqrt{z}$ has a large derivative close to zero, and hence roundoff errors in the smallest computed singular values of the matrix $B_k$ are extremely amplified when applying the function $f$. Indeed, we can see in Figure~\ref{fig:transp-convergence}(a) that it is not possible to get below a relative accuracy of $10^{-8}$ if we directly approximate $f^\diamond(A) \vec b$, while we can reach a relative error of about $10^{-13}$ if we use the connection with $f^\diamond(A^T) A \vec b$, since in this case the projected matrix $B_k$ has no singular values close to zero.

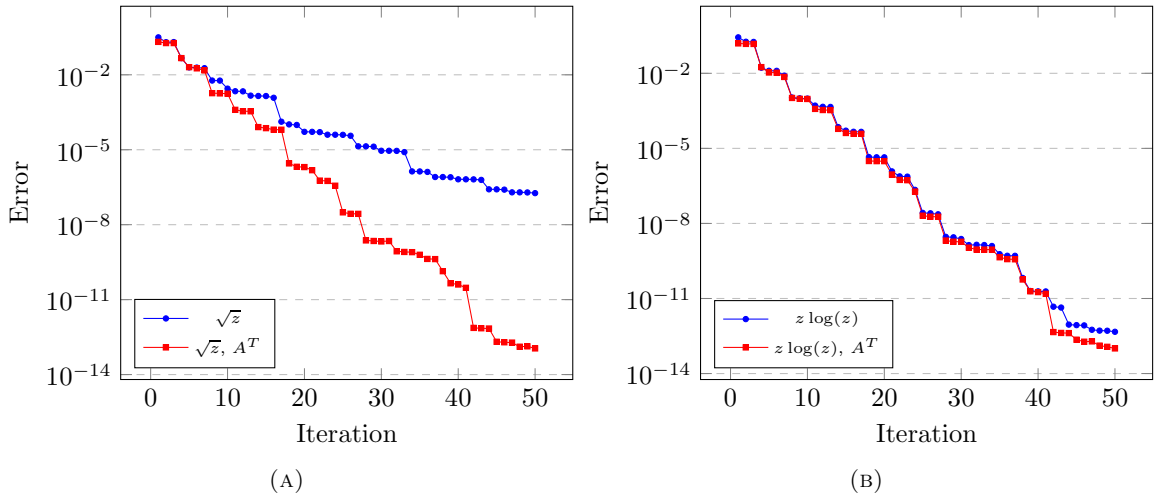
\begin{figure}
	\makebox[\linewidth][c]{
			\begin{subfigure}[t]{.6\textwidth}
		\begin{tikzpicture}
			\begin{semilogyaxis}[
				title = {},  
				xlabel = {Iteration},
				ylabel = {Error},
				x tick label style={/pgf/number format/.cd,%
					scaled x ticks = false,
					set thousands separator={},
					fixed},
				legend pos=south west,
				ymajorgrids=true,
				grid style=dashed,legend style={font=\tiny}]
		
				\addplot[color=blue, mark=*, mark size=1pt] table {rect_sqrtz.dat};
				\addlegendentry{{$\sqrt{z}$}}

				\addplot[color=red, mark=square*, mark size=1pt] table {rect_sqrtz_transp.dat};
				\addlegendentry{{$\sqrt{z}$, $A^T$}}
				
			\end{semilogyaxis}
	\end{tikzpicture}
		\caption{}
\end{subfigure}
		\begin{subfigure}[t]{.6\textwidth}
	\begin{tikzpicture}
			\begin{semilogyaxis}[
				title = {},  
				xlabel = {Iteration},
				ylabel = {Error},
				legend pos=south west,
				ymajorgrids=true,
				grid style=dashed,legend style={font=\tiny}]

				\addplot[color=blue, mark=*, mark size=1pt] table {rect_zlogz.dat};
				\addlegendentry{{$z \log(z)$}}

				\addplot[color=red, mark=square*, mark size=1pt] table {rect_zlogz_transp.dat};
				\addlegendentry{{$z \log(z)$, $A^T$}}

			\end{semilogyaxis}
	\end{tikzpicture}
\caption{}
\end{subfigure}}
\caption{Convergence of the rational Krylov methods with asymptotically optimal poles for
the approximation of $f^\diamond(A) \vec b$, where $A$ is a rectangular $1000 \times 1500$ matrix whose singular values are Chebyshev points of the second kind for the interval $[10^{-2}, 10]$.
The red line shows the convergence of the method described in Remark~\ref{rem:rectang-discuss-post-poly-bound}, which computes $f^\diamond(A) \vec b$ by first computing $f^\diamond(A^T) A \vec b$ and then solving a least squares problem.}
\label{fig:transp-convergence}
\end{figure}

\subsection{Finite precision issues}
In finite precision, one of the main practical problems of the Krylov methods based on a short recurrence (such as, for instance, the Lanczos method) is the loss of orthogonality in the computed basis vector. This phenomenon has been studied for the polynomial Lanczos case in \cite{paige1976error}. A brief study of the problem for the rational Lanczos case can be found in \cite{PPS21}.

As can be expected, the algorithm presented in Section~\ref{section:Rat-Golub_Kahan} also suffers from this numerical instability. However, our experiments show that this loss of orthogonality deteriorates only slightly the accuracy of the algorithm: if the poles are chosen to guarantee a moderate number of iterations for convergence, it appears that the error produced by comparing the short recurrence algorithm with the one that uses full ortogonalization remains rather small, and it stops growing after a few iterations (see Figure \ref{fig:lossOrthogonality}). This effect has been already studied in \cite{musco2018stability} for the approximation of the product between a standard matrix function and a vector by means of the polynomial Lanczos algorithm.

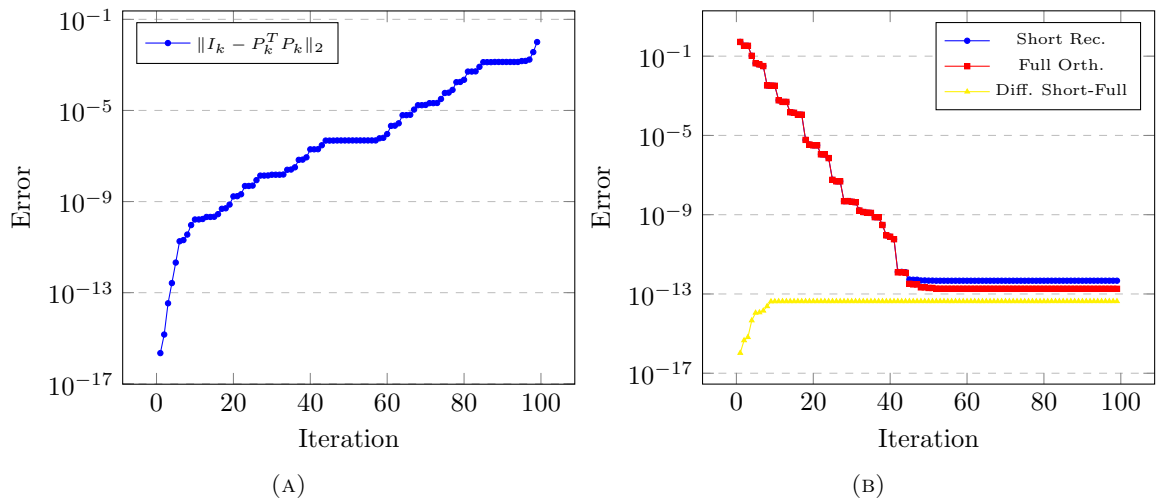
\begin{figure}
	\makebox[\linewidth][c]{
			\begin{subfigure}[t]{.6\textwidth}
		\begin{tikzpicture}
			\begin{semilogyaxis}[
				title = {},  
				xlabel = {Iteration},
				ylabel = {Error},
				x tick label style={/pgf/number format/.cd,%
					scaled x ticks = false,
					set thousands separator={},
					fixed},
				legend pos=north west,
				ymajorgrids=true,
				grid style=dashed,legend style={font=\tiny}]
				\addplot[color=blue,mark=*, mark size=1pt] table {lossOrth.dat};
				\addlegendentry{{$\norm{I_k-P_k^TP_k}_2$}}
			\end{semilogyaxis}
	\end{tikzpicture}
		\caption{}
\end{subfigure}
		\begin{subfigure}[t]{.6\textwidth}
	\begin{tikzpicture}
			\begin{semilogyaxis}[
				title = {},  
				xlabel = {Iteration},
				ylabel = {Error},
				legend pos=north east,
				ymajorgrids=true,
				grid style=dashed,legend style={font=\tiny}]
				\addplot[color=blue,mark=*, mark size=1pt] table {ErrShortRec.dat};
				\addlegendentry{{Short Rec.}}
				\addplot[color=red,mark=square*, mark size=1pt] table {ErrFullOrth.dat};
				\addlegendentry{{Full Orth.}}
				\addplot[color=yellow,mark=triangle*, mark size=1pt] table {diffFullShort.dat};
				\addlegendentry{{Diff. Short-Full}}
			\end{semilogyaxis}
	\end{tikzpicture}
\caption{}
\end{subfigure}}
\caption{Effects of the loss of orthogonality in the rational Golub-Kahan algorithm for the approximation of $f^\diamond(A) \vec b$, where $f(z)=\sqrt{z}$ and $A$ is a $2000 \times 2000$ matrix with logspaced singular values in the interval $[10^{-1}, 10^2]$, for the rational Krylov method with asymptotically optimal poles.
Left: loss of orthogonality when using the short recurrence. Right: comparision of the error in the approximation of $f^\diamond(A) \vec b$ when using the short recurrence or full orthogonalization of the basis vectors. In yellow we reported the norm of the difference between the two approximations.} 
\label{fig:lossOrthogonality}
\end{figure}

\section{Conclusions}
\label{sec:conclusions}
In this paper we have proposed the use of rational Krylov methods in the computation of the action of a generalized matrix function on a vector. We have developed an extension of the Golub-Kahan bidiagonalization to the rational case, that uses a short recurrence to compute the basis vectors of the rational Krylov subspace. We have proved error bounds for the computation of GMFs with polynomial and rational Krylov methods, that relate the error of approximating $f^\diamond(A)\vec b$ with the best uniform polynomial or rational approximation of the function $f$ on a real interval containing the singular values of $A$, and we have conducted experiments to investigate the sharpness of such bounds. The experiments we performed also show that rational Krylov methods are particularly effective compared to polynomial Krylov methods when the function $f$ or its derivatives have singularities close to the singular values of $A$.

\section*{Acknowledgements} The authors would like to thank Michele Benzi for his support and advice.

\FloatBarrier

\bibliographystyle{amsplain}
\bibliography{biblio_gmf}

\end{document}